\DeclareRobustCommand{\SkipTocEntry}[5]{} 
\newcommand{\C}{\mathbb{C}}
\newcommand{\N}{\mathbb{N}}
\newcommand{\R}{\mathbb{R}}
\newcommand{\cb}{\mathcal{B}}
\newcommand{\cd}{\mathcal{D}}
\newcommand{\ce}{\mathcal{E}}
\newcommand{\cf}{\mathcal{F}}
\newcommand{\cg}{\mathcal{G}}
\newcommand{\ch}{\mathcal{H}}
\newcommand{\ck}{\mathcal{K}}
\newcommand{\cs}{\mathcal{S}}
\newcommand{\cu}{\mathcal{U}}
\def\rond{\mathscr}
\newcommand{\rb}{\rond{B}}
\def\d{\mathrm{d}}
\def\rme{\mathrm{e}}
\def\i{\mathrm{i}}
\def\jap#1{\langle {#1} \rangle}
\DeclareMathOperator*{\w*lim}{w*-lim}
\def\qed{\hfill $\Box$\medskip}
\newtheorem{theorem}{Theorem}[section]
\newtheorem{lemma}[theorem]{Lemma}
\newtheorem{proposition}[theorem]{Proposition}
\newtheorem{corollary}[theorem]{Corollary}
\newtheorem{definition}[theorem]{Definition}
\numberwithin{equation}{section}
\begin{document}

\title[A new class of Schr\"odinger operators without positive eigenvalues]{A new class of Schr\"odinger operators without positive eigenvalues
  \\[2mm]
  {\tiny \today}
}

\author[A. Martin]{Alexandre Martin} \address{A. Martin,
  D\'epartement de Math\'ematiques, Universit\'e de Cergy-Pontoise,
  95000 Cergy-Pontoise, France}
\email{alexandre.martin@u-cergy.fr }

\begin{abstract}
Following the proof given by Froese and Herbst in \cite{FH} with another conjugate operator, we show for a class of real potential that possible eigenfunction of the Schr\"odinger operator has to decay sub-exponentially. We also show that, for a certain class of potential, this bound can not be satisfied which implies the absence of strictly positive eigenvalues for the Schr\"odinger operator.
\end{abstract}

\maketitle
\tableofcontents 

 \section{Introduction} \label{s:Intro}

In this article, we will study the Schr\"odinger operator $H=\Delta+V$ with a real potential, on $L^2(\R^\nu)$, where $\Delta$ is the non negative Laplacian operator. Here $V$ is a multiplication operator, i.e. $V$ can be the operator of multiplication by a real function or by a distribution of strictly positive order. When $V=0$, we know that $H=\Delta$ has a purely absolutly continuous spectrum on $[0,+\infty)$ with no embedded eigenvalues. We will try to see what happened if we add to $\Delta$ a "small" potential $V$, which is compact with respect to $\Delta$. In this case, $H$ is a compact perturbation of $\Delta$ and we already know that the essential spectrum of $H$ is $[0,+\infty)$.

An argument of quantum mechanics can make us believe that our Hamiltonians has no strictly positive eigenvalues, when $V$ is $\Delta$-compact or compact on $\ch^1$, the first order Sobolev space, to $\ch^{-1}$, its dual space.
This argument is reinforced by a result of S. Agmon \cite{Ag}, T. Kato \cite{Ka1}, R. Lavine \cite[Theorem XIII.29]{RS4}  and B. Simon \cite{Si1}. They proved the absence of positive eigenvalues for the operator $H=\Delta+V$ if the potential is a sum of a short range potential and a long range potential, i.e. $V$ can be written $V=V_1+V_2$ with 
\[
\begin{cases}
\lim\limits_{|x|\rightarrow +\infty} |x|V_1(x)=0 \\
\lim\limits_{|x|\rightarrow +\infty}V_2(x)=0\\
\lim\limits_{|x|\rightarrow+\infty} x\cdot\nabla V_2(x)\leq 0. 
\end{cases}\]
Similarly, L. H\"ormander \cite[Theorem 14.7.2]{Ho} proved that a possible eigenvector of $H$, associated to a positive eigenvalue, and its first order derivatives cannot have unlimited polynomial bounds if $|x|V$ is bounded. A.D. Ionescu and D. Jerison \cite{IJ} proved also this absence of positive eigenvalues for the 1-body Schr\"odinger operator, for a class of potentials with low regularity ($V\in L^{\nu/2}_{loc}$ if $\nu\geq3$, $V\in L^{r}_{loc}$, $r>1$ if $\nu=2$).

R. Froese, I. Herbst , M. Hoffman-Ostenhof and T. Hoffman-Ostenhof (\cite{FH} and \\
\cite{FHHH}) proved a similar result, concerning the N-body Schr\"odinger operator. We will explain below their result for the 1-body Schr\"odinger operator and we will generalize their proof to obtain larger conditions on the potential. More recently, using a similar proof than in \cite{FH}, two other results were proved. T. Jecko and A. Mbarek \cite{JMb} proved the absence of positive eigenvalues for $H=\Delta+V$  where $V$ is the sum of a short range potential, a long range potential and an oscillating potential which are not covered by the previous results. In the case of the discrete Schr\"odinger operator, M.A. Mandich \cite{Man} proved that under certain assumption on the potential, eigenfunctions decays sub-exponentially and that implies the absence of eigenvalues on a certain subset of the real axis.  This three proofs use the generator of dilations $A_D$, or the discrete generator of dilations in \cite{Man}, as conjugate operator. In our case, the continuous case, the generator of dilations has the following expression  
\[A_D=\frac{1}{2}(p\cdot q+q\cdot p),\]
where $q$ is the multiplication operator by $x$ and $p=-i\nabla$ is the derivative operator with $p^2=\Delta$. 

On the other hand, it is well known that we can construct a potential such that $H$ has positive eigenvalues.
For example, in one dimension, the Wigner-von Neuman potential $W(x)=w\sin(k|x|)/|x|$ with $k>0$ and $w\in \R^\varstar$ has a positive eigenvalue equal to $k^2/4$ (see \cite{NW}). Moreover, B. Simon proved in \cite{Si2} that for all sequence $(\ck_n)_{n=1\cdots +\infty}$ of distinct positive reals, we can construct a potential $V$ such that $(\ck_n^2)_{n=1\cdots +\infty}$ are eigenvalues of $H$. Moreover, B. Simon showed that if $\sum_{n=1}^\infty \ck_n<\infty$, then $|q|V$ is bounded, which implies that $V$ is $\Delta$-compact.

In their article \cite{FH}, R. Froese and I. Herbst proved the following
\begin{theorem}[\cite{FH}, Theorem 2.1]\label{th:FH}
Let $H=\Delta+V$ with $V$ a real-valued measurable function.
Suppose that
\begin{enumerate}
\item $V$ is $\Delta$-bounded with bound less than one,

\smallskip

\item $(\Delta+1)^{-1} q\cdot\nabla V(\Delta+1)^{-1}$ is bounded.
\end{enumerate}
Suppose that $H\psi=E\psi$. Then
 \begin{equation}
S_E=\sup\biggl\{\alpha^2+E\; ; \;\alpha>0, \exp(\alpha |x|)\psi\in L^2(\R^\nu)\biggr\}
\end{equation}
is $+\infty$ or the Mourre estimate is not valid at this energy with $A_D$ as conjugate operator.
\end{theorem}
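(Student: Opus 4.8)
The plan is to argue by contradiction, following the Froese--Herbst scheme with $A_D$ as conjugate operator. Assume $S_E<+\infty$ \emph{and} that the Mourre estimate holds at $S_E$ with conjugate operator $A_D$; I will derive a contradiction. Put $\alpha_0:=(S_E-E)^{1/2}\in[0,+\infty)$ and $I:=\{\alpha\ge 0:\exp(\alpha|x|)\psi\in L^2(\R^\nu)\}$, an interval with right endpoint $\alpha_0$. For $\alpha\in I$ set $\psi_\alpha:=\exp(\alpha|x|)\psi$ and $\hat\psi_\alpha:=\psi_\alpha/\|\psi_\alpha\|$. To keep the manipulations legitimate one first works with a smooth bounded approximant $F_\varepsilon$ of $|x|$ with $|\nabla F_\varepsilon|\le1$, $F_\varepsilon\nearrow|x|$ and $\nabla F_\varepsilon\to\omega:=x/|x|$, so that $\exp(\alpha F_\varepsilon)$ is bounded with bounded inverse; hypothesis (1) then makes the form domain of $H$ stable under this conjugation, and hypothesis (2) is what gives a meaning to the commutator $[H,iA_D]$. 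One computes $\exp(\alpha F_\varepsilon)\,H\,\exp(-\alpha F_\varepsilon)=H+i\alpha(p\cdot\nabla F_\varepsilon+\nabla F_\varepsilon\cdot p)-\alpha^2|\nabla F_\varepsilon|^2$, and, letting $\varepsilon\to0$, obtains that $\psi_\alpha$ solves $H_\alpha\psi_\alpha=E\psi_\alpha$ with $H_\alpha:=H+2i\alpha D_\omega-\alpha^2$, where $D_\omega:=\tfrac12(\omega\cdot p+p\cdot\omega)$ is essentially the radial momentum.

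Taking the real part of $\langle\psi_\alpha,(H_\alpha-E)\psi_\alpha\rangle=0$ gives $\|p\hat\psi_\alpha\|^2=\alpha^2+E-\langle\hat\psi_\alpha,V\hat\psi_\alpha\rangle$, so by hypothesis (1) (the relative bound being $<1$) the family $\{\hat\psi_\alpha\}_{0\le\alpha<\alpha_0}$ is bounded in $\ch^1$ --- here $\alpha_0<\infty$ is used --- and moreover $\langle\hat\psi_\alpha,H\hat\psi_\alpha\rangle=\alpha^2+E$ while $\langle\hat\psi_\alpha,H^2\hat\psi_\alpha\rangle-(\alpha^2+E)^2=4\alpha^2\|D_\omega\hat\psi_\alpha\|^2$. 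Using the elementary relation $A_D=|x|\,D_\omega-\tfrac i2$ together with $H_\alpha\psi_\alpha=E\psi_\alpha$, one finds $\langle\hat\psi_\alpha,[H_\alpha,iA_D]\hat\psi_\alpha\rangle=-4\alpha\langle D_\omega\hat\psi_\alpha,A_D\hat\psi_\alpha\rangle$, and since $[H,iA_D]=[H_\alpha,iA_D]+2\alpha[D_\omega,A_D]$ with $[D_\omega,A_D]=-iD_\omega$, the imaginary terms cancel and one is left with the key identity
\[
\langle\hat\psi_\alpha,[H,iA_D]\hat\psi_\alpha\rangle\;=\;-\,4\alpha\,\big\|\,|x|^{1/2}D_\omega\hat\psi_\alpha\,\big\|^{2}\;\le\;0,\qquad\alpha\in I.
\]
This sign, forced by the dilation structure, is the crux: it cannot coexist with a Mourre lower bound once $\hat\psi_\alpha$ is shown to concentrate near $S_E$.

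It remains to let $\alpha\to\alpha_0$. If $\sup_{\alpha<\alpha_0}\|\psi_\alpha\|<\infty$, then $\alpha_0\in I$ by monotone convergence, and a Combes--Thomas/Agmon-type argument --- using the Mourre estimate at $S_E$ to bound the resolvent of the analytically dilated operator $H+2i\beta D_\omega$ near $E$ for small $\beta>0$ --- yields $\exp((\alpha_0+\beta)|x|)\psi\in L^2$ for some $\beta>0$, contradicting $\alpha_0=\sup I$ (for $\alpha_0=0$ this is the statement that a Mourre estimate at an eigenvalue forces exponential decay of the eigenfunction). Otherwise $\|\psi_\alpha\|\to\infty$, hence $\hat\psi_\alpha\to0$ in $L^2_{\mathrm{loc}}$ and, being $\ch^1$-bounded, $\hat\psi_\alpha\rightharpoonup0$ weakly in $L^2$: the mass of $\hat\psi_\alpha$ escapes to infinity. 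Since $[H,iA_D]$ is $H$-form-bounded by (2) and $\langle\hat\psi_\alpha,H\hat\psi_\alpha\rangle$ is bounded, the identity above forces $\big\|\,|x|^{1/2}D_\omega\hat\psi_\alpha\,\big\|$ to stay bounded; combining this with the escape of mass and interior elliptic estimates for $H_\alpha\hat\psi_\alpha=E\hat\psi_\alpha$ (which give $\hat\psi_\alpha\to0$ in $\ch^1_{\mathrm{loc}}$), a localisation argument gives $\|D_\omega\hat\psi_\alpha\|\to0$, whence $\langle\hat\psi_\alpha,H^2\hat\psi_\alpha\rangle-\langle\hat\psi_\alpha,H\hat\psi_\alpha\rangle^2\to0$: the family becomes spectrally concentrated at $S_E$. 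Choosing $\theta\in C_c^\infty(\R)$ equal to $1$ near $S_E$ with support in a Mourre interval, one gets $\theta(H)\hat\psi_\alpha=\hat\psi_\alpha+o(1)$ in $\ch^1$, so the Mourre estimate yields $\langle\hat\psi_\alpha,[H,iA_D]\hat\psi_\alpha\rangle\ge c-\langle\hat\psi_\alpha,K\hat\psi_\alpha\rangle-o(1)$, and since $K$ is compact and $\hat\psi_\alpha\rightharpoonup0$ the right-hand side tends to $c>0$, contradicting the displayed identity.

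The real difficulty is confined to the last paragraph: making rigorous, as $\alpha\to\alpha_0$, the passage ``mass escapes'' $\Rightarrow$ $\|D_\omega\hat\psi_\alpha\|\to0$ and spectral concentration at $S_E$ (this needs interior elliptic estimates together with careful bookkeeping of the non-self-adjoint term $2i\alpha D_\omega$), and carrying out the Combes--Thomas step in the case $\alpha_0\in I$. Everything upstream --- the conjugation formula, the two real-part identities and, above all, the commutator identity --- reduces, once the $\varepsilon$-regularisation and hypotheses (1)--(2) are in place, to a finite computation with essentially bounded operators.
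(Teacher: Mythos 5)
Your algebra is correct as far as it goes: the conjugation formula, the two real-part identities, and the key sign identity $\langle\hat\psi_\alpha,[H,iA_D]\hat\psi_\alpha\rangle=-4\alpha\,\|\,|x|^{1/2}D_\omega\hat\psi_\alpha\|^2$ are all right (the latter is exactly Froese--Herbst's identity, i.e.\ Lemma \ref{l:Commutator psiF} with $\lambda\equiv1$, specialised to $F=\alpha|x|$, the two $\alpha|x|^{-1}$ contributions cancelling), and your ``Case B'' (norm blow-up $\Rightarrow$ local vanishing, $\|D_\omega\hat\psi_\alpha\|\to0$, spectral concentration, weak vanishing kills the compact error, contradiction with the Mourre bound) is the same mechanism as in the proof of Theorem \ref{th:exp bound 1}. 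The genuine gap is the dichotomy on $\sup_{\alpha<\alpha_0}\|\psi_\alpha\|$ and, above all, your ``Case A''. There you assert that if $e^{\alpha_0|x|}\psi\in L^2$ and the Mourre estimate holds at $S_E=E+\alpha_0^2$, then a ``Combes--Thomas/Agmon-type argument'' upgrades this to $e^{(\alpha_0+\beta)|x|}\psi\in L^2$ for some $\beta>0$. That improvement step is essentially the theorem itself (and your parenthetical base case $\alpha_0=0$, ``a Mourre estimate at an eigenvalue forces exponential decay'', is precisely the nontrivial statement to be proved, not something that can be quoted). Moreover the route you indicate does not work: $S_E$ is embedded in the essential spectrum $[0,+\infty)$, so Combes--Thomas gives nothing there, and a Mourre estimate with a compact error $K_0$ does not yield weighted resolvent bounds near $S_E$ --- indeed under \eqref{eq:Mourre estimate} the energy $S_E$ may itself be an eigenvalue of $H$, which rules out any resolvent estimate of the kind you invoke. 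Note also that when no positive $\alpha$ is admissible ($\alpha_0=0$) your Case B is vacuous, so the whole burden falls on this unproved step.

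What is missing is exactly the device that Froese--Herbst (and the paper, in the proofs of Theorems \ref{th:exp bound 1} and \ref{th:exp bound 2}) use to avoid any such case distinction: the two-parameter family of weights as in \eqref{eq:deuxieme F}, $F=\alpha\langle x\rangle^{\beta}+\tau\ln\bigl(1+\gamma\langle x\rangle^{\beta}\tau^{-1}\bigr)$ (with $|x|$, $\beta=1$, in the original setting), where $\alpha<\alpha_1<\alpha+\gamma$ and $\tau\to\infty$. For each fixed $\tau$ the weight grows only like $\langle x\rangle^{\tau}e^{\alpha\langle x\rangle^\beta}$, so $\psi_F\in L^2$ (this is where the preliminary step, polynomial bounds under the Mourre estimate, is needed, in particular to start at $\alpha=0$); as $\tau\to\infty$ it increases to the inadmissible weight $e^{(\alpha+\gamma)\langle x\rangle^\beta}$, so $\|\psi_F\|\to\infty$ automatically and the normalized vectors go weakly to zero, while $\nabla F$ stays bounded and the states concentrate spectrally within $O(\gamma)$ of $E+\alpha^2$, inside the Mourre interval around $S_E$. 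In other words, the interpolating logarithmic term forces your ``Case B'' scenario to occur for admissible weights, with errors of size $O(\gamma)$ that are then beaten by the fixed Mourre constant $c_0$; no weighted resolvent estimate and no bootstrap at $\alpha_0$ are needed. Without this (or an equivalent substitute for Case A), your argument does not close.
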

From this result, they deduce the following
\begin{corollary}[\cite{FH}, Theorem 3.1]\label{c:FH}
Let $H=\Delta+V$ with $V$ a real-valued measurable function. Let $E>0$.
Suppose that
\begin{enumerate}
\item $V$ is $\Delta$-compact,

\smallskip

\item \label{itm: FH reg} $(\Delta+1)^{-1} q\cdot\nabla V(\Delta+1)^{-1}$ is compact,

\smallskip

\item for some $a<2$ and $b\in\R$, we have in the form sense
\begin{equation}\label{eq:FH}
q\cdot \nabla V\leq a\Delta+b.
\end{equation}
\end{enumerate}
Suppose that $H\psi=E\psi$.
Then $\psi=0$.
\end{corollary}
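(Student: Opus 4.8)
The plan is to deduce the corollary from Theorem~\ref{th:FH}. First note that its hypotheses are implied by those assumed here: a $\Delta$-compact operator is $\Delta$-bounded with relative bound $0<1$, so (1) of the corollary yields (1) of Theorem~\ref{th:FH}, and a compact operator is bounded, so (2) yields (2). Hence, if $H\psi=E\psi$, Theorem~\ref{th:FH} gives the dichotomy: either $S_E=+\infty$, or the Mourre estimate fails at the energy $S_E$ with $A_D$ as conjugate operator.

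The next step is to show that the Mourre estimate holds at \emph{every} $\lambda>0$, which excludes the second alternative since $S_E\ge E>0$. As a quadratic form one has $i[H,A_D]=2\Delta-q\cdot\nabla V=2H-2V-q\cdot\nabla V$, the term $q\cdot\nabla V$ being meaningful thanks to (2). Localizing with $E_I(H)$ for $I=(\lambda-\delta,\lambda+\delta)$, $0<\delta<\lambda$,
\[
E_I(H)\,i[H,A_D]\,E_I(H)=2E_I(H)HE_I(H)-E_I(H)(2V+q\cdot\nabla V)E_I(H)\ge 2(\lambda-\delta)E_I(H)+K ,
\]
where $K$ is compact because $V$ is $\Delta$-compact, $(\Delta+1)^{-1}(q\cdot\nabla V)(\Delta+1)^{-1}$ is compact, and $\Delta$ is $H$-bounded (so that $(\Delta+1)E_I(H)$ is bounded, which lets one insert $(\Delta+1)^{-1}(\Delta+1)$ around $q\cdot\nabla V$). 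Thus a strict Mourre estimate holds at $\lambda$ with positive constant $2(\lambda-\delta)$. Applying it at $S_E$ contradicts Theorem~\ref{th:FH} unless $S_E=+\infty$, so $\exp(\alpha|x|)\psi\in L^2(\R^\nu)$ for all $\alpha>0$: $\psi$ decays faster than any exponential. (If $\psi=0$ there is nothing to prove, so assume $\psi\neq0$.)

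The crux is to contradict this super-exponential decay using (3). Put $\psi_\alpha=\exp(\alpha|x|)\psi$; to justify the unbounded manipulations I would first replace $|x|$ by bounded smooth approximants $F_\varepsilon\uparrow|x|$ with $|\nabla F_\varepsilon|\le1$, work with $\exp(F_\varepsilon)$, and send $\varepsilon\to0$. Conjugating the eigenvalue equation gives $e^{\alpha|x|}He^{-\alpha|x|}\psi_\alpha=E\psi_\alpha$; taking the real part of the pairing with $\psi_\alpha$ yields $\langle\psi_\alpha,H\psi_\alpha\rangle=(E+\alpha^2)\|\psi_\alpha\|^2$, and with the $\Delta$-bound $<1$ of $V$ this forces $\langle\psi_\alpha,\Delta\psi_\alpha\rangle\ge c_1(E+\alpha^2)\|\psi_\alpha\|^2$ for $\alpha$ large, so the normalized kinetic energy grows like $\alpha^2$. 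A second, virial-type identity relates $\langle\psi_\alpha,i[H,A_D]\psi_\alpha\rangle$ to the remainders produced by the conjugation; inserting the operator inequality $i[H,A_D]=2\Delta-q\cdot\nabla V\ge(2-a)\Delta-b$ coming from (3), together with the kinetic-energy lower bound, one reaches a contradiction with $\psi_\alpha\not\equiv0$ as $\alpha\to+\infty$ — typically via a differential inequality in $\alpha$ for $\|\psi_\alpha\|$ (or for the normalized expectation of $A_D$) whose decisive coefficient is strictly positive precisely because $a<2$. This forces $\psi=0$.

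The main obstacle is this last step. Verifying the hypotheses of Theorem~\ref{th:FH} and the Mourre estimate at positive energies is routine. The difficulty lies in: the weight $\exp(\alpha|x|)$ being unbounded and $|x|$ singular at the origin, so every computation must be run with the regularized weight and survive $\varepsilon\to0$, demanding uniform control of $[p,F_\varepsilon]$; the assembly of the second virial identity, which means commuting $A_D$ through the conjugated Hamiltonian while tracking every remainder of order $\alpha$ and of order $1$; and, above all, turning the resulting estimates into a statement strong enough to make $\psi_\alpha$ vanish rather than merely be bounded — which is exactly where $a<2$ enters, being what makes $i[H,A_D]$ uniformly coercive modulo bounded terms.
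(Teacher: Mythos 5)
Your steps 1 and 2 are exactly the route of Froese--Herbst (and of this paper's generalization): deduce from Theorem \ref{th:FH} that either $S_E=+\infty$ or the Mourre estimate fails at $S_E$, then use hypotheses (1)--(2) to get the Mourre estimate with compact remainder at every positive energy, so that $\psi$ has unlimited (sub)exponential decay; the paper then disposes of such decay via Theorem \ref{th:abs 1}, of which hypothesis \eqref{eq:FH} is the special case $\delta=-a$, $\delta'=\sigma=0$. So the strategy is the right one, and the first two stages are essentially complete.

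The gap is in the third stage, which is the actual content of the corollary and which you only gesture at. You correctly record the lower bound: \eqref{eq:FH} plus $\Delta\geq\mu H-C_\mu$ and $(\psi_F,H\psi_F)=(\psi_F,((\nabla F)^2+E)\psi_F)$ give $(\psi_F,[H,iA_D]\psi_F)\gtrsim \mu(2-a)\,\alpha^2\|\psi_F\|^2$ up to $O(\alpha)\|\psi_F\|^2$ terms. But you never produce the matching \emph{upper} bound, and "one reaches a contradiction, typically via a differential inequality in $\alpha$" is not an argument. The upper bound comes from the virial identity for the conjugated operator $H(F)$ (Lemma \ref{l:Commutator psiF} with $\lambda\equiv1$): $(\psi_F,[H,iA_D]\psi_F)\leq(\psi_F,\bigl((x\cdot\nabla)^2 g-x\cdot\nabla(\nabla F)^2\bigr)\psi_F)$ after discarding $-4\|g^{1/2}A_D\psi_F\|^2\leq0$. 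The delicate point, which your sketch misses, is that for $F=\alpha\langle x\rangle$ (or $\alpha\langle x\rangle^\beta$) the term $-x\cdot\nabla(\nabla F)^2$ is \emph{not} $O(\alpha)$: by \eqref{eq:nablaF} it is of size $\alpha^2\langle x\rangle^{-2}$ (resp.\ $\alpha^2\beta^2(1-\beta)\langle x\rangle^{2\beta-2}$), i.e.\ it carries the same $\alpha^2$ as your lower bound. The contradiction therefore requires, in addition, a non-concentration estimate for the normalized vector $\Psi_\alpha=\psi_F/\|\psi_F\|$: one must show $(\Psi_\alpha,(1-\langle q\rangle^{-2})\Psi_\alpha)\geq C_1>0$ uniformly for large $\alpha$, which the paper proves by comparing $\|\mathbb{1}_{|\cdot|\leq\epsilon}(q)e^{F}\psi\|$ with $\|\mathbb{1}_{|\cdot|\geq2\epsilon}(q)e^{F}\psi\|$ (the ratio decays like $\exp\bigl(2\alpha(\langle\epsilon\rangle^\beta-\langle2\epsilon\rangle^\beta)\bigr)$ because $\psi\neq0$ away from the origin). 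Only then does \eqref{eq:contradiction}-type bookkeeping yield $\alpha^2\tau C_1\leq C(\alpha+1)$, false for large $\alpha$ precisely because $a<2$ makes $\tau>0$. Without the virial upper bound and this mass-at-infinity estimate, the claimed contradiction does not follow; and the mechanism is a direct comparison at each large fixed $\alpha$, not a differential inequality in $\alpha$. The smoothing of $|x|$ and the control of $[p,F_\varepsilon]$ that you flag are, by contrast, routine (the paper simply works with $\langle x\rangle^\beta$ from the start).
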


Following their proof, we will extend their result in two directions. First, we will see that for a larger class of $\Delta$-compact potential, we can prove that possible eigenvector of $H$ must satisfy some sub-exponential bounds in the $L^2$-norm. We will also show that this implies the absence of positive eigenvalue if $V$ satisfies a condition of type \eqref{eq:FH}.  Secondly, we will extend their results in the case where the potential is not $\Delta$-bounded but compact from $\ch^1$ to $\ch^{-1}$.
To prove these results we will use another conjugate operator of the form 
\[A_u=\frac{1}{2}(u(p)\cdot q+q\cdot u(p)),\]
where $u$ is a $C^\infty$ vector field with all derivatives bounded. Remark that this type of conjugate operator is essentially self-adjoint with the domain $\cd(A_u)\supset\cd(A_D)$ (see \cite[Proposition 4.2.3]{ABG}). This conjugate operator was also used in \cite{Ma1}. In this paper, it is proved that for a certain choice of $u$ ($u$ bounded), the commutator between $V$ and $A_u$ can avoid us to impose conditions on the derivatives of the potential, which can be useful when $V$ has high oscillations. Moreover, the commutator with the Laplacian, considered as a form with domain $\ch^1$, is quite explicit:
\[[\Delta,i A_u]=2p\cdot u(p)\]
which implies that the commutator is bounded from $\ch^1$ to $\ch^{-1}$. Since the unitary group generated by $A_u$ leaves invariant the domain and the form domain of the Laplacian (see \cite[Proposition 4.2.4]{ABG}), this proves that $\Delta$ is of class $C^1(A_u)$ and, similarly if if we add a potential $V$ which is $\Delta$-compact( respectively compact from $\ch^1$ to $\ch^{-1}$), with the regularity $C^1(A_u,\ch^2,\ch^{-2})$ (respectively $C^1(A_u,\ch^1,\ch^{-1})$), since the domain (respectively the form domain) is the same than the domain of the Laplacian, we deduce that $H=\Delta+V$ is of class $C^1(A_u)$.  If we take $u$ such that $x\cdot u(x)>0$ for all $x\not=0$, remark that the Mourre estimate is true with $A_u$ as conjugate operator on all compact subset of $(0,+\infty)$ for $\Delta$. For this reason, and to follow the proof of \cite{FH}, it will be convenient to choose $u$ of the form $x\lambda(x)$ with $\lambda:\R^\nu\rightarrow\R$ a positive function. All differences with \cite{FH} will be explain in Section \ref{s: exp bounds} and Section \ref{s:abs vp}.
 
 \section{Main results}
Now we will give our main results. Notice that we will recall the notion of regularity ($C^k$, $C^k_U$, $C^{1,1}$) with respect to an operator on Section \ref{ss:Regularity}.

To simplify notations, let $\cu$ be the set of vector fields $u$ with all derivatives bounded which can be writed $u(x)=x\lambda(x)$ with $\lambda$ a $C^\infty$ bounded positive function. In particular, $p\cdot\nabla\lambda(p)$ is bounded. We have the following:

\begin{theorem}\label{th:exp bound 1}
Let $H=\Delta +V$ on $L^2(\R^\nu)$, where $V$ is a symmetric potential such that $V$ is $\Delta$-bounded with bound less than one. Let $E\in\R$ and $\psi$ such that $H\psi=E\psi$. Suppose that there is $u\in \cu$ such that $(\Delta+1)^{-1}[V,iA_u](\Delta+1)^{-1}$ is bounded,
then, for all $0<\beta<1$,
\[S_E=\sup\biggl\{\alpha^2+E\; ; \;\alpha>0, \exp(\alpha\langle x\rangle^\beta)\psi \in L^2(\R^\nu)\biggr\}\]
is either $+\infty$ or in $\ce_u(H)$, the complement of the set of points for which the Mourre estimate (see Definition \ref{def: Mourre est}) is satisfied with respect to $A_u$.
\end{theorem}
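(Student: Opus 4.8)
The plan is to adapt the Froese–Herbst commutator argument from Theorem \ref{th:FH}, replacing the dilation generator $A_D$ by $A_u$ with $u(x)=x\lambda(x)\in\cu$, and tracking the weaker weight $\langle x\rangle^\beta$ instead of $|x|$. The core object is, for a fixed function $F$ growing like $\alpha\langle x\rangle^\beta$, the conjugated operator $H_F = e^F H e^{-F}$ acting on $\psi_F = e^F\psi$, and the quadratic form $\langle \psi_F, [H_F, iA_u]\psi_F\rangle$. Since $e^F$ formally conjugates $H\psi=E\psi$ into $H_F\psi_F = E\psi_F$, one expects a virial-type identity from which a lower bound on $\|\psi_F\|$-weighted quantities is extracted whenever $E$ sits where the Mourre estimate holds.

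First I would set up the family of weights: for $\alpha>0$ and a parameter controlling support at infinity, put $F = \alpha f(\langle x\rangle)$ with $f$ behaving like $t^\beta$ but modified (bounded, or cut off) so that $e^F\psi\in L^2$ by hypothesis on $S_E$, and compute $\nabla F$, which is $O(\alpha\langle x\rangle^{\beta-1})$ — the key gain over the dilation case, where $\nabla F$ would be $O(\alpha)$ and only barely controllable. Then I would expand $H_F = H + [F,\cdot]\text{-corrections}$; explicitly $e^F\Delta e^{-F} = \Delta - (\nabla F)^2 - (p\cdot\nabla F + \nabla F\cdot p)\,i$ (signs depending on conventions), so $\operatorname{Re} H_F = \Delta - |\nabla F|^2 + V$ and $\operatorname{Im} H_F$ is the first-order term. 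The eigenvalue equation then reads $(\Delta - |\nabla F|^2 + V - E)\psi_F = -(\text{first order})\psi_F$, and the real part tells us $\psi_F$ is a near-eigenfunction of $H$ at energy $E + |\nabla F|^2 \approx E + \alpha^2$ up to errors that vanish as $|\nabla F|\to 0$ at infinity. Feeding this into the Mourre estimate for $H$ with conjugate operator $A_u$ at the shifted energy, together with the explicit $[\Delta, iA_u] = 2p\cdot u(p)$ and the boundedness assumption on $(\Delta+1)^{-1}[V,iA_u](\Delta+1)^{-1}$, yields a differential inequality: if $\alpha^2 + E$ lies strictly inside the Mourre set, then $\alpha$ can be increased while keeping $e^F\psi\in L^2$, contradicting the definition of $S_E$ unless $S_E=+\infty$. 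Hence $S_E$, if finite, must lie in $\ce_u(H)$.

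The step I expect to be the main obstacle is the careful bookkeeping of the error terms coming from $|\nabla F|^2$ and the first-order piece $p\cdot\nabla F + \nabla F\cdot p$ in the commutator $[H_F, iA_u]$: with the dilation generator these combine cleanly because $A_D$ is homogeneous, but with $A_u = \tfrac12(u(p)\cdot q + q\cdot u(p))$ the commutators $[|\nabla F|^2, iA_u]$ and $[p\cdot\nabla F, iA_u]$ produce terms involving $u(p)\cdot x$ acting on $\nabla F$, which one must show are lower order — controlled by $\langle x\rangle^{\beta-1}$ times bounded operators — using that $u$ and all its derivatives are bounded and that $\beta<1$. A secondary technical point is justifying the formal manipulations rigorously: $\psi_F$ need not be in $\cd(H)$ a priori, so one works with a regularized weight $F_\epsilon$ (e.g. $F_\epsilon = F/(1+\epsilon F)$) with bounded gradient, derives the estimates uniformly in $\epsilon$, and passes to the limit — exactly the scheme of \cite{FH}, and the place where the hypothesis "$V$ $\Delta$-bounded with bound less than one" is used to keep $\Delta$-bounds on $V\psi_{F_\epsilon}$ under control. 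I would also need to check that $A_u$'s unitary group preserves the relevant domains (cited from \cite{ABG}) so that all commutator forms are well-defined on $\ch^1$ or $\ch^2$ as appropriate.
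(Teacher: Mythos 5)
Your overall strategy is the one the paper follows: conjugate $H$ by $e^{F}$, use the virial identity for $H(F)=e^{F}He^{-F}$ on $\psi_F$, exploit that $\nabla F=O(\jap{x}^{\beta-1})$ for $\beta<1$ to control the non-explicit commutators with $A_u$, and play the resulting bound against the Mourre estimate at the shifted energy, within the Froese--Herbst two-stage weight scheme (the $\epsilon$-regularized logarithmic weight \eqref{eq:premier F} giving polynomial bounds first, then $F=\alpha\jap{x}^\beta+\tau\ln(1+\gamma\jap{x}^\beta\tau^{-1})$ as in \eqref{eq:deuxieme F}). However, there is a concrete gap at the step you yourself identify as the main obstacle. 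Your claim that all the terms produced by replacing $A_D$ by $A_u$ are ``lower order, controlled by $\jap{x}^{\beta-1}$ times bounded operators'' fails for the decisive term: writing $H-H(F)=(\nabla F)^2-q\cdot\nabla g-2igA_D$ with $\nabla F=xg$, the pairing of $2gA_D\psi_F$ against $A_u\psi_F$ produces the quadratic expression $(A_D\psi_F,(g\lambda(p)+\lambda(p)g)A_D\psi_F)$, essentially $4\bigl\|\lambda(p)^{1/2}g^{1/2}A_D\psi_F\bigr\|^2$, and since $g^{1/2}A_D$ carries a weight of order $\jap{x}^{\beta/2}$ this is not a quantity that vanishes on states concentrating at infinity. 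In Lemma \ref{l:Commutator psiF} this term is kept with a definite negative sign and simply discarded in the upper bound; this is precisely where the hypothesis $u\in\cu$, i.e.\ $u(x)=x\lambda(x)$ with $\lambda>0$, enters (it gives $A_u=\lambda(p)A_D+\tfrac{i}{2}\nabla\lambda(p)\cdot p$ and allows the square roots $\lambda(p)^{1/2}$, $g^{1/2}$), and only the residual pieces $[g^{1/2},\lambda(p)]$ and $\nabla\lambda(p)\cdot p$ are shown to vanish, via the weighted Helffer--Sj\"ostrand estimates of Lemma \ref{l:H-S}. Your proposal never uses the specific form of $u$, so as written it cannot close this step; if you instead expand $[2igA_D,iA_u]$ fully, you trade the sign argument for terms like $[g,\lambda(p)]A_D^2$ acting on $\psi_F$, which again require the same pseudodifferential machinery plus weighted $\ch^2$ control of $\Psi_\tau$ uniformly in the parameter.

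The endgame is also vaguer than the argument can afford. ``A differential inequality so that $\alpha$ can be increased'' is not how the contradiction is produced: the proof shows $\limsup(\Psi_\tau,[H,iA_u]\Psi_\tau)\leq 0$ from the virial identity plus the decay of the controllable errors, shows $\|(H-E-\alpha^2)\Psi_\tau\|\lesssim\gamma$ so that $\Psi_\tau$ is spectrally concentrated in the interval $I$ around $S_E$ where the Mourre estimate is assumed, and then confronts the lower bound \eqref{eq:new minor} with the upper bound \eqref{eq:new major}, which is contradictory for $\gamma$ small since $c_0>0$ is fixed. Finally, the first stage (polynomial bounds $\jap{x}^\tau\psi\in L^2$ for all $\tau$, obtained with \eqref{eq:premier F}) is needed to make the hypothesis \eqref{eq:hyp borne exp} available for the second weight family; your $\epsilon$-regularization remark gestures at this but does not supply it.
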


We will give some comments about this Theorem:
\begin{enumerate}[\quad (a)]
\item Let $u\in\cu$. Since the unitary group generated by $A_u$ leaves invariant the Sobolev space $\ch^2$, $V\in C^1(A_u,\ch^2,\ch^{-2})$ if and only if $(\Delta+1)^{-1}[V,iA_u](\Delta+1)^{-1}$ is bounded. Thus, in this case, we can replace the assumption $(\Delta+1)^{-1}[V,iA_u](\Delta+1)^{-1}$ in Theorem \ref{th:exp bound 1} by an assumption of regularity.

\smallskip

\item Since we do not have an explicit expression for the commutator between an operator of multiplication and the conjugate operator $A_u$, in the proof of Theorem \ref{th:exp bound 1}, it is convenient to chose the function $F$, which appears in the proof, with a vanishing gradient at infinity. This is the case if $\beta<1$ but not if $\beta=1$. Remark that for certain type of potential, by using the interaction between the potential and $\Delta$, we can prove the exponential bounds or sub-exponential bounds ($\beta=1$), even if $V\notin C^1(A_u,\ch^2,\ch^{-2})$ (see \cite[Proposition 7.1]{JMb} and Proposition \ref{prop:oscillant}). 

\smallskip

\item Remark that if $V$ is $\Delta$-compact and $V\in C^1_\mathfrak{u}(A_u,\ch^2,\ch^{-2})$, for $u\in\cu$, $V$ satisfies assumptions of Theorem \ref{th:exp bound 1} and the Mourre estimate is true for all $\lambda\in(0,+\infty)$ (see \cite[Theorem 7.2.9]{ABG}). So, in this case, if $E>0$, then $\exp(\alpha\langle x\rangle^\beta)\psi\in L^2(\R^\nu)$ for all $\alpha>0$ and $\beta\in(0,1)$. Moreover, in this case, by the Virial Theorem, we can see 
 that the set of eigenvalues in $J=(0,+\infty)$ has no accumulation point inside $J$ and are of finite multiplicity.

\smallskip

\item If $V$ vanishes at infinity and can be seen as the Laplacian of a short range potential (i.e. $V=\Delta W$ with $\lim\limits_{|x|\rightarrow +\infty}\jap{x}W=0$), then $V$ is $\Delta$-compact and $\jap{q}V:\ch^2\rightarrow\ch^{-2}$ is compact. In this case, we can apply Theorem \ref{th:exp bound 1} to $H=\Delta+V$.

\smallskip

\item For $\zeta\in\R$, $\theta>0$, $k\in\R^*$ and $w\in\R$, let
\[V(x)=w(1-\kappa(|x|))\frac{\sin(|x|^\zeta)}{|x|^\theta},\]
 with $\kappa\in C^\infty_c(\R,\R)$ with $\kappa(|x|)=1$ if $|x|<1$, $0\leq\kappa\leq1$. Note that this type of potential was already studied in \cite{BaD, DMR, DR1, DR2, JMb, ReT1, ReT2}. If $\zeta<\theta$ or if $\theta>1$, we can see that $V$ is a long range  or a short range potential. Moreover, in \cite{JMb}, it is proved that if $\zeta+\theta>2$, then $V$ has a good regularity with respect to $A_D$.  So we can apply Theorem \ref{th:FH} in these two areas. In \cite{JMb}, they also showed that if $\zeta>1$ and $\theta>1/2$, then a possible eigenvector associated with positive energy has unlimited exponential bounds. But, if $|\zeta-1|+\theta<1$, they proved that $H\notin C^1(A_D)$ and so we cannot apply Theorem \ref{th:FH} with this potential.
 If $2\zeta+\theta>3$, $\zeta>1$ and $0<\theta\leq1/2$, then $V$ is of class $C^{1,1}(A_u,\ch^2,\ch^{-2})\subset C^1_\mathfrak{u}(A_u,\ch^2,\ch^{-2})$ for all $u$ bounded (see \cite[Lemma 5.4]{Ma1}). So, Theorem \ref{th:exp bound 1} applies if $2\zeta+\theta>3$ with $\zeta>1$ and $0<\theta\leq1/2$.
 
\end{enumerate}





Since the Laplacian operator $\Delta$ can be seen as a form on $\ch^1$, the first order Sobolev space, to $\ch^{-1}$, the dual space of $\ch^1$, we can also study the case where $V:\ch^1\rightarrow\ch^{-1}$ is compact. In this case, the difference between the resolvent of $H$ and the resolvent of $\Delta$ is compact and the essential spectrum of $H$ is still $[0,+\infty)$. We have the following

\begin{theorem}\label{th:exp bound 2}
Let $H=\Delta +V$ on $L^2(\R^\nu)$, where $V$ is a real-valued function such that $V:\ch^1\rightarrow\ch^{-1}$ is bounded with relative bound less than one. Let $E\in\R$ and $\psi$ such that $H\psi=E\psi$. If there is $u\in \cu$ such that $\jap{p}^{-1}[V,iA_u]\jap{p}^{-1}$ is bounded,
then, for all $0<\beta<1$,
\[S_E=\sup\biggl\{\alpha^2+E\; ; \;\alpha>0, \exp(\alpha\langle x\rangle^\beta)\psi \in L^2(\R^\nu)\biggr\}\]
is either $+\infty$ or in $\ce_u(H)$.
\end{theorem}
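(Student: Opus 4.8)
The strategy is to follow Froese--Herbst's argument (as used for Theorem~\ref{th:exp bound 1}), but work throughout in the form setting $\ch^1 \leftrightarrow \ch^{-1}$ rather than the operator setting $\ch^2 \leftrightarrow \ch^{-2}$, exploiting that $[\Delta, iA_u] = 2p\cdot u(p)$ is already $\ch^1$-to-$\ch^{-1}$ bounded and that $\jap{p}^{-1}[V,iA_u]\jap{p}^{-1}$ is bounded by hypothesis. Fix $\beta \in (0,1)$ and suppose $S_E$ is finite; we must show $S_E \in \ce_u(H)$. Introduce the weight $F = F_\varepsilon(x) = \alpha\jap{x}^\beta (1+\varepsilon\jap{x}^\beta)^{-1}$ (a bounded regularization, $\varepsilon>0$), so $e^{F}\psi \in L^2$ and $\nabla F$ vanishes at infinity — this last point is exactly why $\beta<1$ is needed and is why we can afford not to have an explicit formula for $[V,iA_u]$. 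Set $\psi_F = e^{F}\psi$ and $H_F = e^{F} H e^{-F}$, and record that $H_F = \Delta + V - |\nabla F|^2 + \i(p\cdot\nabla F + \nabla F\cdot p)$ as a form on $\ch^1$, so $H_F \psi_F = E\psi_F$.

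**Key steps.** First I would establish the a~priori regularity: $\psi_F \in \ch^1$ uniformly in $\varepsilon$, with $\|\psi_F\|_{\ch^1}$ controlled by $\|\psi_F\|$; this uses that $V$ is $\ch^1 \to \ch^{-1}$ bounded with relative bound $<1$ together with the coercivity $\mathrm{Re}\,H_F \geq (1-a)\Delta - C$ on $\ch^1$, absorbing the first-order error term $\i(p\cdot\nabla F + \nabla F \cdot p)$ since $\nabla F$ is bounded. Second, compute the commutator $[H_F, iA_u]$ as a form on $\ch^1$: it equals $2p\cdot u(p) + [V,iA_u] + (\text{terms involving } \nabla F \text{ and its derivatives, all of which carry a factor vanishing at infinity})$. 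Third — the heart of the matter — run the Mourre-type positivity/virial argument: assuming $\alpha_0 := \sqrt{S_E - E} > 0$, one shows that for $\alpha$ slightly less than $\alpha_0$ the quantity $\langle \psi_F, [H_F, iA_u]\psi_F\rangle$ is, up to terms that vanish as $\varepsilon \to 0$ (because $\nabla F \to 0$ at infinity and $\psi_F \to e^{\alpha_0\jap{x}^\beta}\psi$ in a suitable sense) and up to the localized-away piece, bounded below by a positive multiple of $\|\psi_F\|^2$ provided the Mourre estimate holds at $S_E$ with respect to $A_u$; but the commutator expectation on an eigenstate of $H_F$ is essentially zero by the virial identity, forcing $\|\psi_F\|$ to blow up — contradicting $e^{\alpha_0\jap{x}^\beta}\psi \in L^2$ unless we had already reached $S_E$, i.e.\ unless the Mourre estimate fails at $S_E$. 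Making this dichotomy precise is the step that requires care: one typically shows that if $S_E \notin \ce_u(H)$ and $S_E - E = \alpha_0^2$ with $\alpha_0 < \infty$, then in fact $e^{\alpha\jap{x}^\beta}\psi \in L^2$ for some $\alpha > \alpha_0$, contradicting the definition of $S_E$.

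**Main obstacle.** The delicate point, and the one genuinely new relative to Theorem~\ref{th:exp bound 1}, is controlling the first-order conjugation error $\i(p\cdot\nabla F + \nabla F\cdot p)$ and its commutator with $A_u$ using only $\ch^1$-to-$\ch^{-1}$ bounds: in the $\Delta$-bounded case one has a spare power of $\jap{p}$ to play with, but here every estimate must close with $p$ appearing to the first power only. The resolution is that $\nabla F$ and all its derivatives are $o(1)$ at infinity (here $\beta<1$ is essential), so these error terms are relatively $\Delta^{1/2}$-bounded with small bound and, crucially, are \emph{negligible in the limit} $\varepsilon\to 0$; one localizes with a cutoff $\chi(|x|<R)$, handles the compact region by $\ch^1 \to \ch^{-1}$ compactness (or boundedness) of $V$ and the explicit form of $[\Delta,iA_u]$, and sends $R\to\infty$. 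The remaining routine-but-tedious work is the commutator expansion of $[e^FHe^{-F}, iA_u]$ on $\ch^1$ and the verification that the unitary group of $A_u$ preserves $\ch^1$ (which is cited from \cite[Proposition~4.2.4]{ABG}) so that all formal manipulations are justified. Once these are in place, the argument is identical in structure to the proof of Theorem~\ref{th:exp bound 1}, with $\jap{p}$ replacing $(\Delta+1)^{1/2}$ everywhere, and we refer to Section~\ref{s: exp bounds} for the details common to both.
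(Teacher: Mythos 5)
There is a genuine gap: the contradiction mechanism at the heart of the theorem is missing, and the version you describe would not work. Your weight family $F_\varepsilon(x)=\alpha\jap{x}^\beta(1+\varepsilon\jap{x}^\beta)^{-1}$ with $\alpha$ slightly \emph{below} $\alpha_0=\sqrt{S_E-E}$ converges to $\alpha\jap{x}^\beta$, and for such $\alpha$ one already has $e^{\alpha\jap{x}^\beta}\psi\in L^2$; hence $\|\psi_{F_\varepsilon}\|$ stays bounded as $\varepsilon\to0$, the normalized vectors do not escape to infinity, the compact operator $K_0$ in the Mourre estimate cannot be discarded, and no contradiction is produced. Moreover, your stated contradiction ("forcing $\|\psi_F\|$ to blow up, contradicting $e^{\alpha_0\jap{x}^\beta}\psi\in L^2$") presumes membership at the critical exponent $\alpha_0$, which is not among the hypotheses: $S_E$ is a supremum and need not be attained. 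The paper's proof (following Froese--Herbst) instead runs the argument with the two-parameter weight $F=\alpha\jap{x}^\beta+\tau\ln(1+\gamma\jap{x}^\beta\tau^{-1})$, with $\alpha<\alpha_1<\alpha+\gamma$ and $\alpha^2+E$ within $\delta/2$ of $S_E$, and lets $\tau\to\infty$: then $\|\psi_F\|\to\|e^{(\alpha+\gamma)\jap{x}^\beta}\psi\|=+\infty$, so $\Psi_\tau=\psi_F/\|\psi_F\|$ tends weakly to zero (this is what kills $K_0$ and the Helffer--Sj\"ostrand remainder terms), while $\|(H-E-\alpha^2)\Psi_\tau\|\le b\gamma$ localizes the energy in the Mourre interval $I$; combining the virial upper bound $\limsup_\tau(\Psi_\tau,[H,iA_u]\Psi_\tau)\le0$ with the Mourre lower bound then gives $c_0(1-(b_6\gamma)^2)\le b_7\gamma$, a contradiction for $\gamma$ small. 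You acknowledge that "making the dichotomy precise" requires care, but that dichotomy is exactly the content of the proof, and your sketch supplies no mechanism for it; the preliminary stage with $F=\tau\ln(\jap{x}(1+\epsilon\jap{x})^{-1})$, which yields the polynomial bounds needed before the second weight may be used, is also absent.

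In addition, the step specific to the form setting is not addressed where it actually matters. In the paper, the hypotheses that $V:\ch^1\to\ch^{-1}$ has relative bound less than one and that $\jap{p}^{-1}[V,iA_u]\jap{p}^{-1}$ is bounded are used to write $H+1=\jap{p}(1+\jap{p}^{-1}V\jap{p}^{-1})\jap{p}$, deduce $\|\jap{p}\Psi\|\lesssim\|\Psi\|$-type bounds and $\|\jap{p}E(\R\backslash I)\Psi_\epsilon\|\to0$, and thereby show that the cross terms $(\Psi_\epsilon,E(\R\backslash I)[H,iA_u]\Psi_\epsilon)$ and $(\Psi_\epsilon,E(I)[H,iA_u]E(\R\backslash I)\Psi_\epsilon)$ vanish in the limit; this is the bridge between the virial quantity $(\Psi,[H,iA_u]\Psi)$ and the localized quantity $(\Psi,E(I)[H,iA_u]E(I)\Psi)$ to which the Mourre estimate applies. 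You do state the a priori $\ch^1$ bound on $\psi_F$, which is the right starting point, but you never explain how to pass from the full commutator expectation to its spectrally localized version, nor do you invoke the virial identity for $H(F)$ (the analogue of Lemma \ref{l:Commutator psiF}, valid when $V\in C^1(A_u,\ch^1,\ch^{-1})$). The general orientation --- redo Froese--Herbst in the $\ch^1/\ch^{-1}$ setting with $\jap{p}$ replacing $(\Delta+1)^{1/2}$, exploiting that $\nabla F$ vanishes at infinity for $\beta<1$ --- agrees with the paper, but as proposed the argument does not close.
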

We make some comments about this theorem:
\begin{enumerate}[\quad (a)]
\item Since $V:\ch^1\rightarrow\ch^{-1}$ is bounded with relative bound less than one, by the KLMN Theorem, $H$ can be considered as a form with form domain $\ch^1$ and is associated to a self-adjoint operator.

\smallskip

\item Let $u\in\cu$. Since the unitary group generated by $A_u$ leaves invariant the Sobolev space $\ch^1$, $V\in C^1(A_u,\ch^1,\ch^{-1})$ if and only if 
\[(\Delta+1)^{-1/2} [V,iA_u](\Delta+1)^{-1/2}\]
 is bounded. Thus, in this case, we can replace the assumption on \[(\Delta+1)^{-1/2} [V,iA_u](\Delta+1)^{-1/2}\] in Theorem \ref{th:exp bound 2} by an assumption of regularity.

\smallskip

\item If $V:\ch^1\rightarrow\ch^{-1}$ is compact and if $V\in C^1_\mathfrak{u}(A_u,\ch^1,\ch^{-1})$, then 
\[(\Delta+1)^{-1/2} [V,iA_u](\Delta+1)^{-1/2}\]
 is compact. Thus the Mourre estimate is true on all compact subset of $(0,+\infty)$. So, if $E>0$, in this case, the sub-exponential bounds are true for all $\alpha>0$.

\smallskip

\item For $\zeta\in\R$, $\theta\in\R$, $k\in\R^*$ and $w\in\R$, let
\[V(x)=w(1-\kappa(|x|))\frac{\sin(|x|^\zeta)}{|x|^\theta},\]
 with $\kappa\in C^\infty_c(\R,\R)$ with $\kappa(|x|)=1$ if $|x|<1$, $0\leq\kappa\leq1$. If $\zeta+\theta>2$, then $V:\ch^1\rightarrow \ch^{-1}$ is compact and $V$ is of class $C^{1,1}(A_u,\ch^1,\ch^{-1})\subset C^1_\mathfrak{u}(A_u,\ch^1,\ch^{-1})$ for all $u$ bounded (see \cite[Lemma 5.4]{Ma1}). So, Theorem \ref{th:exp bound 2} applies if $\zeta+\theta>2$, even if $\theta\leq0$.

 \smallskip
 
 \item Let \[V(x)=w(1-\kappa(|x|)) e^{3|x|/4}\sin(e^{|x|})\]
with $w\in\R$, $\kappa\in C^\infty_c(\R,\R)$, $0\leq\kappa\leq1$ and $\kappa(|x|)=1$ if $|x|<1$. Note that this type of potential was already studied in \cite{Co,CoG}. We can show that $V:\ch^1\rightarrow \ch^{-1}$ is compact and $V$ is of class $C^{1,1}(A_u,\ch^1,\ch^{-1})\subset C^1_\mathfrak{u}(A_u,\ch^1,\ch^{-1})$ for all $u$ bounded (see\cite[Lemma 5.6]{Ma1}). So, for all $w\in\R$, Theorem \ref{th:exp bound 2} applies. Moreover, since $V$ is not $\Delta$-bounded, we cannot apply Theorem \ref{th:FH}.

\smallskip

\item Assume that $V:\ch^1\rightarrow\ch^{-1}$ is symmetric, bounded with bound less than one and that there is $\mu>0$ such that $\jap{x}^{1+\mu}V(x)\in\ch^{-1}$. Then there is $u\in\cu$ such that $V\in C^{1,1}(A_u,\ch^1,\ch^{-1})$ (see \cite[Lemma 5.8]{Ma1}). In particular, for this type of potential, Theorem \ref{th:exp bound 2} applies. For example, in dimension $\nu\geq3$, if we take $\chi:\R\rightarrow\R$ such that $\chi\in C^3$, $\chi(|x|)=0$ if $|x|>1$ and $\chi'(0)=\chi''(0)=1$, the potential defined by 
\[V(x)=\sum\limits_{n=2}^{+\infty}n^{(3\nu-1)/2}\chi'(n^{3\nu/2}(|x|-n)),\] 
is compact on $\ch^1$ to $\ch^{-1}$ and of class $C^{1,1}(A_u,\ch^1,\ch^{-1})$ for an appropriate $u$. Moreover, we can show that this potential is neither $\Delta$-bounded, neither of class $C^1(A_D,\ch^1,\ch^{-1})$ (see \cite[Lemma 5.10]{Ma1}). In particular, Theorems \ref{th:FH} and \ref{th:exp bound 1} do not apply with this potential.

\smallskip

\item Remark that all examples we gave are central potentials. But it is not necessary to have this property and we gave only examples which are central because it is easier. In particular, if $W$ satisfies $\jap{q}^{1+\epsilon}W$ is bounded for one choice of $\epsilon>0$, then Theorem \ref{th:exp bound 2} applies for $V=\text{div}(W)$.
\end{enumerate}

Since in the proof of Corollary \ref{c:FH}, one use only assumption \eqref{eq:FH} by applying it on certain vectors that are constructed with a possible eigenvector of $H$, we can weaken the conditions on the potential.
 For $0<\beta<1$ and $\alpha>0$, let $F_\beta(x)=\alpha\jap{x}^\beta$. We have the following

\begin{theorem}\label{th:abs 1}
Suppose that $V$ is $\Delta$-compact.\\
Let $\psi$ such that $H\psi=E\psi$ with $E>0$ and such that $\psi_F=\exp(F_\beta(q))\psi\in L^2(\R^\nu)$ for all $\alpha>0$, $0<\beta<1$. \\
Suppose that there is $\delta>-2$, $\delta',\sigma,\sigma'\in\R$ such that $\delta+\delta'>-2$ and, for all $\alpha>0$, $0<\beta<1$,
\begin{equation}\label{eq:V,iA petit 1}
(\psi_F,[V,iA_D]\psi_F)\geq\delta(\psi_F,\Delta\psi_F)+\delta' (\psi_F,(\nabla F_\beta)^2\psi_F)+(\sigma\alpha+\sigma')\|\psi_F\|^2.
\end{equation}
Then $\psi=0$.
\end{theorem}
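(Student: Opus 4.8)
The plan is to follow the scheme of Froese–Herbst's proof of Corollary~\ref{c:FH}, but to feed it with the weaker commutator inequality \eqref{eq:V,iA petit 1} in place of the pointwise bound \eqref{eq:FH}. The starting point is that, under our hypotheses, $\psi_F = \exp(F_\beta(q))\psi$ lies in $L^2$ for \emph{every} $\alpha>0$ and $0<\beta<1$; in particular $S_E = +\infty$. The key identity is the one used in \cite{FH}: if $H\psi = E\psi$, then the conjugated operator $H_F := e^{F_\beta(q)} H e^{-F_\beta(q)}$ satisfies $H_F \psi_F = E\psi_F$ in the appropriate weak sense, and one computes
\[
H_F = \Delta - (\nabla F_\beta(q))^2 - i\,(p\cdot\nabla F_\beta(q) + \nabla F_\beta(q)\cdot p) + V.
\]
Taking the real part of $(\psi_F, (H_F - E)\psi_F) = 0$ gives
\[
(\psi_F,\Delta\psi_F) - (\psi_F,(\nabla F_\beta)^2\psi_F) + (\psi_F, V\psi_F) = E\|\psi_F\|^2,
\]
while the commutator quantity of interest, $(\psi_F, i[H_F, A_D]\psi_F)$, must vanish by the Virial-type argument applied to the conjugated eigenvalue problem (this is exactly the step where $\Delta$-compactness of $V$ is used, so that everything is well-defined on the form domain and the boundary terms from integration by parts vanish).

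Next I would expand $(\psi_F, i[H_F, A_D]\psi_F) = 0$. Using $[\Delta, iA_D] = 2\Delta$, $[(\nabla F_\beta)^2, iA_D] = q\cdot\nabla\big((\nabla F_\beta)^2\big)$ (a quantity controlled by $(\nabla F_\beta)^2$ times a bounded factor plus lower order, since $\beta<1$ makes $\nabla F_\beta$ decay), the commutator of the first-order term with $A_D$ (which produces terms bounded by $\|\psi_F\|^2$ and a controllable multiple of $(\psi_F,\Delta\psi_F)$, again thanks to $\beta<1$), and finally $(\psi_F, [V, iA_D]\psi_F)$ bounded below by \eqref{eq:V,iA petit 1}, one arrives at an inequality of the shape
\[
0 = (\psi_F, i[H_F,A_D]\psi_F) \geq (2+\delta+o(1))(\psi_F,\Delta\psi_F) + (\delta'+o(1))(\psi_F,(\nabla F_\beta)^2\psi_F) + (\sigma\alpha + \sigma' + o(1))\|\psi_F\|^2,
\]
where the $o(1)$ terms come from the decaying gradient of $F_\beta$ and can be absorbed once $\alpha$ is fixed. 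Combining this with the real-part identity above — which lets one trade $(\psi_F,V\psi_F)$ and re-express $(\psi_F,\Delta\psi_F) + (\psi_F,(\nabla F_\beta)^2\psi_F)$ in terms of $E\|\psi_F\|^2$ plus the cross terms — and using that $\delta > -2$, $\delta+\delta' > -2$ guarantee the coefficients of $(\psi_F,\Delta\psi_F)$ and of the combination $(\psi_F,\Delta\psi_F)+(\psi_F,(\nabla F_\beta)^2\psi_F)$ stay strictly positive, one derives an a priori bound of the form $\alpha^2 \|\psi_F\|^2 \lesssim C_E \|\psi_F\|^2$ with $C_E$ independent of $\alpha$. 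Since $\alpha>0$ is arbitrary, this forces $\|\psi_F\| = 0$, i.e. $\psi = 0$.

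The main obstacle, and the place requiring genuine care rather than bookkeeping, is making the commutator computation $(\psi_F, i[H_F, A_D]\psi_F) = 0$ rigorous: $\psi_F$ need not lie in the domain of $A_D$, the operator $H_F$ is non-self-adjoint, and $F_\beta(q)$ is an unbounded multiplier, so one must justify the Virial identity by a regularization argument — approximating $F_\beta$ by bounded functions $F_\beta^{(n)}$ (e.g. cutting off at large $|x|$), working with $A_D$ replaced by a bounded regularization $A_D(\varepsilon) = A_D(1+\varepsilon A_D^2)^{-1}$, applying the identity there, and then passing to the limit using the $L^2$ a priori bounds on $\psi_F$ and its first derivatives. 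This is precisely the technical core of \cite[Theorem~3.1]{FH}, and the adaptation here is routine once one checks that the error terms generated by $F_\beta$ (rather than the linear $F = \alpha|x|$ of \cite{FH}) are harmless, which is exactly where the restriction $\beta<1$ enters: $\nabla F_\beta = \alpha\beta\jap{x}^{\beta-2}x$ and $q\cdot\nabla F_\beta$ are bounded and the relevant second-order quantities decay, so no new unbounded contributions appear. A secondary point is to verify that $(\psi_F,\Delta\psi_F) < \infty$ a priori, which follows since $V$ is $\Delta$-bounded with bound less than one and $\psi_F$ satisfies the conjugated eigenvalue equation; this is needed for every term above to make sense.
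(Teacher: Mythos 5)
Your overall strategy is the paper's: conjugate by $e^{F_\beta}$, use the Virial identity for $H(F)$ (the paper's Lemma \ref{l:Commutator psiF} with $\lambda\equiv1$), bound $(\psi_F,[H,iA_D]\psi_F)$ from below by \eqref{eq:V,iA petit 1} together with $[\Delta,iA_D]=2\Delta$, convert $(\psi_F,\Delta\psi_F)$ into $(\psi_F,(\nabla F_\beta)^2\psi_F)$ via $\Delta$-compactness and \eqref{eq:H psiF}, and bound it from above by the explicit multiplication commutators. However, two of your estimates are not right as stated. First, the commutator coming from the first-order term $i(p\cdot\nabla F_\beta+\nabla F_\beta\cdot p)$ is \emph{not} ``a controllable multiple of $(\psi_F,\Delta\psi_F)$'': its exact contribution is $-4\|g_\beta^{1/2}A_D\psi_F\|^2$, which involves $A_D$ with an $x$-growing weight and cannot be dominated by $\epsilon\Delta+C$; it is discarded only because of its sign. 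Second, the term $q\cdot\nabla(\nabla F_\beta)^2\approx 2(\beta-1)(\nabla F_\beta)^2$ is of exactly the same size $\alpha^2\langle x\rangle^{2\beta-2}$ as the main term, and the remaining errors are $O(\alpha)\|\psi_F\|^2$; nothing here is ``$o(1)$ absorbed once $\alpha$ is fixed''. The coefficient survives only because one chooses $\mu<1$ and $\beta<1$ close enough to $1$ that $(2+\delta)\mu+\delta'+2\beta-2>0$ (this is where $\delta>-2$ and $\delta+\delta'>-2$ enter), and the contradiction is quantitative in the limit $\alpha\to\infty$, comparing growth $\alpha^2$ against $\alpha$.

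The genuine gap is the final step. What the comparison actually yields is the weighted inequality $\alpha^2\tau\,(\Psi_\alpha,\langle q\rangle^{2\beta-2}(1-\langle q\rangle^{-2})\Psi_\alpha)\leq C(\alpha+1)$ with $\Psi_\alpha=\psi_F/\|\psi_F\|$ (the paper's \eqref{eq:contradiction}), not your claimed $\alpha^2\|\psi_F\|^2\lesssim C_E\|\psi_F\|^2$. Since $\langle q\rangle^{2\beta-2}$ decays and $\Psi_\alpha$ concentrates further and further out as $\alpha\to\infty$, the left-hand side could a priori degenerate, so no contradiction follows yet. The paper closes this by two additional arguments that your sketch omits entirely: (i) the elementary bound $t^{\beta-1}e^{t^{\beta}}\geq 1$, which lets one absorb the weight $\langle q\rangle^{2\beta-2}$ at the price of replacing $\alpha$ by $\alpha-1$ in the exponential (passing from $\Psi_\alpha$ to $\Psi_{\alpha-1}$, giving \eqref{eq:contradiction2}); and (ii) a non-concentration estimate showing $(\Psi_{\alpha-1},(1-\langle q\rangle^{-2})\Psi_{\alpha-1})\geq C_1>0$ uniformly for large $\alpha$, proved by comparing the weights $e^{2\alpha\langle\epsilon\rangle^\beta}$ and $e^{2\alpha\langle 2\epsilon\rangle^\beta}$ on $\{|x|\leq\epsilon\}$ and $\{|x|\geq2\epsilon\}$ and using $\psi\neq0$. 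Only then does $\alpha^2\tau C_1\leq C(\alpha+1)$ give the contradiction for large $\alpha$. Without (i) and (ii) your argument does not close, and these are not routine bookkeeping but a substantive part of the proof.
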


If we only suppose that $V:\ch^1\rightarrow\ch^{-1}$ is bounded (but not necessarily $\Delta$-bounded), we have the following:
\begin{theorem}\label{th:abs 2}
Suppose that $V:\ch^1\rightarrow\ch^{-1}$ is bounded.\\
Let $\psi$ such that $H\psi=E\psi$ with $E>0$. For $0<\beta<1$ and $\alpha>0$, let $F_\beta(x)=\alpha\jap{x}^\beta$. Denote $\psi_F=\exp(F_\beta(q))\psi$. \\
Suppose that $\psi_F\in L^2(\R^\nu)$ for all $\alpha>0$, $0<\beta<1$, and that there is $\delta>-2$, $\delta',\sigma,\sigma'\in\R$ such that $\delta+(1+\|\jap{p}^{-1}V\jap{p}^{-1}\|)\delta'>-2$ and, for all $\alpha>0$, $0<\beta<1$,
\begin{equation}\label{eq:V,iA petit 2}
(\psi_F,[V,iA_D]\psi_F)\geq\delta(\psi_F,\Delta\psi_F)+\delta' (\psi_F,(\nabla F_\beta)^2\psi_F)+(\sigma\alpha+\sigma')\|\psi_F\|^2.
\end{equation}
Then $\psi=0$.
\end{theorem}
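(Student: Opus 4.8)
The plan is to run, in the form sense on $\ch^1$, the Froese--Herbst argument used for Corollary~\ref{c:FH} and Theorem~\ref{th:abs 1}, keeping track of where the form-bound $\|\jap{p}^{-1}V\jap{p}^{-1}\|$ of $V$ enters. Fix $0<\beta<1$ and $\alpha>0$, write $F=F_\beta$ and $\psi_F=\e^{F(q)}\psi$. I would first record the elementary facts underpinning everything: $\nabla F=\alpha\beta\jap{q}^{\beta-2}q$ is bounded with all derivatives bounded; since $\beta<1$, $\jap{q}^{N}\e^{F}\le C_{N}\,\e^{(\alpha+1)\jap{q}^\beta}$ for every $N$ (a polynomial weight costs only an infinitesimal increase of $\alpha$); hence, using the hypothesis ``$\e^{\alpha'\jap{q}^{\beta}}\psi\in L^2$ for all $\alpha'$'' together with a standard weighted elliptic bootstrap from $H\psi=E\psi$, one gets $\psi_F\in\ch^1$ and $\jap{q}^{N}\psi_F,\ \jap{q}^{N}\jap{p}\psi_F,\ A_D\psi_F\in L^2$ for all $N$. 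The conjugated operator $H_F=\e^{F}H\e^{-F}$, read as a form on $\ch^1$, then equals $\Delta+V-(\nabla F)^2+\i(p\cdot\nabla F+\nabla F\cdot p)$ (each term makes sense on $\ch^1$ since $V:\ch^1\to\ch^{-1}$ and $\nabla F$ is bounded), and $H_F\psi_F=E\psi_F$.

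Splitting $(\psi_F,(H_F-E)\psi_F)=0$ into real and imaginary parts gives $(\psi_F,(p\cdot\nabla F+\nabla F\cdot p)\psi_F)=0$ and
\begin{equation}\label{eq:pr}
(\psi_F,\Delta\psi_F)+\langle V\psi_F,\psi_F\rangle-(\psi_F,(\nabla F)^2\psi_F)=E\|\psi_F\|^2.
\end{equation}
Testing with $A_D$ and using $(H_F-E)\psi_F=0$ yields the Froese--Herbst virial identity $(\psi_F,[\Delta+V-(\nabla F)^2,\i A_D]\psi_F)=-(\psi_F,\{A_D,p\cdot\nabla F+\nabla F\cdot p\}\psi_F)$ (for $F=0$ this is just the Virial theorem). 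Writing $\nabla F=\mu(q)q$ with $\mu=\alpha\beta\jap{q}^{\beta-2}>0$, one checks $p\cdot\nabla F+\nabla F\cdot p=2\mu A_D-\i(q\cdot\nabla\mu)$ and $\{A_D,2\mu A_D-\i(q\cdot\nabla\mu)\}=4A_D\mu A_D-(q\cdot\nabla)^2\mu$; with $[\Delta,\i A_D]=2\Delta$ and $[(\nabla F)^2,\i A_D]=-q\cdot\nabla(\nabla F)^2$ this becomes
\begin{equation}\label{eq:pv}
4\|\mu^{1/2}A_D\psi_F\|^2+2(\psi_F,\Delta\psi_F)+(\psi_F,[V,\i A_D]\psi_F)+(\psi_F,q\cdot\nabla(\nabla F)^2\psi_F)=(\psi_F,(q\cdot\nabla)^2\mu\,\psi_F).
\end{equation}

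Now I would insert the hypothesis \eqref{eq:V,iA petit 2} into \eqref{eq:pv}. Because $\beta<1$ all the $\mu$-terms are lower order: $\mu^{1/2}A_D\psi_F$ contributes with the favourable sign, $(q\cdot\nabla)^2\mu=O(\alpha\jap{q}^{\beta-2})$ vanishes at infinity, and $q\cdot\nabla(\nabla F)^2=(2\beta-2)(\nabla F)^2+\alpha^2O(\jap{q}^{2\beta-4})$ has a non-positive leading part. The decisive bookkeeping is the coefficient of $(\psi_F,\Delta\psi_F)$: if $\delta'\ge0$ one drops $\delta'(\psi_F,(\nabla F)^2\psi_F)\ge0$ and it is $2+\delta>0$; if $\delta'<0$ one invokes \eqref{eq:pr} together with $|\langle V\psi_F,\psi_F\rangle|\le\|\jap{p}^{-1}V\jap{p}^{-1}\|\,(\psi_F,(\Delta+1)\psi_F)$ to get $(\psi_F,(\nabla F)^2\psi_F)\le(1+\|\jap{p}^{-1}V\jap{p}^{-1}\|)(\psi_F,\Delta\psi_F)+C\|\psi_F\|^2$, so that the coefficient becomes $2+\delta+(1+\|\jap{p}^{-1}V\jap{p}^{-1}\|)\delta'$ up to a $(2\beta-2)(1+\|\jap{p}^{-1}V\jap{p}^{-1}\|)$ correction made negligible by choosing $\beta$ close to $1$ --- and $2+\delta+(1+\|\jap{p}^{-1}V\jap{p}^{-1}\|)\delta'>0$ is exactly the assumption of the theorem. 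Thus, for such $\beta$, $4\|\mu^{1/2}A_D\psi_F\|^2+c_0(\psi_F,\Delta\psi_F)\le C(1+\alpha)\|\psi_F\|^2$ with $c_0>0$; comparing this with \eqref{eq:pr} and letting $\alpha\to+\infty$, exactly as in the Froese--Herbst proof of Corollary~\ref{c:FH} and of Theorem~\ref{th:abs 1} (and using $\|\psi_F\|\ge\|\psi\|$), forces $\psi=0$.

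I expect the main obstacle to be the form-theoretic bookkeeping: since $V$ is only a form $\ch^1\to\ch^{-1}$, neither $[V,\i A_D]$ nor $V\psi_F$ may be handled as operators, so every such object has to be read through $\jap{p}^{-1}(\cdot)\jap{p}^{-1}$ sandwiches, and it is precisely where \eqref{eq:pr} is used to trade $(\psi_F,(\nabla F)^2\psi_F)$ against $(\psi_F,\Delta\psi_F)$ that $\|\jap{p}^{-1}V\jap{p}^{-1}\|$ surfaces and dictates the strengthened assumption $\delta+(1+\|\jap{p}^{-1}V\jap{p}^{-1}\|)\delta'>-2$ (as opposed to $\delta+\delta'>-2$ in the $\Delta$-bounded case of Theorem~\ref{th:abs 1}). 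A secondary, more routine, point is the weighted regularity giving $\psi_F\in\ch^1$ and $A_D\psi_F\in L^2$, which is what makes the virial identity \eqref{eq:pv} rigorous.
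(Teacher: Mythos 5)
Your proposal is correct and follows essentially the same route as the paper: the Froese--Herbst virial identity for $A_D$ (the paper's Lemma \ref{l:Commutator psiF} with $\lambda\equiv1$), the identity $(\psi_F,H\psi_F)=(\psi_F,((\nabla F_\beta)^2+E)\psi_F)$, and the form bound $\|\jap{p}^{-1}V\jap{p}^{-1}\|$ to trade $(\nabla F_\beta)^2$ against $\Delta$, then $\beta$ close to $1$ and $\alpha\to+\infty$ exactly as in the proof of Theorem \ref{th:abs 1}. The only cosmetic difference is the direction of the trade (you convert $(\nabla F_\beta)^2$ into $\Delta$, the paper converts $\Delta$ into $(\nabla F_\beta)^2$ through $(\psi_F,H\psi_F)$); both yield the same condition $(2+\delta)(1+\|\jap{p}^{-1}V\jap{p}^{-1}\|)^{-1}+\delta'>0$ and the same large-$\alpha$ contradiction.
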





We make some comments on the two previous theorems:

\begin{enumerate}[\quad (a)]
\item Since we suppose that $\psi$ has sub-exponential bounds, for $\alpha,\beta$ fixed, $\psi_F$ has sub-exponential bounds too. Moreover, we can remark that $\psi_F$ is an eigenvector for 
\[H(F):=e^F H e^{-F}= H-(\nabla F)^2+(ip\nabla F+i\nabla F p).\]
This makes easier to prove \eqref{eq:V,iA petit 1} and \eqref{eq:V,iA petit 2}.

\smallskip

\item Remark that in \eqref{eq:FH}, the inequality is required to be true in the sense of the form. In \eqref{eq:V,iA petit 1} and \eqref{eq:V,iA petit 2}, we do not ask to have this inequalities for all $\phi\in\cd(H)\cap\cd(A_u)$, but only for a type of vector with high decrease at infinity.
 
\smallskip

\item Assumption \eqref{eq:FH} corresponds to the case where $\delta'=\sigma=0$ and $\delta>-2$ in \eqref{eq:V,iA petit 1}. In particular, if $V$ satisfies \eqref{eq:FH}, it satisfies \eqref{eq:V,iA petit 1} too.

\smallskip

\item Remark that if $\delta'\geq0$, conditions $\delta+\delta'>-2$ and $\delta+(1+\|\jap{p}^{-1}V\jap{p}^{-1}\|)\delta'>-2$ are always satisfied.

\smallskip

\item Actually, one only need to require \eqref{eq:V,iA petit 1} and/or \eqref{eq:V,iA petit 2} for $\beta$ near $1$ and $\alpha$ large enough.

\smallskip

\item We can replace \eqref{eq:V,iA petit 1} by the similar inequality
\begin{multline}\label{eq:V,iA petit 1'}
(\psi_F,[V,iA_D]\psi_F)\geq\delta(\psi_F,\Delta\psi_F)+\delta' (\psi_F,(\nabla F_\beta)^2\psi_F)+\delta''\|g^{1/2}A_D\psi_F\|^2\\
+(\sigma\alpha+\sigma')\|\psi_F\|^2\tag{\ref{eq:V,iA petit 1}'}
\end{multline}
with $\delta>-2$, $\delta+\delta'>-2$ and $\delta''>-4$. \eqref{eq:V,iA petit 2} may be replaced by
\begin{multline}\label{eq:V,iA petit 2'}
(\psi_F,[V,iA_D]\psi_F)\geq\delta(\psi_F,\Delta\psi_F)+\delta' (\psi_F,(\nabla F_\beta)^2\psi_F)+\delta''\|g^{1/2}A_D\psi_F\|^2\\
+(\sigma\alpha+\sigma')\|\psi_F\|^2\tag{\ref{eq:V,iA petit 2}'}
\end{multline} 
with $\delta>-2$, $\delta+(1+\|\jap{p}^{-1}V\jap{p}^{-1}\|)\|\delta'>-2$ and $\delta''>-4$ and the both Theorems remain true. This enlarges the class of admissible potentials (see Section \ref{s:examples}).

\smallskip

\item Let $V_{sr}$ and $V_{lr}$ be two functions such that there is $\rho_{sr},\rho_{lr},\rho'_{lr}>0$ and\\
 $|x|^{1+\rho_{sr}}V_{sr}(x)$, $|x|^{\rho_{lr}}V_{lr}(x)$ and $|x|^{\rho'_{lr}}x\nabla V_{lr}(x)$ are bounded. Suppose that $V$ satisfies assumptions of Theorem \ref{th:abs 1} (respectively Theorem \ref{th:abs 2}). \\
 Then $\tilde{V}=V+V_{sr}+V_{lr}$ satisfies assumptions of Theorem \ref{th:abs 1} (respectively Theorem \ref{th:abs 2}) too. To see that, notice that $V_{lr}$ and $V_{sr}$ are compact on $\ch^1$ and are of class $C^1(A_D,\ch^1,\ch^{-1})\cap C^1_\mathfrak{u}(A_u,\ch^1,\ch^{-1})$ for all $u\in\cu$ and that there is $\sigma_1,\sigma_2\in\R$ such that 
\begin{eqnarray*}
(\psi_F,[V_{lr},iA_D]\psi_F)&\geq& \sigma_1\|\psi_F\|^2 \\
 (\psi_F,[V_{sr},iA_D]\psi_F)&\geq&  -\epsilon (\psi_F,\Delta\psi_F)+\frac{\sigma_2}{\epsilon}\|\phi\|^2
 \end{eqnarray*}
 for all $\epsilon>0$. In particular, we can choose $\epsilon>0$ small enough such that, if $V$ satisfies \eqref{eq:V,iA petit 1} (respectively \eqref{eq:V,iA petit 2}), $\tilde{V}$ satisfies \eqref{eq:V,iA petit 1} (respectively \eqref{eq:V,iA petit 2}).
 
 \smallskip
 
 \item If $V$ can be seen as the derivative of a bounded function (the derivative of a short range potential for example), the conclusion of Theorem \ref{th:abs 2} is still true if one assume \eqref{eq:V,iA petit 2} and if one replaces the condition $\delta+(1+\|\jap{p}^{-1}V\jap{p}^{-1}\|)\delta'>-2$ by the weaker condition $\delta+\delta'>-2$.

\smallskip

\item For $\zeta,\theta\in\R$, $k\in\R^*$ and $w\in\R$, let
\[V(x)=w(1-\kappa(|x|))\frac{\sin(|x|^\zeta)}{|x|^\theta},\]
 with $\kappa\in C^\infty_c(\R,\R)$ with $\kappa(|x|)=1$ if $|x|<1$, $0\leq\kappa\leq1$. As for the sub-exponential bounds, we can see that if $\theta>0$ and $\zeta<\theta$ or $\theta>1$, then Corollary \ref{c:FH} applies. In \cite{JMb}, they showed that if $\zeta>1$ and $\theta>1/2$, $H=\Delta+V$ has no positive eigenvalues. Moreover, they claimed that if $\theta>0$, $\zeta+\theta>2$ and $|w|$ is small enough then $V$ satisfies \eqref{eq:FH} and so Corollary \ref{c:FH} applies. But their proof is not sufficient if $\theta\leq 1/2$ because we need to have the commutator bounded from $\ch^1$ to $\ch^{-1}$ and in this case, it is only bounded from $\ch^2$ to $\ch^{-2}$. Here, we can show a better result: if $\zeta+\theta>2$, $V:\ch^1\rightarrow\ch^{-1}$ is compact, of class $C^{1,1}(A_u,\ch^1,\ch^{-1})$ for all u bounded and satisfies \eqref{eq:V,iA petit 2} for all $w$. Therefore $V$ satisfies assumptions of Theorem \ref{th:abs 2} for all $w\in\R$. In particular, if $\theta<0$, $V$ is not bounded. Moreover, if $\zeta+\theta=2$ and $1/2\geq\theta$, then $V$ satisfies assumptions of Theorem \ref{th:abs 1} for $|w|$ sufficiently small. All this results are collected in Proposition \ref{prop:oscillant}.
 
 \smallskip
 
 \item Let \[V(x)=w(1-\kappa(|x|)) e^{3|x|/4}\sin(e^{|x|})\]
with $w\in\R$, $\kappa\in C^\infty_c(\R,\R)$, $0\leq\kappa\leq1$ and $\kappa(|x|)=1$ if $|x|<1$. For all $w\in\R$,  we can apply Theorem \ref{th:abs 2} (see Lemma \ref{l:ex expo}). Moreover, since $V$ is not $\Delta$-bounded, we cannot apply Corollary \ref{c:FH}.
\end{enumerate}

Now, we assume that $V$ has more regularity with respect to $A_u$. In this case, we can prove a limiting absorption principle and we can show that the boundary values of the resolvent will be a smooth function outside the eigenvalues. To this end, we need to use the
\emph{H\"older-Zygmund continuity classes} denoted $\Lambda^\sigma$. The definition of this particular classes of regularity is recalled on Section \ref{ss:Regularity}. We also need some weighted Sobolev space, denoted $\ch^t_s$ which are defined on Section \ref{ss:Notation}
\begin{theorem}[\cite{Ma1}, Theorem (3.3)]\label{th: Theorem 3.3}
Let $R(z)=(H-z)^{-1}$ be the resolvent operator associate to $H$.
 Let $V:\ch^1\to\ch^{-1}$ be a compact
  symmetric operator.  Suppose that there is $u\in\cu$ and $s>1/2$
  such that $V$ is of class
  $\Lambda^{s+1/2}(A_u,\ch^1,\ch^{-1})$. Then the limits
\begin{equation}\label{eq:lap}
R(\lambda\pm\i0):=\w*lim_{\mu\downarrow0}R(\lambda\pm i \mu)
\end{equation}
exist, locally uniformly in
$\lambda\in (0,+\infty)$ outside the eigenvalues of $H$. Moreover, the functions
\begin{equation}\label{eq:reg}
\lambda\mapsto R(\lambda\pm\i0)\in B(\ch^{-1}_s,\ch^1_{-s})
\end{equation}
are locally of class $\Lambda^{s-1/2}$ on $(0,+\infty)$ outside the eigenvalues
of $H$. 
\end{theorem}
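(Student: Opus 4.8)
The plan is to deduce this from the abstract conjugate operator (Mourre) method with $A_u$ as conjugate operator, working in the form/scale setting $\ch^1\subset L^2(\R^\nu)\subset\ch^{-1}$ as in \cite{ABG}. The first step is to record the regularity of $H$ with respect to $A_u$. Since for $u\in\cu$ the unitary group generated by $A_u$ leaves $\ch^1$ invariant and, as recalled in the introduction, $[\Delta,iA_u]=2p\cdot u(p)=2|p|^2\lambda(p)$ extends to a bounded operator $\ch^1\to\ch^{-1}$ that is again smooth under $A_u$, the Laplacian is of class $C^\infty(A_u,\ch^1,\ch^{-1})$. Together with the hypothesis $V\in\Lambda^{s+1/2}(A_u,\ch^1,\ch^{-1})$ this gives $H=\Delta+V\in\Lambda^{s+1/2}(A_u,\ch^1,\ch^{-1})$; in particular, since $s+1/2>1$, $H$ is of class $C^{1,1}(A_u)$, which is the optimal class for the limiting absorption principle.

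Second, I would establish the Mourre estimate on all of $(0,+\infty)$. Because $\lambda$ is bounded and strictly positive, for each compact $J\subset(0,+\infty)$ there is $c_J>0$ with $E_\Delta(J)\,2|p|^2\lambda(p)\,E_\Delta(J)\geq c_J E_\Delta(J)$, i.e.\ a strict Mourre estimate for $\Delta$ on $J$. Since $V:\ch^1\to\ch^{-1}$ is compact, $(H-z)^{-1}-(\Delta-z)^{-1}$ is compact, so $E_H(J)$ and $E_\Delta(J)$ differ by a compact operator and, by the same token, the localized commutator form $E_H(J)[V,iA_u]E_H(J)$ is compact; hence
\[
E_H(J)[H,iA_u]E_H(J)\ \geq\ c_J\,E_H(J)+K_J
\]
with $K_J$ compact. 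The standard argument — the point spectrum of $H$ in $(0,+\infty)$ being discrete of finite multiplicity by the Virial theorem — then upgrades this to a strict Mourre estimate at every $\lambda\in(0,+\infty)$ outside the eigenvalues of $H$.

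Third, with $H$ of class $\Lambda^{s+1/2}(A_u,\ch^1,\ch^{-1})$ and a strict Mourre estimate available, I would invoke the sharp abstract results on boundary values of the resolvent, namely the mechanism "$\Lambda^{s+1/2}$ regularity of $H$ $\Rightarrow$ $\Lambda^{s-1/2}$ H\"older continuity of $\lambda\mapsto R(\lambda\pm i0)$" (cf.\ \cite{ABG}, Ch.~7): for $s>1/2$ the limits $R(\lambda\pm i0)$ exist in $B(\ch^{-1}_s,\ch^1_{-s})$, locally uniformly in $(0,+\infty)$ away from the eigenvalues of $H$, and there $\lambda\mapsto R(\lambda\pm i0)$ is locally of class $\Lambda^{s-1/2}$. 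The loss of one unit of regularity is the price of the two weights $\jap{A_u}^{-s}$ needed to give the boundary values a meaning.

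The hardest part is this last step: one must transport the abstract H\"older-class machinery — normally stated for an operator on a fixed Hilbert space with $\jap{A}$-weights — to the present situation, where $H$ acts $\ch^1\to\ch^{-1}$ and the weights are the weighted Sobolev spaces $\ch^t_s$. Concretely, one has to check that conjugating by $(\Delta+1)^{\pm 1/2}$ and commuting $\jap{A_u}^{-s}$ through $(\Delta+1)^{\pm 1/2}$ preserves the relevant regularity (using $\Delta\in C^\infty(A_u)$ together with interpolation and commutator estimates), and that the Besov-type spaces built from $A_u$ compare correctly with $\ch^{\mp 1}_{\pm s}$. By contrast, the Mourre estimate itself is comparatively routine here, thanks to the explicit form of $[\Delta,iA_u]$ and the compactness of $V:\ch^1\to\ch^{-1}$.
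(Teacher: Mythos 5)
The paper does not actually prove this statement: Theorem \ref{th: Theorem 3.3} is imported verbatim from \cite{Ma1}, so there is no internal proof to compare against; your outline follows the route that the cited source (and Corollary \ref{c:resolvent}) presupposes, namely conjugate-operator theory for $A_u$ in the form setting $\cg=\ch^1$, with $\Lambda^{s+1/2}\subset C^{1,1}\subset C^1_{\mathfrak{u}}$. Two points must nevertheless be named. First, your derivation of the Mourre estimate contains a faulty inference: compactness of $V:\ch^1\to\ch^{-1}$ does not, ``by the same token'' as the compactness of the resolvent difference, make $E_H(J)[V,iA_u]E_H(J)$ compact --- a compact $V$ that is merely of class $C^1(A_u,\ch^1,\ch^{-1})$ can have a bounded but non-compact commutator. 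The correct argument uses the hypothesis itself: $\Lambda^{s+1/2}(A_u,\ch^1,\ch^{-1})\subset C^1_{\mathfrak{u}}(A_u,\ch^1,\ch^{-1})$ for $s>1/2$, and for a compact $V$ of class $C^1_{\mathfrak{u}}$ the commutator $[V,iA_u]$ is the norm limit in $\cb(\ch^1,\ch^{-1})$ of the compact difference quotients $\tau^{-1}(e^{i\tau A_u}Ve^{-i\tau A_u}-V)$, hence compact; this is exactly the observation recorded in comment (c) following Theorem \ref{th:exp bound 2}. With that repair, the strict Mourre estimate on $(0,+\infty)$ away from the (discrete, finite-multiplicity) eigenvalues follows as you say.

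Second, the conclusion is formulated in the weighted Sobolev spaces $\ch^{-1}_s$, $\ch^1_{-s}$ and with the precise exponent $s-1/2$, whereas the abstract results of \cite{ABG}, Ch.~7 give boundary values in spaces weighted by $\jap{A_u}$ (or the associated Besov spaces). You correctly identify this transfer as the hardest part, but you do not carry it out, and it is the actual technical content of \cite{Ma1}: one must show that $\ch^{-1}_s$ embeds into the relevant $A_u$-weighted space, e.g.\ via boundedness of $\jap{A_u}^{s}\jap{q}^{-s}$ on the appropriate scale, which for $u\in\cu$ rests on the representation $A_u=u(p)\cdot q-\frac{i}{2}(\Div u)(p)$ with $u$ and $\Div u$ bounded (so that $A_u\jap{q}^{-1}$ is bounded), interpolation for fractional $s$, iterated commutator estimates for $s>1$, and compatibility with the $\ch^{\pm1}$ scale (your conjugation by $(\Delta+1)^{\pm1/2}$). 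As a proof, your text leaves this step as an assertion; as a plan, it is the right one and coincides with the strategy of the cited reference.
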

 Since $\Lambda^{s+1/2}(A_u)\subset C^1_\mathfrak{u}(A_u)$ for all $s>1/2$, by combining Theorems \ref{th:exp bound 2}, \ref{th:abs 2} and \ref{th: Theorem 3.3}, we have the following
\begin{corollary}\label{c:resolvent}
Let $V:\ch^1\rightarrow\ch^{-1}$ be a compact symmetric potential and $s>1/2$. If there is $u\in \cu$ such that $V$ is of class $\Lambda^{s+1/2}(A_u,\ch^1,\ch^{-1})$, and if \eqref{eq:V,iA petit 2} is satisfied, then the limits
\begin{equation}
R(\lambda\pm\i0):=\w*lim_{\mu\downarrow0}R(\lambda\pm i \mu)
\end{equation}
exist locally uniformly in
$\lambda\in (0,+\infty)$ and
\begin{equation}
\lambda\mapsto R(\lambda\pm i0)\in B(\ch^{-1}_s,\ch^1_{-s})
\end{equation}
are of class $\Lambda^{s-1/2}$ on $(0,+\infty)$.
\end{corollary}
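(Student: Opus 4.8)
The strategy is to read off the conclusion from Theorem \ref{th: Theorem 3.3}, which already gives exactly the stated existence and regularity of the boundary values, but only on $(0,+\infty)$ deprived of the eigenvalues of $H$; so the real task is to show, by means of Theorems \ref{th:exp bound 2} and \ref{th:abs 2}, that $H$ has no eigenvalue in $(0,+\infty)$, after which the exceptional set becomes empty.

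First I would collect the regularity inputs. As recalled in the text, for $s>1/2$ one has $\Lambda^{s+1/2}(A_u,\ch^1,\ch^{-1})\subset C^1_\mathfrak{u}(A_u,\ch^1,\ch^{-1})\subset C^1(A_u,\ch^1,\ch^{-1})$, so comment (b) after Theorem \ref{th:exp bound 2} gives that $\jap p^{-1}[V,iA_u]\jap p^{-1}$ is bounded, and $V:\ch^1\to\ch^{-1}$, being compact, is bounded with relative bound $0<1$; thus the hypotheses of Theorem \ref{th:exp bound 2} are met. Moreover, by comment (c) after Theorem \ref{th:exp bound 2}, compactness of $V$ makes $(\Delta+1)^{-1/2}[V,iA_u](\Delta+1)^{-1/2}$ compact, so the Mourre estimate with respect to $A_u$ holds on every compact subset of $(0,+\infty)$; equivalently, $\ce_u(H)\cap(0,+\infty)=\emptyset$.

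Next I would exclude positive eigenvalues. Let $H\psi=E\psi$ with $E>0$. By Theorem \ref{th:exp bound 2}, $S_E$ is either $+\infty$ or a point of $\ce_u(H)$; since $S_E\geq E>0$ while $\ce_u(H)$ does not meet $(0,+\infty)$, we must have $S_E=+\infty$, i.e. $\exp(\alpha\jap x^\beta)\psi\in L^2(\R^\nu)$ for all $\alpha>0$ and $\beta\in(0,1)$ (this is precisely the conclusion recorded in comment (c) following Theorem \ref{th:exp bound 2}). Hence $\psi$ satisfies all the hypotheses of Theorem \ref{th:abs 2}: $V:\ch^1\to\ch^{-1}$ is bounded, $\psi_F=\exp(F_\beta(q))\psi\in L^2(\R^\nu)$ for all $\alpha>0$, $\beta\in(0,1)$, and \eqref{eq:V,iA petit 2} is assumed to hold (together with the existence of constants $\delta>-2$, $\delta',\sigma,\sigma'$ with $\delta+(1+\|\jap p^{-1}V\jap p^{-1}\|)\delta'>-2$). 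Theorem \ref{th:abs 2} then yields $\psi=0$, so $H$ has no eigenvalue in $(0,+\infty)$.

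Finally I would invoke Theorem \ref{th: Theorem 3.3}: its hypotheses --- $V:\ch^1\to\ch^{-1}$ compact symmetric and of class $\Lambda^{s+1/2}(A_u,\ch^1,\ch^{-1})$ with $s>1/2$ --- coincide with those of the corollary, so the boundary values $R(\lambda\pm i0)$ exist locally uniformly and $\lambda\mapsto R(\lambda\pm i0)\in B(\ch^{-1}_s,\ch^1_{-s})$ is locally of class $\Lambda^{s-1/2}$, on $(0,+\infty)$ outside the eigenvalues of $H$; by the preceding step that set is empty, so both assertions hold on all of $(0,+\infty)$, which is the corollary. The only step with genuine content is the exclusion of positive eigenvalues, and the subtlety there is twofold: one must know that the compactness of $V$ really delivers the Mourre estimate throughout $(0,+\infty)$, so that $S_E$ --- which is always $\geq E>0$ --- cannot be a point of $\ce_u(H)$ and is therefore forced to be $+\infty$; and one must read the hypothesis ``\eqref{eq:V,iA petit 2} is satisfied'' as including the accompanying admissibility conditions on $\delta,\delta'$ that Theorem \ref{th:abs 2} requires.
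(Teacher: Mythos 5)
Your argument is correct and is exactly the combination the paper intends: the paper justifies the corollary by the single remark that, since $\Lambda^{s+1/2}(A_u)\subset C^1_\mathfrak{u}(A_u)$, one combines Theorem \ref{th:exp bound 2} (unlimited sub-exponential bounds, thanks to the Mourre estimate holding on all of $(0,+\infty)$ by compactness of the commutator) with Theorem \ref{th:abs 2} (hence no positive eigenvalues) and then applies Theorem \ref{th: Theorem 3.3} with an empty exceptional set. Your write-up fills in precisely these steps, including the correct reading of the hypothesis \eqref{eq:V,iA petit 2} with its admissibility conditions on $\delta,\delta'$, so there is nothing to add.
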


 
 The paper is organized as follows. In Section \ref{s:notation}, we will give some notations and we recall some basic fact about regularity. In Section \ref{s: exp bounds}, we will prove Theorem \ref{th:exp bound 1} and  Theorem \ref{th:exp bound 2}. In Section \ref{s:abs vp}, we will prove Theorem \ref{th:abs 1} and Theorem \ref{th:abs 2}. In Section \ref{s:examples}, we will give some explicit classes of potential for which we can apply our main results. Finally in Appendix \ref{s: H-S formula}, we will recall the Helffer-Sj\"ostrand formula and some properties of this formula that we will use in the proof of our main Theorems.
   
\section{Notations and basic notions}\label{s:notation}

\subsection{Notation}\label{ss:Notation} Let $X=\R^\nu$ and for $s\in\R$ let $\ch^s$ be the usual Sobolev space on $X$ with $\ch^0=\ch=L^2(X)$ whose norm is denoted $\|\cdot\|$. We are mainly interested in the space $\ch^{1}$ defined
by the norm $\|f\|_1^2=\int\left(|f(x)|^2+|\nabla f(x)|^2\right)\d x$ and
its dual space $\ch^{-1}$.

We denote $q_j$ the operator of multiplication by the coordinate $x_j$
and $p_j=-\i\partial_j$ considered as operators in $\ch$. For $k\in X$ we denote
$k\cdot q=k_1q_1+\dots+k_\nu q_\nu$. If $u$ is a
measurable function on $X$ let $u(q)$ be the operator of
multiplication by $u$ in $\ch$ and $u(p)=\cf^{-1}u(q)\cf$, where $\cf$
is the Fourier transformation:
\[
(\cf f)(\xi)=(2\pi)^{-\frac{\nu}{2}} \int \rme^{-i  x\cdot\xi} u(x) \d x .
\]
If there is no ambiguity we keep the same notation for these operators
when considered as acting in other spaces.  
If $u$ is a $C^\infty$ vector fields with all the derivates bounded, we denote by $A_u$ the symmetric operator:

\begin{equation}
A_u=\frac{1}{2}(q\cdot u(p)+u(p)\cdot q)=u(p)\cdot q+\frac{i }{2}(\mathrm{div} u)(p).
\end{equation}
Notice that $A_u$ is essentially self-adjoint (see \cite[Proposition 4.2.3]{ABG}). Since we will use vector fields $u$ which have a particular form, we use the space $\cu$ define by
\begin{definition}\label{def: classe U}
We define $\cu$ the space of $C^\infty$ vector fields $u$ with all derivates bounded such that there is a  strictly positive bounded function $\lambda:X\rightarrow\R$ of class $C^\infty$  with $u(x)=x\lambda(x)$ for all $x\in X$.
\end{definition}

Let $
A_D=\frac{1}{2}(p\cdot q+q\cdot p)
$ be the generator of dilations.

As usual, we denote  $\langle x\rangle=(1+|x|^2)^{1/2}$. Then $\jap{q}$ is the operator of
multiplication by the function $x\mapsto\jap{x}$ and
$\jap{p}=\cf^{-1}\jap{q}\cf$.  For real $s,t$ we denote $\ch^t_s$ the
space defined by the norm
\begin{equation}\label{eq:K}
\|f\|_{\ch^t_s}= \|\jap{q}^s f\|_{\ch^t}= \|\jap{p}^t \jap{q}^s f\|.
\end{equation}
Note that the norm $\|f\|_{\ch^t_s}$ is equivalent to the norm $\| \jap{q}^s\jap{p}^t f\|$ and that the adjoint space of $\ch^t_s$ may be identified with
$\ch^{-t}_{-s}$.

\smallskip

We denote $\Delta=p^2$ the non negative Laplacian operator, i.e. for all $\phi\in \ch^2$, we have 
\[\Delta \phi=-\sum_{i=1}^n \frac{\partial^2 \phi}{\partial x_i^2}.\]

For $I$ a Borel subset of $\R$, we denote $E(I)$ the spectral mesure of $H$ on $I$.

\begin{definition}\label{def: Mourre est}
Let $A$ be a self adjoint operator on $L^2(\R^\nu)$. Assume that $H$ is of class $C^1(A)$.
We say that $H$ satisfies the Mourre estimate at $\lambda_0$ with respect to the conjugate operator $A$ if there exists a non-empty open set $I$ containing $\lambda_0$, a real $c_0>0$ and a compact operator $K_0$ such that 

\begin{equation}\label{eq:Mourre estimate}
E(I)[H,iA]E(I)\geq c_0 E(I)+K_0
\end{equation}
\end{definition}

We denote $\ce_u(H)$ the complement of the set of $\lambda_0$ for which the Mourre estimate is satisfied with respect to $A_u$.

In the Helffer-Sj\"ostrand formula (Appendix \ref{s: H-S formula}), there is a term of rest which appears. To control it we define the following space of application:
\begin{definition}\label{def:S rho}
For $\rho\in\R$, let $\cs^\rho$ be the class of the function $\varphi\in C^\infty(\R^\nu,\C)$ such that

\begin{equation}
\forall k\in\N, \qquad C_k(\varphi)\coloneq \sup_{ \substack{t\in\R^\nu  \\ |\alpha|=k}} \langle t\rangle^{-\rho+k}|\partial^\alpha_t \varphi(t)|<\infty.
\end{equation}
\end{definition}

Note that $C_k$ define a semi-norm for all $k$.

\subsection{Regularity}\label{ss:Regularity}

Let $F', F''$ be to Banach space and $T:F'\rightarrow F''$ a bounded operator.

Let $A$ a self-adjoint operator such that the unitary group generated by $A$ leaves $F'$ and $F''$ invariants.

Let $k\in\N$. We said that $T\in C^k(A,F',F'')$ if, for all $f\in F'$, the map \\
$\R\ni t\mapsto e^{itA}Te^{-itA}f$ has the usual $C^k$ regularity. 

We said that $T\in C^k_\mathfrak{u}(A,F',F'')$ if $T\in C^k(A,F',F'')$ and all the derivatives of the map $\R\ni t\mapsto e^{itA}Te^{-itA}f$ are norm-continuous function. The following characterisation is available:

\begin{proposition}[Proposition 5.1.2, \cite{ABG}]
 $T\in C^1(A,F',F'')$ if and only if $[T,A]=TA-AT$ has an extension in $\cb(F',F'')$.
\end{proposition}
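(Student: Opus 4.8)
The plan is to prove the two implications of the equivalence separately, both built on differentiating the conjugated operator $T_t := \e^{\i tA}T\e^{-\i tA}\in\cb(F',F'')$ and on the standing hypothesis that $\{\e^{\i tA}\}_{t\in\R}$ restricts to a strongly continuous one-parameter group on each of $F'$ and $F''$. Let $\cd'\subset F'$ be the domain of the generator of that group in $F'$ and $\cd_*\subset(F'')^*$ the domain of the generator of the dual group; both are dense, invariant cores. Recall that by definition $T\in C^1(A,F',F'')$ means that $t\mapsto T_tf$ is a $C^1$ map $\R\to F''$ for every $f\in F'$.

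For the implication "$C^1 \Rightarrow$ bounded commutator", I would set $Bf := \frac{\d}{\d t}\big|_{t=0}T_tf$ and first check that $B\in\cb(F',F'')$: the difference quotients $\tau^{-1}(T_\tau-T)$ lie in $\cb(F',F'')$ and converge pointwise on $F'$ as $\tau\to 0$, so by the uniform boundedness principle they are uniformly bounded, and the pointwise limit $B$ inherits the bound. Then I would identify $B$ with $\i$ times the commutator form: for $f\in\cd'$ and $\varphi\in\cd_*$ the scalar function $t\mapsto\langle\varphi,T_tf\rangle$ is differentiable, and applying the product rule (using invariance of $f$ and of $\varphi$ under the respective groups) gives $\langle\varphi,Bf\rangle = \i\langle A\varphi,Tf\rangle-\i\langle\varphi,TAf\rangle$, i.e.\ $B$ represents $\i[A,T]$ as a form on $\cd_*\times\cd'$. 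Since $B$ is bounded, the form $[A,T]$ — hence also $[T,A]=TA-AT$ — extends to a bounded operator $F'\to F''$.

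For the converse "bounded commutator $\Rightarrow C^1$", suppose $[T,A]$ extends to $C\in\cb(F',F'')$. The formal computation $\frac{\d}{\d t}\,\e^{\i tA}T\e^{-\i tA} = \i\,\e^{\i tA}(AT-TA)\e^{-\i tA}$ motivates the integral representation $T_tf - Tf = -\i\int_0^t \e^{\i sA}C\e^{-\i sA}f\,\d s$. I would establish this first for $f\in\cd'$, where the formal manipulation is legitimate (there $\e^{-\i sA}f\in\cd'$, one may pair against $\varphi\in\cd_*$, integrate the resulting scalar identity, and use that the commutator form evaluated on such vectors coincides with the bounded operator $C$), and then extend it to all $f\in F'$ by density, since the right-hand side is well-defined and continuous in $f$ once $C$ is bounded and the group is strongly continuous. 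With the identity valid for all $f\in F'$, the right-hand side is a $C^1$ function of $t$ — its integrand $s\mapsto\e^{\i sA}C\e^{-\i sA}f$ is norm-continuous into $F''$ — with derivative $-\i\,\e^{\i tA}C\e^{-\i tA}f$, so $t\mapsto T_tf$ is $C^1$ and $T\in C^1(A,F',F'')$.

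The step I expect to be the main obstacle is the rigorous justification of the formal commutator identity in both directions: making precise that "the commutator has a bounded extension" is a statement about a sesquilinear form rather than an operator composition (one never knows a priori that $Tf\in\cd(A)$), and carrying the density/approximation argument through. In a general Banach-space setting this also forces the use of the sun-dual of the group on $F''$ in place of the full dual; but in every application made in this paper $F'$ and $F''$ are (reflexive) Sobolev spaces, so these points are routine, which is why the proposition is quoted from \cite{ABG} rather than reproved here.
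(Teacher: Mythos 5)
Your argument is correct and is essentially the standard proof of this characterisation: the paper itself gives no proof but quotes it from \cite{ABG} (Proposition 5.1.2), whose argument is precisely the one you outline — uniform boundedness for the strong derivative in one direction, and the Duhamel-type integral representation $T_t f-Tf=-\i\int_0^t \e^{\i sA}C\e^{-\i sA}f\,\d s$, first on the generator's domain and then by density, in the other. Your closing caveat about the dual group is also well placed, and indeed harmless here since in all of the paper's applications $F'$ and $F''$ are Sobolev (hence reflexive Hilbert) spaces.
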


For $k>1$, $T\in C^k(A,F',F'')$ if and only if $T\in C^1(A,F',F'')$ and $[T,A]\in C^{k-1}(A,F',F'')$.


We can defined another class of regularity called the $C^{1,1}$ regularity:

\begin{proposition}
We said that $T\in C^{1,1}(A,F',F'')$ if and only if 
\[
\int_0^1\|T_\tau+T_{-\tau}-2T\|_{\cb(F',F'')} \,\frac{\d\tau}{\tau^2}<\infty, 
\]
 where $T_\tau=\rme^{\i\tau A_u}T\rme^{-\i\tau A_u}$.
\end{proposition}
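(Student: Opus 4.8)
The displayed statement is, in effect, the \emph{definition} of the class $C^{1,1}(A,F',F'')$ as introduced in \cite[Section~5.2]{ABG}; the ``if and only if'' merely records that one adopts the Dini-type integral as the defining criterion. So in the strict sense there is nothing to prove, and the clean presentation is to refer to \cite[Section~5.2]{ABG}. What I would nevertheless verify, to see that the definition is consistent with the regularity scale already introduced, is that the set
\[
\Bigl\{\,T\in\cb(F',F'') \ :\ \int_0^1\|T_\tau+T_{-\tau}-2T\|_{\cb(F',F'')}\,\tfrac{\d\tau}{\tau^2}<\infty\,\Bigr\}
\]
is a linear subspace of $\cb(F',F'')$ satisfying $C^2(A,F',F'')\subset C^{1,1}(A,F',F'')\subset C^1(A,F',F'')$.

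The plan for these verifications is the standard real-interpolation computation. Linearity I would get immediately: $T\mapsto T_\tau$ is linear, near $\tau=1$ the integrand is dominated by $4\|T\|_{\cb(F',F'')}$, and near $\tau=0$ one uses the triangle inequality on the integrand. For $C^2(A,F',F'')\subset C^{1,1}(A,F',F'')$, if $T\in C^2$ then $r\mapsto T_r$ is twice differentiable with $\tfrac{\d^2}{\d r^2}T_r=\rme^{\i rA}[[T,\i A],\i A]\rme^{-\i rA}$, whence, from $T_\tau+T_{-\tau}-2T=\int_0^\tau\!\!\int_{-s}^{s}\tfrac{\d^2}{\d r^2}T_r\,\d r\,\d s$, one obtains $\|T_\tau+T_{-\tau}-2T\|_{\cb(F',F'')}\le C\tau^2$ with $C=\|[[T,\i A],\i A]\|_{\cb(F',F'')}$, so the integral converges. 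For $C^{1,1}(A,F',F'')\subset C^1(A,F',F'')$ I would invoke a Marchaud-type inequality: the first modulus $\omega_1(\tau):=\sup_{|s|\le\tau}\|T_s-T\|_{\cb(F',F'')}$ is dominated by a constant times $\tau\bigl(\|T\|_{\cb(F',F'')}+\int_\tau^1\|T_s+T_{-s}-2T\|_{\cb(F',F'')}\,s^{-2}\,\d s\bigr)$, so the integral hypothesis forces $\omega_1(\tau)=O(\tau)$; then a dyadic telescoping of $\tau^{-1}(T_\tau-T)$ along the scales $\tau/2^k$, with increments controlled by the dyadic second differences, shows this difference quotient is Cauchy in $\cb(F',F'')$ as $\tau\downarrow0$, so $[T,\i A]$ extends boundedly and $T\in C^1(A,F',F'')$ by the preceding Proposition.

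The one point requiring care — and the only real obstacle — is that $\rme^{\i\tau A}$ is only assumed to restrict to a $C_0$-group on $F'$ and on $F''$, and need not be norm-continuous there; hence the elementary identities for $T_\tau$, in particular the integral representation of the second difference, must first be written as strong identities on $F'$ (or on a common core) and only afterwards estimated in the $\cb(F',F'')$-norm. Once that bookkeeping in the non-reflexive, operator-valued setting is handled, the argument reduces to the classical Besov/interpolation characterisation carried out in full in \cite[Chapter~5]{ABG}.
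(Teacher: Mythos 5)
You are correct: this ``Proposition'' is really just the definition of the class $C^{1,1}(A,F',F'')$ taken from \cite{ABG}, the ``if and only if'' being purely declarative, and the paper accordingly supplies no proof --- so your reading matches the paper's treatment exactly. Your supplementary sketch of the embeddings $C^{2}(A,F',F'')\subset C^{1,1}(A,F',F'')\subset C^{1}(A,F',F'')$ is sound (and the caveat about the group being only strongly continuous on $F'$, $F''$ is well taken), but it addresses the separate inclusion proposition stated later in the same subsection, which the paper likewise delegates to \cite{ABG}.
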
 

An easier result can be used:
\begin{proposition}[Proposition 7.5.7 from \cite{ABG}]
Let $A$ be a self-adjoint operator. Let $\cg$ be a Banach space and let $\Lambda$ be a closed densely defined operator in $\cg^*$ with domain included in $\cd(A,\cg^*)$ and such that $-i r$ belongs to the resolvent set of $\Lambda$ and $r\|(\Lambda+i r)^{-1}\|_{\cb(\cg^*)}\leq C\in\R$ for all $r>0$. Let $\xi\in C^\infty(X)$ such that \\$\xi(x)=0$ if
$|x|<1$ and $\xi(x)=1$ if $|x|>2$.
If $T:\cg\rightarrow\cg^*$ is symmetric, of class $C^1(A,\cg,\cg^*)$ and satisfies
\[\int_1^\infty \|\xi(\Lambda/r)[T,iA]\|_{\cb(\cg,\cg^* )}
  \frac{\d r}{r} <\infty\]
  then $T$ is of class $C^{1,1}(A,\cg,\cg^*)$.
\end{proposition}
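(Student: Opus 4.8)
(The statement is \cite[Proposition~7.5.7]{ABG}; here is how one would argue.) Throughout put $B:=[T,\i A]$, which lies in $\cb(\cg,\cg^*)$ by the characterisation of $C^1(A,\cg,\cg^*)$ recalled above, and write $X_t:=\e^{\i tA}X\e^{-\i tA}$ for a bounded operator $X$; this is meaningful since the $C_0$-group generated by $A$ leaves $\cg$ and $\cg^*$ invariant and is uniformly bounded on each for $|t|\le1$. The plan has two steps: first reduce the $C^{1,1}$ property to a Dini-type control of $B$ along the group, then extract that control from the $\Lambda$-localised integrability hypothesis.

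\emph{Step 1 (reduction).} Since $T\in C^1(A,\cg,\cg^*)$, the map $t\mapsto T_t$ is strongly $C^1$ on $\cg$, with derivative $[\i A,T]_t=-B_t$; hence for $0<\tau\le1$
\[
T_\tau+T_{-\tau}-2T=-\int_0^\tau\bigl(B_s-B_{-s}\bigr)\,\d s
\]
as operators from $\cg$ to $\cg^*$, and interchanging the two integrations gives
\[
\int_0^1\|T_\tau+T_{-\tau}-2T\|_{\cb(\cg,\cg^*)}\,\frac{\d\tau}{\tau^2}\ \le\ \int_0^1\frac{\|B_s-B_{-s}\|_{\cb(\cg,\cg^*)}}{s}\,\d s .
\]
By the characterisation of $C^{1,1}(A,\cg,\cg^*)$ recalled above, it therefore suffices to prove $\int_0^1 s^{-1}\|B_s-B\|_{\cb(\cg,\cg^*)}\,\d s<\infty$ (the contribution of $B_{-s}$ is handled by the same argument, since every hypothesis is invariant under $A\mapsto-A$).

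\emph{Step 2 ($\Lambda$-localisation).} I would localise $B$ at the scale $r=1/s$. With $\eta:=1-\xi$, so that $\eta(\Lambda/r)$ is a cut-off to $\{\Lambda\lesssim r\}$, split
\[
B_s-B=\bigl[\bigl(\xi(\Lambda/r)B\bigr)_s-\xi(\Lambda/r)B\bigr]+\bigl[\bigl(\eta(\Lambda/r)B\bigr)_s-\eta(\Lambda/r)B\bigr],
\]
and call the two bracketed terms $\mathrm{(I)}$ and $\mathrm{(II)}$. For $\mathrm{(I)}$, uniform boundedness of the group over $|s|\le1$ gives $\|\mathrm{(I)}\|_{\cb(\cg,\cg^*)}\le C\,\|\xi(\Lambda/r)B\|_{\cb(\cg,\cg^*)}$, and the substitution $r=1/s$ turns the part of $\int_0^1 s^{-1}\|B_s-B\|\,\d s$ coming from $\mathrm{(I)}$ into $\int_1^\infty\|\xi(\Lambda/r)[T,\i A]\|_{\cb(\cg,\cg^*)}\,\frac{\d r}{r}$, which is finite by assumption. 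For $\mathrm{(II)}$ the target is $\|\mathrm{(II)}\|_{\cb(\cg,\cg^*)}\lesssim s\,\|B\|_{\cb(\cg,\cg^*)}$ uniformly for $0<sr\le1$, for then $\int_0^1 s^{-1}\cdot s\,\d s<\infty$ and the proposition follows upon adding the two contributions.

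The estimate for $\mathrm{(II)}$ is where the work lies, and it is the hard part. Writing $\mathrm{(II)}=\bigl[(\eta(\Lambda/r))_s-\eta(\Lambda/r)\bigr]B_s+\eta(\Lambda/r)\bigl[B_s-B\bigr]$, with $(\eta(\Lambda/r))_s-\eta(\Lambda/r)=\int_0^s\e^{\i tA}[\i A,\eta(\Lambda/r)]\e^{-\i tA}\,\d t$, the key analytic input is that $[A,\eta(\Lambda/r)]$ extends to a bounded operator on $\cg^*$ uniformly in $r\ge1$; this is exactly what the resolvent bound $r\|(\Lambda+\i r)^{-1}\|_{\cb(\cg^*)}\le C$ and the inclusion $\cd(\Lambda)\subset\cd(A,\cg^*)$ provide, through the Helffer--Sj\"ostrand representation $\eta(\Lambda/r)=\tfrac1\pi\int_{\C}\bar\partial\tilde\eta(z)\,(\Lambda/r-z)^{-1}\,L(\d z)$ with $\tilde\eta$ a compactly supported almost analytic extension of $\eta$. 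The remaining term $\eta(\Lambda/r)[B_s-B]$ is the genuinely delicate one, as it still contains $B_s-B$ while $B=[T,\i A]$ is \emph{not} assumed of class $C^1(A)$ (that would amount to $T\in C^2(A,\cg,\cg^*)$); one handles it by iterating the $\Lambda$-decomposition along a dyadic family of scales and exploiting the quasi-orthogonality of the corresponding cut-offs, so that the offending pieces telescope — this is the bookkeeping carried out in \cite[\S7.5]{ABG}. The single real obstacle, then, is to control how the unitary group deforms spectral localisations of $\Lambda$ using only the abstract hypotheses on $\Lambda$, rather than an explicit bound on the (unavailable) commutator $[\Lambda,A]$. \qed
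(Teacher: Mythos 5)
The paper itself gives no proof of this statement (it is quoted verbatim from \cite{ABG}), so your text has to stand on its own, and it does not: it is an outline in which the decisive estimate is missing. Your Step 1 (reduction of the $C^{1,1}$ condition to $\int_0^1 s^{-1}\|B_s-B\|_{\cb(\cg,\cg^*)}\,\d s<\infty$ with $B=[T,\i A]$) is correct, and the high-frequency part $\mathrm{(I)}$ is indeed converted by $r=1/s$ into exactly the integral assumed in the hypothesis. But the whole content of the proposition is the low-frequency term $\mathrm{(II)}=(\eta(\Lambda/r)B)_s-\eta(\Lambda/r)B$, for which you only announce the target $\|\mathrm{(II)}\|_{\cb(\cg,\cg^*)}\leq Cs\|B\|$ and then, after the further split, declare the piece $\eta(\Lambda/r)(B_s-B)$ ``genuinely delicate'' and refer to the ``bookkeeping carried out in \cite{ABG}''. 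That is precisely the step that had to be proved: bounding $\eta(\Lambda/r)(B_s-B)$ by $Cs\|B\|$ uniformly in $0<s\le1$ would amount to a Lipschitz bound on $s\mapsto B_s$, i.e.\ essentially to $T\in C^2(A,\cg,\cg^*)$, which is not assumed, and the proposed dyadic ``telescoping'' is never formulated, so nothing in your argument extracts smallness in $s$ from the localisation $\eta(\Lambda/r)$. As set up, the estimate for $\mathrm{(II)}$ is circular — it asks for control of $B_s-B$, which is what the proof is supposed to produce.

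In addition, the one concrete tool you do invoke for the cut-off, uniform boundedness of $[A,\eta(\Lambda/r)]$ on $\cg^*$ ``through the Helffer--Sj\"ostrand representation'', is not justified by the hypotheses: $\Lambda$ is only closed and densely defined in $\cg^*$, with a resolvent bound along the single ray $-\i r$, $r>0$; no self-adjointness nor a bound $\|(\Lambda-z)^{-1}\|\leq C|\Im z|^{-1}$ for general non-real $z$ is available, so the Helffer--Sj\"ostrand formula does not apply, and expanding commutators with $(\Lambda/r-z)^{-1}$ would require information on $[\Lambda,A]$ that is nowhere assumed (the inclusion $\cd(\Lambda)\subset\cd(A,\cg^*)$ together with the resolvent bound only gives, via the closed graph theorem, that $A(\Lambda+\i r)^{-1}$ is bounded uniformly in $r\geq1$). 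You concede both difficulties in your closing sentences, which confirms the gap: to establish the proposition one must either reproduce the argument of \cite{ABG} (which works with the resolvent family $r(\Lambda+\i r)^{-1}$ and the interpolation-type description of $C^{1,1}$ rather than with a smooth functional calculus for $\Lambda$) or supply, by some other means, the missing control of the low-frequency term; your proposal does neither.
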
 

If $T$ is not bounded, we said that $T\in C^k(A,F',F'')$ if for $z\notin\sigma(T)$, \\$(T-z)^{-1}\in C^k(A,F'',F')$.

\begin{proposition}
For all $k>1$, we have
\[ C^{k}(A,F',F'')\subset C^{1,1}(A,F',F'')\subset C^1_\mathfrak{u}(A,F',F'')\subset C^{1}(A,F',F'').\]
\end{proposition}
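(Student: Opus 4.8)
The inclusion $C^1_\mathfrak{u}(A,F',F'')\subset C^1(A,F',F'')$ holds by definition, so the content lies in the other two. Throughout I would write $T_\tau=\rme^{\i\tau A}T\rme^{-\i\tau A}$ and $\omega(\tau)=T_\tau+T_{-\tau}-2T$, and I would use the elementary fact that, since $(\rme^{\i tA})_{t\in\R}$ is strongly continuous and leaves $F'$ and $F''$ invariant, the uniform boundedness principle provides a constant $C_0$ with $\|\rme^{\i tA}S\rme^{-\i tA}\|_{\cb(F',F'')}\le C_0\|S\|_{\cb(F',F'')}$ for every $S\in\cb(F',F'')$ and $|t|\le 1$.

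For $C^k(A,F',F'')\subset C^{1,1}(A,F',F'')$ with $k\in\N$, $k\ge 2$: since $C^k(A,F',F'')\subset C^{k-1}(A,F',F'')\subset\cdots\subset C^2(A,F',F'')$, it suffices to treat $k=2$. If $T\in C^2(A,F',F'')$, then for each $f\in F'$ the map $t\mapsto T_tf$ is twice continuously differentiable into $F''$ with $\tfrac{\rmd^2}{\rmd t^2}T_tf=\rme^{\i tA}[\i A,[\i A,T]]\rme^{-\i tA}f$, and $[\i A,[\i A,T]]$ extends to an element of $\cb(F',F'')$ by the iterated characterisation of $C^2$. Taylor's formula with integral remainder gives
\[
\omega(\tau)=\int_0^\tau(\tau-s)\bigl(\tfrac{\rmd^2}{\rmd s^2}T_s+\tfrac{\rmd^2}{\rmd s^2}T_{-s}\bigr)\,\rmd s,
\]
so the bound of the previous paragraph yields $\|\omega(\tau)\|_{\cb(F',F'')}\le C_0\,\|[\i A,[\i A,T]]\|_{\cb(F',F'')}\,\tau^2$ for $0<\tau\le1$, whence $\int_0^1\|\omega(\tau)\|\,\tfrac{\rmd\tau}{\tau^2}<\infty$, i.e.\ $T\in C^{1,1}(A,F',F'')$.

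The main point is $C^{1,1}(A,F',F'')\subset C^1_\mathfrak{u}(A,F',F'')$. The plan is to prove that the difference quotients $h(\tau):=\tau^{-1}(T_\tau-T)$ converge in $\cb(F',F'')$-norm as $\tau\downarrow0$. Once this is known, writing $s^{-1}(T_{t+s}-T_t)=\rme^{\i tA}h(s)\rme^{-\i tA}$ shows that $t\mapsto T_t$ is norm-$C^1$ with derivative $t\mapsto\rme^{\i tA}B\rme^{-\i tA}$ where $B=\lim_{\tau\downarrow0}h(\tau)$, $B$ being precisely the bounded extension of $[\i A,T]$ furnished by the $C^1$-characterisation recalled above; this gives $T\in C^1_\mathfrak{u}(A,F',F'')$. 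For the norm convergence, the exact identity $T_{2\tau}-2T_\tau+T=\rme^{\i\tau A}\omega(\tau)\rme^{-\i\tau A}$ rearranges into $h(\tau)-h(2\tau)=-\tfrac{1}{2\tau}\rme^{\i\tau A}\omega(\tau)\rme^{-\i\tau A}$, so that $\|h(2^{-j-1})-h(2^{-j})\|\le C_0\,2^{j}\|\omega(2^{-j-1})\|$; telescoping along the dyadic mesh gives $\|h(2^{-n})-h(2^{-m})\|\le C_0\sum_{j=m}^{n-1}2^{j}\|\omega(2^{-j-1})\|$, and a short interpolation between consecutive dyadic scales handles arbitrary $\tau$. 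It therefore remains to show that $\sum_{j\ge1}2^{j}\|\omega(2^{-j})\|<\infty$ with vanishing tails, equivalently that the dyadic-series form of the $C^{1,1}$-condition is equivalent to its integral form. This equivalence is the step I expect to be the main obstacle: pointwise values of $\|\omega\|$ at dyadic points are not controlled by the integral for an arbitrary function, so one must exploit the ``doubling'' identity $\omega(2\tau)=\rme^{\i\tau A}\omega(\tau)\rme^{-\i\tau A}+\rme^{-\i\tau A}\omega(\tau)\rme^{\i\tau A}+2\omega(\tau)$, which gives $\|\omega(2\tau)\|\le(2C_0+2)\|\omega(\tau)\|$ for $0<\tau\le1/2$, together with the comparison of $\|\omega\|$ across a dyadic block $[2^{-j-1},2^{-j}]$, in order to move between the two formulations. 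This is in essence the real-variable argument of \cite{ABG}; once it is in place, $h(\tau)$ is norm-Cauchy and the proof is complete.
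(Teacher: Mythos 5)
The paper does not actually prove this proposition: it is recalled as a standard fact from \cite{ABG} (Section 5.2.4 there), so there is no internal argument to compare yours with; your attempt has to stand on its own. The inclusions $C^{k}\subset C^{1,1}$ (via Taylor with integral remainder and local boundedness of the conjugation) and $C^1_\mathfrak{u}\subset C^1$ are fine. The problem is the middle inclusion, and it sits exactly where you flag it: your telescoping scheme needs $\sum_j 2^{j}\|\omega(2^{-j})\|<\infty$ (with vanishing tails), i.e.\ control of \emph{point values} of $\tau\mapsto\|\omega(\tau)\|$ at the dyadic points, while the $C^{1,1}$ hypothesis only controls the integral $\int_0^1\|\omega(\tau)\|\tau^{-2}\,\d\tau$, which is insensitive to the values of $\|\omega\|$ on any null set. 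The doubling identity you invoke cannot bridge this: the inequality $\|\omega(2\tau)\|\leq(2C_0+2)\|\omega(\tau)\|$ only compares points related by an exact power of $2$, so for a fixed dyadic point $2^{-j}$ it gives information coming from the countable (hence null) set $\{2^{-j-k}\}_{k\geq1}$, never from a set of positive measure over which the integral could be averaged; there is no ``comparison of $\|\omega\|$ across a dyadic block'' available, since a priori $\tau\mapsto T_\tau$ is only strongly continuous and $\|\omega\|$ need not be norm-continuous (norm continuity is part of what you are trying to prove). So the asserted equivalence of the dyadic-series and integral forms of the $C^{1,1}$ condition is unjustified, and the proof of $C^{1,1}\subset C^1_\mathfrak{u}$ is not closed.

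Two ways to repair it. One is to avoid point values by averaging your own estimate over the base point: integrating $\|h(2^{-m}\tau)-h(2^{-n}\tau)\|\leq C_0\sum_{j=n}^{m-1}\frac{2^{j}}{\tau}\|\omega(2^{-j-1}\tau)\|$ over $\tau\in[1/2,1]$ and substituting $s=2^{-j-1}\tau$ bounds the right-hand side by $C\int_0^{2^{-n}}\|\omega(s)\|s^{-2}\,\d s$, so for almost every $\tau$ the sequence $h(2^{-n}\tau)$ is norm-Cauchy; but you then still owe an argument that the limits agree and that $h(\sigma)$ converges as $\sigma\to0$ through \emph{all} values, which again requires comparing non-dyadically-related scales. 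The cleaner and standard route — the one in \cite{ABG} — never touches point values of $\|\omega\|$: one regularizes $T$ by $T^{\varepsilon}=\int\varphi(\tau)\,e^{i\varepsilon\tau A}Te^{-i\varepsilon\tau A}\,\d\tau$ with $\varphi$ smooth, even and of integral one, so that evenness expresses $T^{\varepsilon}-T$ and $[A,T^{\varepsilon}]-[A,T^{\varepsilon'}]$ through integrals of $\omega$ alone; the $C^{1,1}$ integral condition then shows $T^{\varepsilon}\in C^2\subset C^1_\mathfrak{u}$ and $T^{\varepsilon}\to T$, $[A,T^{\varepsilon}]$ Cauchy in $\cb(F',F'')$, and $C^1_\mathfrak{u}$ is stable under this kind of approximation. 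I recommend either carrying out that mollification argument in full or simply citing \cite{ABG} for the middle inclusion, as the paper implicitly does; also, when you identify the limit $B$ of the difference quotients, do not phrase it via ``the bounded extension of $[iA,T]$ furnished by the $C^1$-characterisation'', since $T\in C^1$ is part of the conclusion, not the hypothesis — norm convergence of $h(\tau)$ itself yields $T\in C^0_\mathfrak{u}$, then $T\in C^1$ and the norm-continuity of $t\mapsto e^{itA}Be^{-itA}$.
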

If $F'=F''=\ch$ is an Hilbert space, we note $C^1(A)=C^1(A,\ch,\ch^*)$.
If $T$ is self-adjoint, we have the following:

\begin{theorem}[Theorem 6.3.4 from \cite{ABG}]\label{th:634}
Let $A$ and $T$ be two self-adjoint operators in a Hilbert space $\ch$. Assume that the unitary group $\{\exp(iA\tau)\}_{\tau\in\R}$ leaves the domain $D(T)$ of $T$ invariant. Set $\cg=D(T)$. Then 
\begin{enumerate}
\item $T$ is of class $C^1(A)$ if and only if $T\in C^1(A,\cg,\cg^*)$.

\item $T$ is of class $C^{1,1}(A)$ if and only if $T\in C^{1,1}(A,\cg,\cg^*)$.
\end{enumerate}
\end{theorem}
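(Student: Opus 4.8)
The plan is to fix $z\notin\sigma(T)$, work with the resolvent $R:=(T-z)^{-1}$, and transfer both the $C^1$ and the $C^{1,1}$ conditions between $\ch$ and the pair $(\cg,\cg^*)$ by purely algebraic resolvent identities. First I would set up the functional framework. Equipping $\cg=D(T)$ with the graph norm, $T-z:\cg\to\ch$ is an isomorphism, and by duality $T-z:\ch\to\cg^*$ is one as well; hence $R$ restricts to an isomorphism $\ch\to\cg$ and extends to an isomorphism $\cg^*\to\ch$. The hypothesis that $\{\rme^{\i\tau A}\}$ leaves $\cg$ invariant gives, by the closed graph theorem, that each $\rme^{\i\tau A}$ is bounded on $\cg$; a routine $C_0$-group argument (cf.\ \cite[Ch.~3]{ABG}) upgrades this to strong continuity, so $\{\rme^{\i\tau A}\}$ is a $C_0$-group on $\cg$ and, by duality, on $\cg^*$. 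This is exactly what makes the classes $C^1(A,\cg,\cg^*)$ and $C^{1,1}(A,\cg,\cg^*)$ meaningful, and the whole argument will rest on differentiating the identity $\rme^{\i\tau A}R\rme^{-\i\tau A}=(\rme^{\i\tau A}T\rme^{-\i\tau A}-z)^{-1}$ relative to these groups. (Recall also that membership of $R$ in $C^k(A)$ does not depend on the choice of $z$, by the first resolvent equation.)

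For part (1) I would invoke the bounded-commutator characterisation of $C^1$ recalled above (\cite[Proposition~5.1.2]{ABG}) on both sides: $T\in C^1(A,\cg,\cg^*)$ iff $[T,A]$ extends to an element of $\cb(\cg,\cg^*)$, while $T$ is of class $C^1(A)$ iff $[R,A]$ extends to an element of $\cb(\ch)$. The bridge is the identity
\[
[R,A]=-R\,[T,A]\,R ,
\]
understood first on the core $\cd(A)$ and then in the form sense. From right to left: if $[T,A]\in\cb(\cg,\cg^*)$, the composition $\ch\xrightarrow{R}\cg\xrightarrow{[T,A]}\cg^*\xrightarrow{R}\ch$ is bounded, so $[R,A]\in\cb(\ch)$ and $T\in C^1(A)$. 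From left to right: if $[R,A]\in\cb(\ch)$, then $[T,A]=-(T-z)[R,A](T-z)$ is the bounded composition $\cg\xrightarrow{T-z}\ch\xrightarrow{[R,A]}\ch\xrightarrow{T-z}\cg^*$, so $T\in C^1(A,\cg,\cg^*)$.

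For part (2) I would repeat the scheme with the second differences $W_\tau(S):=\rme^{\i\tau A}S\rme^{-\i\tau A}+\rme^{-\i\tau A}S\rme^{\i\tau A}-2S$ entering the definition of $C^{1,1}$. Since $C^{1,1}\subset C^1$, by part (1) I may assume $T\in C^1(A,\cg,\cg^*)$, so $\|\rme^{\i\tau A}T\rme^{-\i\tau A}-T\|_{\cb(\cg,\cg^*)}=O(\tau)$. Iterating $\rme^{\i\tau A}R\rme^{-\i\tau A}-R=-R\,(\rme^{\i\tau A}T\rme^{-\i\tau A}-T)\,R+\dots$ yields an identity of the schematic form
\[
W_\tau(R)=-R\,W_\tau(T)\,R+(\text{terms quadratic in }\rme^{\pm\i\tau A}T\rme^{\mp\i\tau A}-T),
\]
in which the correction terms have $\cb(\ch)$-norm $O(\tau^2)$ and are therefore integrable against $\d\tau/\tau^2$ near $0$. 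Because $R$ and its inverses are isomorphisms between $\ch$, $\cg$ and $\cg^*$, the $\cb(\ch)$-norm of the main term $R\,W_\tau(T)\,R$ is comparable, in both directions, to $\|W_\tau(T)\|_{\cb(\cg,\cg^*)}$. Hence $\int_0^1\|W_\tau(R)\|_{\cb(\ch)}\,\d\tau/\tau^2<\infty$ if and only if $\int_0^1\|W_\tau(T)\|_{\cb(\cg,\cg^*)}\,\d\tau/\tau^2<\infty$, which is the asserted equivalence.

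The algebra above is easy; the main obstacle is the domain bookkeeping. One must justify that the formal commutator and second-difference identities hold in the appropriate weak/form sense (via the differentiated relation for $\rme^{\i\tau A}R\rme^{-\i\tau A}$), and — above all — that the bare invariance hypothesis on $D(T)$ genuinely produces $C_0$-groups on $\cg$ and on $\cg^*$, since every differentiation and Taylor expansion used in the argument is taken relative to these groups. Once that structural point is secured, isolating which remainder terms in part (2) are $O(\tau^2)$ and which reproduce $W_\tau(T)$ is a routine, if slightly tedious, computation.
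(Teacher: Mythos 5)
First, note that the paper does not prove this statement at all: it is imported verbatim as Theorem 6.3.4 of \cite{ABG}, so the only meaningful comparison is with the proof in that monograph. Your part (1) is essentially the standard argument (domain invariance gives $C_0$-groups on $\cg$ and, by duality, on $\cg^*$; then the bounded-commutator characterisation is transferred through $[R,A]=-R[T,A]R$, using that $R=(T-z)^{-1}$ is an isomorphism $\ch\to\cg$ and extends to an isomorphism $\cg^*\to\ch$), and as a sketch it is acceptable.

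The gap is in part (2), exactly at the step you dismiss as routine: the claim that the correction terms in the expansion of $W_\tau(R)$ have $\cb(\ch)$-norm $O(\tau^2)$. Writing $S_\tau=T_\tau-T$ and $R_\tau=(T_\tau-z)^{-1}$, the exact identity is
\begin{equation*}
W_\tau(R)=-R\,W_\tau(T)\,R+R\,S_\tau R_\tau S_\tau\,R+R\,S_{-\tau}R_{-\tau}S_{-\tau}\,R .
\end{equation*}
To get $O(\tau^2)$ you would have to use the bound $\|S_{\pm\tau}\|_{\cb(\cg,\cg^*)}=O(\tau)$ \emph{twice}, but the two factors $S_{\pm\tau}$ are separated by a single resolvent, which climbs only one rung of the scale $\cg\subset\ch\subset\cg^*$ (it maps $\cg^*\to\ch$ and $\ch\to\cg$, not $\cg^*\to\cg$), whereas each $S_{\pm\tau}$ descends two rungs. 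Consequently one of the two factors can only be estimated in $\cb(\ch,\cg^*)$ (or $\cb(\cg,\ch)$), where the hypothesis $T\in C^1(A,\cg,\cg^*)$ yields merely $O(1)$; an $O(\tau)$ bound there would amount to the strictly stronger assumption $[T,A]\in\cb(\cg,\ch)$. For the same reason your intermediate claim $\|R_\tau-R\|_{\cb(\cg^*,\ch)}=O(\tau)$ is not available, and interpolation does not rescue it: splitting the smallness as $O(\tau^{1/2})$ on each factor still gives only $O(\tau)$ in total. An $O(\tau)$ remainder is not integrable against $\d\tau/\tau^2$, so the asserted equivalence of the two $C^{1,1}$ integrals does not follow; the converse direction, expressing $W_\tau(T)$ through $W_\tau(R)$, runs into the identical obstruction. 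This is precisely why the proof of the $C^{1,1}$ half in \cite{ABG} is not a short resolvent expansion but relies on their interpolation/dyadic machinery for the $C^{1,1}$ class, exploiting the domain invariance in an essential way; your part (2) needs a genuinely different argument, not just bookkeeping.
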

Remark that, if $T:\ch\rightarrow\ch$ is not bounded, since $T:\cg\rightarrow\cg^*$ is bounded, in general, it is easier to prove that $T\in C^1(A,\cg,\cg^*)$ than $T\in C^1(A)$.

If $\cg$ is the form domain of $H$, we have the following:
\begin{proposition}[see p. 258 of \cite{ABG}]
Let $A$ and $T$ be self-adjoint operator in a Hilbert space $\ch$. Assume that the unitary group $\{\exp(iA\tau)\}_{\tau\in\R}$ leaves the form domain $\cg$ of $T$ invariant. Then 
\begin{enumerate}
\item $T$ is of class $C^k(A)$ if $T\in C^k(A,\cg,\cg^*)$, for all $k\in\N$.

\item $T$ is of class $C^{1,1}(A)$ if $T\in C^{1,1}(A,\cg,\cg^*)$.
\end{enumerate}
\end{proposition}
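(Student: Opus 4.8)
\emph{Proof proposal.} This is a transfer-of-regularity principle: it upgrades regularity of $T$ regarded as a bounded operator $\cg\to\cg^*$ (via its sesquilinear form) to regularity of $T$ as a self-adjoint operator in $\ch$, i.e.\ of its resolvent $R(z)=(T-z)^{-1}$ acting in $\ch$. The plan is to run the proof of Theorem \ref{th:634} (Theorem 6.3.4 of \cite{ABG}), keeping only the implications ``$T\in C^k(A,\cg,\cg^*)\Rightarrow T\in C^k(A)$'' and ``$T\in C^{1,1}(A,\cg,\cg^*)\Rightarrow T\in C^{1,1}(A)$''; when $\cg$ is the \emph{form} domain it is in general strictly larger than $D(T)$, so the reverse implications fail and only the one-directional statements survive. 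As a set-up: since $\{e^{iA\tau}\}_{\tau\in\R}$ leaves $\cg=D(|T|^{1/2})$ invariant, each $e^{iA\tau}$ is bounded on $\cg$ by the closed graph theorem, and a uniform-boundedness argument gives $\sup_{|\tau|\le1}\|e^{iA\tau}\|_{\cb(\cg)}<\infty$ together with strong continuity, so $\{e^{iA\tau}\}$ is a $C_0$-group on $\cg$, hence by duality on $\cg^*$, and the embeddings $\cg\hookrightarrow\ch\hookrightarrow\cg^*$ intertwine all three groups. For $z\notin\sigma(T)$ the map $T-z\colon\cg\to\cg^*$ is an isomorphism whose inverse $R(z)$ restricts on $\ch\subset\cg^*$ to the ordinary resolvent, so $R(z)\colon\ch\to\ch$ factors as $\ch\hookrightarrow\cg^*\xrightarrow{R(z)}\cg\hookrightarrow\ch$ and $R(z)$ belongs simultaneously to $\cb(\cg^*,\cg)$, $\cb(\ch,\cg)$, $\cb(\cg^*,\ch)$ and $\cb(\ch)$.

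For part (1) with $k=1$: assuming $[T,iA]$ extends to $B\in\cb(\cg,\cg^*)$, I would establish the identity $[R(z),iA]=R(z)\,B\,R(z)$, first in $\cb(\cg^*,\cg)$ and then, composing with the embeddings above, in $\cb(\ch)$; since the right-hand side is bounded on $\ch$, this gives $R(z)\in C^1(A,\ch,\ch)$, i.e.\ $T\in C^1(A)$. The delicate point is justifying this commutator identity when every object is only a bounded map between $\cg$, $\ch$, $\cg^*$: this is the standard mollification, replacing $A$ by $A_\varepsilon=A(1+i\varepsilon A)^{-1}$, writing $[R(z),A_\varepsilon]=R(z)[T-z,A_\varepsilon]R(z)$ (legitimate since $A_\varepsilon$ is bounded), and letting $\varepsilon\downarrow0$ using the $C^1(A,\cg,\cg^*)$ hypothesis on $T$ and the strong convergence of $A_\varepsilon$ on $\cg$ and on $\cg^*$. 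For $k>1$ I would argue by induction: in the same identity $[T,iA]\in C^{k-1}(A,\cg,\cg^*)$ because $T\in C^k(A,\cg,\cg^*)$, while $R(z)$ is of class $C^{k-1}(A,\cdot,\cdot)$ between each relevant pair of spaces of the triple (same identity plus the inductive hypothesis and the factorization); hence, by the Leibniz rule for $C^{k-1}$ regularity of a composition, $R(z)BR(z)\in C^{k-1}(A,\ch,\ch)$, so $[R(z),iA]\in C^{k-1}(A,\ch,\ch)$ and $R(z)\in C^k(A,\ch,\ch)$. The only labour is bookkeeping of which space each factor acts between, which is routine.

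For part (2): from $T\in C^{1,1}(A,\cg,\cg^*)$, i.e.\ $\int_0^1\|T_\tau+T_{-\tau}-2T\|_{\cb(\cg,\cg^*)}\,\tfrac{\d\tau}{\tau^2}<\infty$, I would start from $R(z)_\tau-R(z)=-R(z)_\tau(T_\tau-T)R(z)$ and rearrange it into
\[
R(z)_\tau+R(z)_{-\tau}-2R(z)=-R(z)_\tau\bigl(T_\tau+T_{-\tau}-2T\bigr)R(z)+\bigl(R(z)_\tau-R(z)_{-\tau}\bigr)(T_{-\tau}-T)R(z).
\]
In the first term, $\|\cdot\|_{\cb(\ch)}\le C\,\|R(z)_\tau\|_{\cb(\cg^*,\cg)}\,\|T_\tau+T_{-\tau}-2T\|_{\cb(\cg,\cg^*)}\,\|R(z)\|_{\cb(\ch,\cg)}$ with $\sup_{|\tau|\le1}\|R(z)_\tau\|_{\cb(\cg^*,\cg)}<\infty$ from the $C_0$-group bounds above, so this term contributes a finite $\int_0^1(\cdot)\,\tfrac{\d\tau}{\tau^2}$; the second term is $O(\tau^2)$ in $\cb(\ch)$, since the hypothesis already forces $T\in C^1(A,\cg,\cg^*)$, whence $R(z)_\tau-R(z)_{-\tau}=O(\tau)$ in $\cb(\cg^*,\cg)$ and $T_{-\tau}-T=O(\tau)$ in $\cb(\cg,\cg^*)$. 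Adding the two estimates gives $\int_0^1\|R(z)_\tau+R(z)_{-\tau}-2R(z)\|_{\cb(\ch)}\,\tfrac{\d\tau}{\tau^2}<\infty$, which is exactly $R(z)\in C^{1,1}(A,\ch,\ch)$, i.e.\ $T\in C^{1,1}(A)$.

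I expect the main obstacle to be precisely the rigorous justification of the operator identities $[R(z),iA]=R(z)[T,iA]R(z)$ and the second-difference identity at the level of bounded maps between the Gelfand triple $\cg\hookrightarrow\ch\hookrightarrow\cg^*$ — the mollification/limiting step and the uniform-in-$\tau$ control of $\|e^{iA\tau}\|_{\cb(\cg)}$ — rather than the algebraic manipulations, which are purely formal once those identities are in hand; this is the content hidden behind the reference to p.~258 of \cite{ABG}.
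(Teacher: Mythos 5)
The paper does not prove this proposition at all — it is quoted verbatim from p.~258 of \cite{ABG} — so there is no in-paper argument to compare against; your proof is the standard one from that reference (Gelfand triple $\cg\hookrightarrow\ch\hookrightarrow\cg^*$, the identity $R_\tau-R=-R_\tau(T_\tau-T)R$ for the resolvent viewed in $\cb(\cg^*,\cg)$, induction via $[R(z),iA]=\pm R(z)[T,iA]R(z)$ for (1), and the second-difference rearrangement with the $O(\tau^2)$ cross term for (2)). I checked the algebraic identity and the norm estimates in your part (2) and the inductive bookkeeping in part (1); they are correct, so your proposal is a valid (sketch of the) proof and essentially reproduces the cited argument.
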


As previously, since $T:\cg\rightarrow\cg^*$ is always bounded, it is, in general, easier to prove that $T\in C^k(A,\cg,\cg^*)$ than $T\in C^k(A)$.

Now we will recall the
\emph{H\"older-Zygmund continuity classes} of order
$s\in\,(0,\infty)\,$. Let $\ce$ be a Banach space and $F:\R\to\ce$ a
continuous function. If $0<s<1$ then $F$ is of class $\Lambda^s$ if
$F$ is H\"older continuous of order $s$.  If $s=1$ then $F$ is of
class $\Lambda^1$ if it is of Zygmund class, i.e.
$\|F(t+\varepsilon)+F(t-\varepsilon)-2F(t)\|\leq C\varepsilon$ for all
real $t$ and $\varepsilon>0$. If $s>1$, let us write $s=k+\sigma$ with
$k\geq1$ integer and $0<\sigma\leq1$; then $F$ is of class $\Lambda^s$
if $F$ is $k$ times continuously differentiable and $F^{(k)}$ is of
class $\Lambda^\sigma$. We said that $V\in \Lambda^s(A_u,\ch^1,\ch^{-1})$ if the function $\tau\mapsto V_\tau=e^{i\tau A_u}Ve^{-i\tau A_u}\in\rb(\ch^1,\ch^{-1})$ is of class $\Lambda^s$. Remark that, if $s\geq 1$ is an integer, $C^s(A_u,\ch^1,\ch^{-1})\subset \Lambda^s(A_u,\ch^1,\ch^{-1})$.

\section{Sub-exponential bounds on possible eigenvectors}\label{s: exp bounds}

In this section we will prove Theorem \ref{th:exp bound 1} and Theorem \ref{th:exp bound 2}. 

\subsection{The operator version}
Our proof of Theorem \ref{th:exp bound 1} closely follows the one of Theorem 2.1 in \cite{FH}. Therefore, we focus on the main changes. We will use notations of Theorem \ref{th:exp bound 1}.

For $\epsilon>0$ and $\tau>0$, define the real valued functions $F$ and $g$ by
\begin{equation}
F(x)=\tau \ln\biggl(\langle x\rangle(1+\epsilon\langle x\rangle)^{-1}\biggr)\text{ and }
\nabla F(x)=x g(x).\label{eq:premier F}
\end{equation}

Let $E\in\R$ and $\psi\in\cd(H)$ such that $H\psi=E\psi$. Let $\psi _F=\exp(F)\psi$. On the domain of $H$, we consider the operator
\begin{equation}
H(F)= e^F H e^{-F}= H-(\nabla F)^2+(ip\nabla F+i\nabla F p).\label{eq:H(F)}
\end{equation}

As in \cite{FH}, $\psi _F \in \cd (\Delta)=\cd(H(F))$,
\begin{equation}\label{eq:H(F)psiF}
H(F) \psi _F=E\psi _F 
\end{equation}
\begin{equation}\label{eq:H psiF}
\text{and }(\psi _F,H\psi _F)=(\psi _F,((\nabla F)^2+E)\psi _F).
\end{equation}

If we suppose in addition that 
\begin{equation}\label{eq:hyp borne exp}
\langle q\rangle^{\beta\tau} \exp(\alpha\langle q\rangle^\beta)\psi \in L^2(\R^\nu)
\end{equation}
 for all $\tau$ and some fixed $\alpha\geq 0$, $0<\beta<1$, then \eqref{eq:H(F)psiF} and \eqref{eq:H psiF} holds true for the new functions $F$ and $g$ given by
\begin{equation}
F(x)=\alpha\langle x\rangle^\beta+\tau \ln(1+\gamma\langle x\rangle^\beta\tau^{-1})\text{ and }
\nabla F(x)=x g(x)\label{eq:deuxieme F}
\end{equation}

for all $\gamma>0$ and $\tau>0$.

To replace Formula (2.9) in \cite{FH}, we prove the following
\begin{lemma}\label{l:Commutator psiF}
Suppose that $V$ is $\Delta$-compact. Let $u\in\cu$. Assume that $H=\Delta+V$ is of class $C^1(A_u)$. For both definitions of $F$ and $g$, we have
\begin{eqnarray}\label{eq:Commutator psiF}
(\psi _F,[H,iA_u]\psi _F)&=&(\psi_F,[(\nabla F)^2-q\cdot \nabla g,iA_u]\psi_F)\nonumber\\
& &-4\biggl\|\lambda(p)^{1/2}g^{1/2}A_D\psi_F\biggr\|^2-2\Re\biggl(gA_D\psi_F,i\nabla\lambda (p)\cdot p\psi_F\biggr)\nonumber\\
& &+4\Re\biggl([g^{1/2},\lambda (p)]g^{1/2}A_D\psi_F,A_D\psi_F\biggr).
\end{eqnarray}
\end{lemma}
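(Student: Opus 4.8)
The plan is to compute the commutator $(\psi_F,[H,iA_u]\psi_F)$ by splitting $A_u = u(p)\cdot q + \frac{i}{2}(\Div u)(p)$ and using the eigenvalue equation \eqref{eq:H(F)psiF} to trade powers of $H$ acting on $\psi_F$ for multiplication by $(\nabla F)^2 + E$. First I would write $H = \Delta + V$ and treat the two pieces separately. Since $H\in C^1(A_u)$ and $V$ is $\Delta$-compact, both $[\Delta,iA_u]$ and $[V,iA_u]$ make sense as forms on $\ch^2$ (respectively extend by the hypothesis), and $[V,iA_u]$ will be absorbed into the term $(\psi_F,[(\nabla F)^2 - q\cdot\nabla g,iA_u]\psi_F)$ once we recognize, via \eqref{eq:H(F)} and \eqref{eq:H(F)psiF}, that on the eigenvector $V\psi_F = (E + (\nabla F)^2 - q\cdot\nabla g - \text{(first order terms)})\psi_F$ up to the explicit first-order differential operator $ip\cdot\nabla F + i\nabla F\cdot p$; more precisely I would use $[H,iA_u] = [\Delta,iA_u] + [V,iA_u]$ and then replace $[V,iA_u] = [H(F) - \Delta + (\nabla F)^2 - (ip\cdot\nabla F + i\nabla F\cdot p),iA_u]$, with the $H(F)$-commutator killed against $\psi_F$ because $H(F)\psi_F = E\psi_F$ and $A_u$ is symmetric. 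Note $(\nabla F)^2 = |x|^2 g^2$ and the cross term $ip\cdot\nabla F + i\nabla F\cdot p = i(p\cdot x\, g + x\,g\cdot p)$, which is essentially $2gA_D$ up to the commutator $[p\cdot x, g] $-type corrections; this is where the operator $A_D$ enters.

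The core computation is then $[\Delta, iA_u] - [ip\cdot\nabla F + i\nabla F\cdot p, iA_u]$ evaluated on $\psi_F$. Using $[\Delta,iA_u] = 2p\cdot u(p) = 2p\cdot p\,\lambda(p)$ and the form of $A_u$, together with $\nabla F = xg$, I would expand the first-order-times-$A_u$ commutator. The term $-4\|\lambda(p)^{1/2}g^{1/2}A_D\psi_F\|^2$ should emerge as the "leading" piece: it is the $u(p)\cdot q$ part of $A_u$ paired against the $gA_D$ part of the cross term, with the symmetrization producing the square of $\lambda(p)^{1/2}g^{1/2}A_D$. The remaining two terms — $-2\Re(gA_D\psi_F, i\nabla\lambda(p)\cdot p\,\psi_F)$ and $+4\Re([g^{1/2},\lambda(p)]g^{1/2}A_D\psi_F, A_D\psi_F)$ — are the error terms coming, respectively, from the $\frac{i}{2}(\Div u)(p)$ part of $A_u$ (equivalently from $\nabla\lambda$ appearing when one moves $\lambda(p)$ past $q$), and from the non-commutativity of the multiplication operator $g^{1/2}$ with the Fourier multiplier $\lambda(p)$ when one symmetrizes $\lambda(p) g A_D$ into $\|\lambda(p)^{1/2}g^{1/2}A_D\cdot\|^2$ form. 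I would carefully track each commutator $[q,\lambda(p)] = i\nabla\lambda(p)$ and each $[g^{1/2},\lambda(p)]$, and check that all terms involving two derivatives of $\lambda$ or $g$ either cancel or have already been accounted for; the hypothesis $u\in\cu$ (so $\nabla\lambda$, $p\cdot\nabla\lambda(p)$ bounded) guarantees these error terms are well-defined on $\cd(\Delta)$.

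The main obstacle I anticipate is the bookkeeping of domains and the justification that all the manipulations — in particular commuting the unbounded Fourier multipliers $\lambda(p)$, $\nabla\lambda(p)$ past the unbounded multiplication operators $g(q)$, $x\,g(q)$, $A_D$ — are legitimate on $\psi_F\in\cd(\Delta)$. One must know that $g$ and $x g$ map $\cd(\Delta)$ into itself (true because $g$ and $\nabla F = xg$ are smooth with bounded derivatives of all orders for both choices \eqref{eq:premier F} and \eqref{eq:deuxieme F}, in particular $|x|^2 g$ and its derivatives are controlled), that $A_D\psi_F\in\cd(\langle q\rangle^{1/2})$-type spaces so that $g^{1/2}A_D\psi_F$ is meaningful, and that $[g^{1/2},\lambda(p)]$ is bounded (this follows from a standard commutator expansion/Helffer–Sjöstrand argument since $\lambda\in\cs^0$ and $g^{1/2}$ is smooth and decaying). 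These are exactly the kinds of estimates that, for the dilation generator $A_D$ in \cite{FH}, were automatic because $[\Delta,iA_D] = 2\Delta$ is a function of $p$ alone; here the price of the more general $A_u$ is precisely the two extra $\Re$-terms, and the work is to show they are the *only* extra terms. I would organize the proof as: (i) reduce to computing $(\psi_F,[\Delta - (ip\cdot\nabla F + i\nabla F\cdot p), iA_u]\psi_F)$ using the eigenvalue equation and symmetry of $A_u$; (ii) substitute $[\Delta,iA_u] = 2p\cdot p\,\lambda(p)$ and the explicit cross term; (iii) symmetrize to produce the square term and collect the two error terms; (iv) verify all domain/boundedness conditions. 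The identity then holds for both $F$'s since only the relations $\nabla F = xg$ and $\psi_F\in\cd(\Delta)$, $H(F)\psi_F = E\psi_F$ were used, all of which were established above for either choice.
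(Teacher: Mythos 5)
Your plan follows essentially the same route as the paper: split off $H(F)=e^FHe^{-F}$, kill its contribution against $\psi_F$ via the eigenvalue equation $H(F)\psi_F=E\psi_F$ and the symmetry of $A_u$ (the Virial step), rewrite $H-H(F)=(\nabla F)^2-q\cdot\nabla g-2igA_D$ using $\nabla F=xg$, decompose $A_u=\lambda(p)A_D+\tfrac{i}{2}\nabla\lambda(p)\cdot p$ via $[q,\lambda(p)]=i\nabla\lambda(p)$, and symmetrize $g\lambda(p)+\lambda(p)g$ to produce the square term plus the $[g^{1/2},\lambda(p)]$ and $\nabla\lambda(p)\cdot p$ corrections, with the same extension-to-$\psi_F$ and boundedness checks. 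One minor slip in your narrative: in your own substitution the $[\Delta,iA_u]$ contributions cancel (the final identity contains no $2p\cdot u(p)$ term), and the $2igA_D$ piece enters through the cross pairing $-(2gA_D\psi_F,A_u\psi_F)-(A_u\psi_F,2gA_D\psi_F)$ rather than through a literal commutator $[ip\cdot\nabla F+i\nabla F\cdot p,iA_u]$ --- which is in fact the structure you correctly describe when locating the origin of $-4\bigl\|\lambda(p)^{1/2}g^{1/2}A_D\psi_F\bigr\|^2$ and of the two $\Re$-error terms.
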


We make some remarks about this Lemma:
\begin{enumerate}[(a)]

\item In the case \eqref{eq:premier F},
note that $\jap{x}g^{1/2}(x)$ is bounded. Thus $\biggl\|\lambda(p)^{1/2}g^{1/2}A_D\psi_F\biggr\|$ is well defined. 

\smallskip

\item In the case \eqref{eq:deuxieme F}, suppose that \eqref{eq:hyp borne exp} is true for all $\tau$ and some fixed $\alpha\geq 0$, $0<\beta<1$, we have
\begin{eqnarray*}
A_D\psi_F&=&p\cdot q\psi_F+\frac{i}{2}\psi_F\\
&=&p\cdot \frac{q}{\jap{q}}\jap{q}\biggl(1+\gamma\langle q\rangle^\beta\tau^{-1}\biggr)^{\tau} \exp(\alpha\langle q\rangle^\beta)\psi+\frac{i}{2}\psi_F.
\end{eqnarray*} 
Thus, $\psi_F\in L^2(\R^\nu)$ and \[\jap{q}\biggl(1+\gamma\langle q\rangle^\beta\tau^{-1}\biggr)^{\tau} \exp(\alpha\langle q\rangle^\beta)\psi\in L^2(\R^\nu).\] Moreover, since $H\psi=E\psi$ and $\nabla F$ is bounded for all $\tau>0$, we can show that  \[\jap{q}\biggl(1+\gamma\langle q\rangle^\beta\tau^{-1}\biggr)^{\tau} \exp(\alpha\langle q\rangle^\beta)\psi\in \ch^1.\]
Thus $\biggl\|\lambda(p)^{1/2}g^{1/2}A_D\psi_F\biggr\|$ is well defined.

\smallskip

\item If $V:\ch^1\rightarrow\ch^{-1}$ is compact and $V\in C^1(A_u,\ch^1,\ch^{-1})$, Lemma \ref{l:Commutator psiF} is still true with the same proof.
\end{enumerate}

\begin{proof}[Lemma \ref{l:Commutator psiF}]
 Since $V$ is of class $C^1(A_u,\cg,\cg^*)$ whith $\cg=\ch^2$ if $V$ is $\Delta$-compact, $\cg=\ch^1$ if $V:\ch^1\rightarrow\ch^{-1}$ is compact, by a simple computation, we can show that $e^F\Delta e^{-F}$ is of class $C^1(A_u,\cg,\cg^*)$, which implies that $H(F)=e^F\Delta e^{-F}+V$ is of class $C^1(A_u,\cg,\cg^*)$. For $\phi\in\cd(H)\cap\cd(A_u)$, we have
\begin{eqnarray*}
(\phi ,[H,iA_u]\phi )&=&(\phi ,[(H-H(F)),iA_u]\phi)+(\phi ,[H(F),iA_u]\phi )\\
&=&((H-H(F))\phi,iA_u\phi)-(A_u\phi,i(H-H(F))\phi)\\
& &+(\phi ,[H(F),iA_u]\phi )
\end{eqnarray*}

By using \eqref{eq:H(F)} and \eqref{eq:H(F)psiF}, we have:
\begin{equation*}
(H-H(F))\phi=((\nabla F)^2-(ip\nabla F+i\nabla F p))\phi
\end{equation*}

A simple computation gives
\begin{equation*}
(ip\nabla F+i\nabla F p)\phi=i(p(qg)+(qg)p)\phi=q\cdot \nabla g\phi+2ig A_D\phi
\end{equation*}
We have
\[(H-H(F))\phi=((\nabla F)^2-q\cdot \nabla g-2ig A_D)\phi\]
thus
\begin{multline}\label{eq:commut H}
(\phi ,[H,iA_u]\phi )=(\phi,[(\nabla F)^2-q\cdot \nabla g,iA_u]\phi)\\
-(2gA_D\phi,A_u\phi)-(A_u\phi,2gA_D\phi)+(\phi ,[H(F),iA_u]\phi )
\end{multline}

Since $u(x)=x\lambda(x)$,
\begin{equation}\nonumber
 A_u =\frac{1}{2}(\lambda(p)p\cdot q+q\cdot\lambda(p)p)
 =\lambda(p)A_D+\frac{1}{2}[q,\lambda(p)]p
\end{equation}

Using the Fourier transform, we see that $[q,\lambda(p)]=i\nabla\lambda(p)$.

Therefore
\[A_u=\lambda(p)A_D+\frac{i}{2}\nabla\lambda (p)\cdot p\]
which implies
\[
\begin{cases}
(2gA_D\phi,A_u\phi)=(2gA_D\phi,\lambda(p)A_D\phi)+(2gA_D\phi,\frac{i}{2}\nabla\lambda (p)\cdot p\phi)\\
(A_u\phi,2gA_D\phi)=(\lambda(p)A_D\phi,2gA_D\phi)+(\frac{i}{2}\nabla\lambda (p)\cdot p\phi,2gA_D\phi)
\end{cases}
\]

By sum, we get
\begin{eqnarray*}
(2gA_D\phi,A_u\phi)+(A_u\phi,2gA_D\phi)&=&2(A_D\phi,(g\lambda(p)+\lambda(p)g)A_D\phi)\\
& &+(gA_D\phi,i\nabla\lambda (p)\cdot p\phi)\\
& &+(i\nabla\lambda (p)\cdot p\phi,gA_D\phi).
\end{eqnarray*}
Since $g$ and $\lambda$ are positive,
\[
g\lambda(p)+\lambda(p)g=2g^{1/2}\lambda(p)^{1/2}\lambda(p)^{1/2}g^{1/2}
+g^{1/2}[g^{1/2},\lambda(p)]+[\lambda(p),g^{1/2}]g^{1/2}.
\]
This yields
\begin{eqnarray*}
(A_D\phi,(g\lambda(p)+\lambda(p)g)A_D\phi)&=&2\biggl\|\lambda(p)^{1/2}g^{1/2}A_D\phi\biggr\|^2\\
& &+2\Re\biggl(g^{1/2}A_D\phi,[g^{1/2},\lambda (p)]A_D\phi\biggr).
\end{eqnarray*}

So from \eqref{eq:commut H}, we obtain 
\begin{eqnarray}\label{eq:commut H 2}
(\phi,[H,iA_u]\phi )&=&(\phi,[(\nabla F)^2-q\cdot \nabla g,iA_u]\phi)\nonumber\\
& &-4\biggl\|\lambda(p)^{1/2}g^{1/2}A_D\phi\biggr\|^2-2\Re\biggl(gA_D\phi,i\nabla\lambda (p)\cdot p\phi\biggr)\nonumber\\
& &-4\Re\biggl(g^{1/2}A_D\phi,[g^{1/2},\lambda (p)]A_D\phi\biggr)+(\phi ,[H(F),iA_u]\phi ).
\end{eqnarray}
Remark that if $F$ satisfies \eqref{eq:premier F}, since $\jap{q}g^{1/2}$ is bounded, all operators which appears on the right hand side of \eqref{eq:commut H 2} are bounded in the $\ch^1$ norm. In particular, this equation can be extended to a similar equation for $\phi\in\cg\subset\ch^1$. Thus, since $\psi_F\in\cg$, we obtain a similar equation by replacing $\phi$ by $\psi_F$.

If $F$ satisfies \eqref{eq:deuxieme F}, we can see that all operators which appears on the right hand side of \eqref{eq:commut H 2} are bounded in the $\ch^1_1$ norm.  In particular, this equation can be extended to a similar equation for $\phi\in\ch^2_1$ if $V$ is $\Delta$-compact, $\phi\in\ch^1_1$ if $V:\ch^1\rightarrow\ch^{-1}$ is compact.

In all cases, since, by the Virial theorem and \eqref{eq:H(F)psiF}, $ (\psi_F ,[H(F),iA_u]\psi_F )=0$, we obtain \eqref{eq:Commutator psiF}.
%
\qed
\end{proof}

Since in \eqref{eq:Commutator psiF}, we do not know an explicit form for the commutator $[(\nabla F)^2-q\cdot \nabla g,iA_u]$, as in \cite{FH}, we need to control the size of this expression.

\begin{lemma}\label{l:H-S}
Let $f:\R^\nu\rightarrow \R$ be a $C^\infty$ application such that $f\in\cs^{\rho}$.\\
Then $\jap{q}^{-\rho}[f(q),iA_u]$ is bounded for all $C^\infty$ vector fields $u$ with bounded derivatives.
\end{lemma}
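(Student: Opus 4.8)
The plan is to use the Helffer--Sjöstrand formula to write $f(q)$ in terms of the resolvent of an auxiliary operator, and then commute with $A_u$ term by term. First I would introduce an almost-analytic extension $\tilde f$ of $f$: since $f\in\cs^\rho$, one can build $\tilde f\in C^\infty(\C)$ with $\tilde f|_\R = f$, $\mathrm{supp}\,\tilde f$ contained in a suitable strip, and $|\bar\partial \tilde f(z)| \leq C_N \jap{\Re z}^{\rho-1-N}|\Im z|^N$ for every $N$ (this is the quantitative bound one gets from the seminorms $C_k(f)$, and it is exactly the content recalled in the Appendix on the H\"older--Sj\"ostrand formula). Then
\[
f(q) = \frac{1}{\pi}\int_\C \bar\partial\tilde f(z)\,(q-z)^{-1}\,dL(z),
\]
where $dL$ is Lebesgue measure on $\C$; note $(q-z)^{-1}$ is just multiplication by $(x-z)^{-1}$, which is bounded by $|\Im z|^{-1}$.

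Next I would compute the commutator inside the integral. For fixed $z$ with $\Im z\neq 0$,
\[
[(q-z)^{-1}, iA_u] = -(q-z)^{-1}[q, iA_u](q-z)^{-1} = -(q-z)^{-1}\, \nabla f\text{-type terms}\ldots
\]
more precisely, since $A_u = u(p)\cdot q + \tfrac{i}{2}(\mathrm{div}\,u)(p)$ and $[q_j, u_k(p)] = i(\partial_k u_j)(p)$, one gets $[(q-z)^{-1}, iA_u]$ expressed as a sum of terms of the form $(q-z)^{-1} B (q-z)^{-1}$ and $(q-z)^{-1} B' (q-z)^{-2}$, where $B, B'$ are bounded operators built from $u(p)$ and its derivatives (all bounded by hypothesis on $u$). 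The worst singularity in $\Im z$ is thus $|\Im z|^{-3}$, coming from products of up to three resolvent factors together with the extra $x$-weight from the $q$ in $A_u$. Here one must be slightly careful: the factor $q$ in $u(p)\cdot q$ produces a $\jap{q}$ weight, which is precisely why the statement asserts boundedness of $\jap{q}^{-\rho}[f(q),iA_u]$ rather than of $[f(q),iA_u]$ itself; so I would track that $\jap{q}$ against the decay $\jap{\Re z}^{\rho - 1 - N}$ in the $\bar\partial\tilde f$ bound.

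Finally I would reassemble: multiplying by $\jap{q}^{-\rho}$ on the left and integrating,
\[
\jap{q}^{-\rho}[f(q), iA_u] = \frac{1}{\pi}\int_\C \bar\partial\tilde f(z)\, \jap{q}^{-\rho}[(q-z)^{-1}, iA_u]\, dL(z),
\]
and the integral converges in operator norm because, choosing $N$ large enough, the integrand is bounded by $C_N \jap{\Re z}^{\rho-1-N}\cdot \jap{\Re z}\cdot |\Im z|^{N-3}$ on the strip $|\Im z|\le \jap{\Re z}$ (and has compact support in $\Im z$ direction otherwise), which is integrable in $z$ once $N\ge 4$ and $N - \rho \ge 3$ or so. The main obstacle is the bookkeeping in the second step: keeping track of exactly how many resolvent factors and how many powers of $\jap{q}$ each commutator term carries, so that the $\bar\partial\tilde f$ decay in $\Re z$ can absorb the $\jap{q}^\rho$ growth after the prefactor $\jap{q}^{-\rho}$ is applied, uniformly in $z$. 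This is routine given the H\"older--Sj\"ostrand machinery recalled in the Appendix, and the same scheme (applied with $\jap{q}^{-\rho}$ replaced by $\jap{p}^{-1}\cdots\jap{p}^{-1}$) is what is needed to bound the commutator terms appearing in Lemma~\ref{l:Commutator psiF}.
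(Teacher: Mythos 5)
Your toolbox (almost-analytic extensions, Helffer--Sj\"ostrand) is the right one, but your route is different from the paper's and, as sketched, it does not go through. The paper never represents $f(q)$ by a resolvent integral: it uses $[f(q),q]=0$ to write $[f(q),iA_u]=i\,q\cdot[f(q),u(p)]+\tfrac12[f(q),(\mathrm{div}\,u)(p)]$ and then applies the Helffer--Sj\"ostrand \emph{commutator expansion} \eqref{eq: H-S formula} with $B=q$ and $T=u(p)$ (resp.\ $(\mathrm{div}\,u)(p)$): the explicit first-order terms are $\nabla f(q)$ times bounded Fourier multipliers, and the remainders $I_2,I_2'$ are handled by Proposition \ref{prop: estimate H-S}, which gives $\jap{q}^{s}I_2$ bounded for $s<2-\rho$. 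The conclusion then follows because $\jap{x}^{-\rho+1}\nabla f(x)$ is bounded, i.e.\ the weight $\jap{q}^{-\rho}$ is absorbed by the extra decay of $\nabla f$ relative to $f$ --- not, as you assert, by the factor $q$ coming from $u(p)\cdot q$ in $A_u$ (note that in the paper's applications $\rho<0$, so $\jap{q}^{-\rho}$ is a \emph{growing} weight and makes the statement stronger, so it cannot be there to compensate anything coming from $A_u$).

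Two concrete gaps in your version. First, the formula $f(q)=\pi^{-1}\int_\C\bar\partial\tilde f(z)(q-z)^{-1}\,dL(z)$ is a single-operator identity: for $\nu\ge2$, $q$ is a vector of commuting operators and ``multiplication by $(x-z)^{-1}$'' with scalar $z$ is meaningless, so you would need a genuinely multivariable almost-analytic calculus; moreover, even for $\nu=1$ the integral converges in norm only when $\rho<0$, while the lemma is stated for arbitrary $\rho$. Second, and more seriously, your final norm estimate on the integrand fails exactly in the regime the paper uses. With $\rho=-2$ (or $\rho=2\beta-2<-1$), $\jap{q}^{-\rho}=\jap{q}^{|\rho|}$ grows, and $\jap{q}^{|\rho|}(q-z)^{-1}$ is \emph{unbounded} when $|\rho|>1$: the resolvent of $q$ decays only near $\Re z$ and not at spatial infinity. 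Testing your terms $\jap{q}^{-\rho}(q-z)^{-1}B(q-z)^{-1}q$ on states localized at $|x|\sim R$ gives growth of order $R^{|\rho|-1}$, so the claimed bound $C_N\jap{\Re z}^{\rho-1-N}\jap{\Re z}\,|\Im z|^{N-3}$ on the weighted integrand is false, and the $z$-integral cannot be run. The decay that saves the day must come from the derivatives of $f$ (each derivative gains one power of $\jap{x}^{-1}$, by definition of $\cs^\rho$), and in your decomposition those derivatives never appear on the operator side --- they are hidden in $\bar\partial\tilde f$, where they cannot be played against the weight. Extracting them explicitly, with a weighted bound on what is left over, is precisely what the expansion-plus-remainder argument of the paper (via Proposition \ref{prop: estimate H-S}) does.
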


\begin{proof}
Suppose that $f\in\cs^{\rho}$. Then
\[
\forall k\in\N, \qquad \sup\limits_{t\in\R^\nu}\{\langle t\rangle^{-\rho+k}|\partial^\alpha_t f(t)|\}<\infty
\]
for all $\alpha$ multi-index such that $|\alpha|=k$.

Since $[f(q),q]=0$ and 
\[A_u=q\cdot u(p)-\frac{i}{2} div(u)(p),\]
 we have
\begin{equation}\label{eq:commutator}
[f(q),i A_u]= [f(q),i q\cdot u(p)+\frac{1}{2} div(u)(p)]=i q\cdot [f(q),u(p)]+\frac{1}{2} [f(q),div(u)(p)]
\end{equation}

By using the Helffer-Sj\"ostrand formula on $[f(q),u(p)]$, with $B=q$, $T=u(p)$ and\\ $\varphi(x)=f(x)$, we have:
\begin{equation}
 [f(q),u(p)]=i\nabla f(q)div(u)(p)+I_2
\end{equation}
where $I_2$ is the rest of the development of order $2$ in \eqref{eq: H-S formula}.
Similarly, 
\begin{equation}
 [f(q),div(u)(p)]=i\nabla f(q)\nabla div(u)(p)+I_2'
\end{equation}

So, from \eqref{eq:commutator}, we have
\begin{equation}\label{eq:H-S 1}
 [f(q),iA_u]=-q\cdot\nabla f(q)div(u)(p)-q\cdot I_2+\frac{i}{2}\nabla f(q)\nabla div(u)(p)+I_2'
\end{equation}
From Proposition \ref{prop: estimate H-S}, we deduce, since $f\in\cs^{\rho}$, that $\langle q\rangle^{s}I_2$ and $\langle q\rangle^{s}I_2'$ are bounded if $s<-\rho+2$. Moreover, since $f\in\cs^{\rho}$, $\langle x\rangle^{-\rho+1}\nabla f(x)$ is bounded, and we conclude that $\langle q\rangle^{-\rho}q\cdot\nabla f(q)$ is bounded. Since, by assumptions, $div(u)(p)$ and $\nabla div(u)(p)$ are bounded, by sum, $\langle q\rangle^{-\rho}[f(q),iA_u]$ is bounded.
\qed
\end{proof}

\begin{proof}[Theorem \ref{th:exp bound 1}]
Suppose that $E\notin\ce_u(H)$.

Let $F(x)=\tau \ln(\langle x\rangle(1+\epsilon \langle x\rangle)^{-1})$ and 
$ \Psi_\epsilon =\psi_F/\|\psi_F\|$.

Following \cite[equations (2.11) and (2.12)]{FH}, we can prove that $\nabla \Psi_\epsilon$ is bounded and that $(\Delta+1)\Psi_\epsilon$ converges weakly to zero as $\epsilon\rightarrow 0$. Thus, for all $\eta>0$, since $\langle q\rangle^{-\eta}(\Delta+1)^{-1}$ is compact, $\|\langle q\rangle^{-\eta}\Psi_\epsilon\|$ converges to 0 and, similarly, $\|\langle q\rangle^{-\eta}\nabla\Psi_\epsilon\|$ converges to 0.

From Lemma \ref{l:Commutator psiF}, we deduce that 
\begin{eqnarray}\label{eq:major commut psiF}
\biggl(\Psi _\epsilon,[H,iA_u]\Psi _\epsilon\biggr)&\leq& \biggl(\Psi_\epsilon,[(\nabla F)^2-q\cdot \nabla g,iA_u]\Psi_\epsilon\biggr)\nonumber\\
& &-2\Re\biggl(gA_D\psi_F,i\nabla\lambda (p)\cdot p\psi_F\biggr)\nonumber\\
& &-2\Re\biggl(gA_D\Psi_\epsilon,i\nabla\lambda (p)\cdot p\Psi_\epsilon\biggr).
\end{eqnarray}
Since $((\nabla F)^2-q\cdot \nabla g)$ is in $\cs^{-2}$, by Lemma \ref{l:H-S}, we have $\langle q\rangle^2[(\nabla F)^2-q\cdot \nabla g,iA_u]$ is bounded. Thus the first term on right side of \eqref{eq:major commut psiF} converges to zero as $\epsilon\rightarrow0$.
By assumptions, $\nabla\lambda (p)\cdot p$ is bounded. Since $\nabla \Psi_\epsilon$ is bounded, $\jap{q}^{-1}A_D\Psi_\epsilon$ is bounded and , for all $\mu>0$, $\|\jap{q}^{-1-\mu}A_D\Psi_\epsilon\|$ converges to zero as $\epsilon\rightarrow0$. Thus, since $\langle q\rangle^2 g$ is bounded, the last term on the right side of \eqref{eq:major commut psiF} converges to zero as $\epsilon\rightarrow0$.

Moreover, by the Helffer-Sjostrand formula, we have
\[[g^{1/2},\lambda (p)]=-i\nabla(g^{1/2})\nabla \lambda(p)+I\]
with $\jap{q}I\jap{q}^{s'}$ bounded for $s'<1$. In particular, $\jap{q}[g^{1/2},\lambda (p)]\jap{q}^{s'}$ is bounded for all $s'<1$. Thus,
\[
\left\|\jap{q}[g^{1/2},\lambda (p)]g^{1/2}A_D\Psi_\epsilon \right\|=\left\|\jap{q}[g^{1/2},\lambda (p)]g^{1/2}\jap{q}^{3/2}\jap{q}^{-3/2}A_D\Psi_\epsilon \right\|,\]
and since $\jap{q}g^{1/2}$ is bounded, the second term on the right side of \eqref{eq:major commut psiF} converges to zero as $\epsilon\rightarrow0$.

Thus, we deduce that
\[\limsup_{\epsilon\rightarrow 0}\biggl(\Psi _\epsilon,[H,iA_u]\Psi _\epsilon\biggr)\leq 0.\]

We follow \cite[equations (2.16) to (2.19)]{FH}to prove that, if $E\notin \ce_u(H)$, then
 \[\langle x\rangle^\tau \psi \in L^2(\R^\nu)\qquad \forall \tau>0.\]

\smallskip

Suppose now that the Theorem \ref{th:exp bound 1} is false so that 
\begin{equation}
S_E=\alpha_1 ^2+E
\end{equation}

where $\alpha_1>0$ and $S_E \notin \ce_u(H)$. By definition of $\ce_u(H)$, we have \eqref{eq:Mourre estimate} for some $\delta>0$, some $c_0>0$ and some compact operator $K_0$ with $I=[S_E -\delta,S_E +\delta]$.

As in \cite[equations (2.22) and (2.23)]{FH}, let $\alpha\in (0,\alpha_1)$ such that 
\[\alpha^2+E\in [S_E -\delta/2,S_E +\delta/2].\]
Let $0<\beta<1$. We have for all $\tau>0$
\begin{equation}
\langle x\rangle^{\beta\tau}\exp(\alpha \langle x\rangle^\beta)\psi \in L^2(\R^\nu).
\end{equation}

 Suppose $\gamma>0$ such that $\alpha+\gamma>\alpha_1$. So we have 

\begin{equation}\label{eq:exp}
\|\exp((\alpha+\gamma)\langle x\rangle^\beta)\psi\|=+\infty.
\end{equation}

In the following, we suppose that $\gamma$ is sufficiently small, $\gamma \in (0,1]$.
We denote by $b_j,\\  j=1,2,\cdots$ constants which are independant of $\alpha$, $\gamma$ and $\tau$.

Let $F(x)=\alpha\langle x\rangle^\beta+\tau \ln(1+\gamma\langle x\rangle^\beta\tau^{-1})$ and $\psi_F=\exp(F)\psi$, $\Psi_\tau=\psi_F/\|\psi_F\|$. 

By a simple estimate, we have $|x\nabla g(x)|\leq b_1 \langle x\rangle^{\beta-2}$ and \[(\nabla F)^2(x)\leq(\alpha+\gamma)^2\langle x\rangle^{2\beta-2} \leq(\alpha+\gamma)^2.\]

As previously, \eqref{eq:major commut psiF} is true.
Since $((\nabla F)^2-q\cdot \nabla g)$ is in $\cs^{2\beta-2}$, by Lemma \ref{l:H-S}, we have $\langle q\rangle^{2-2\beta}[(\nabla F)^2-q\cdot \nabla g,iA_u]$ is bounded. Therefore, the first term on right side of \eqref{eq:major commut psiF} converges to zero as $\tau\rightarrow\infty$.
By assumptions, $\nabla\lambda (p)\cdot p$ is bounded. As previously $\jap{q}^{-1}A_D\Psi_\tau$ is bounded and , for all $\mu>0$, $\|\jap{q}^{-1-\mu}A_D\Psi_\tau\|$ converges to zero as $\tau\rightarrow+\infty$. Thus, since $\langle q\rangle^{2-\beta} g$ is bounded, the last term on the right side of \eqref{eq:major commut psiF} converges to zero as $\tau\rightarrow+\infty$.

Moreover, by the Helffer-Sjostrand formula, we have
\[[g^{1/2},\lambda (p)]=-i\nabla(g^{1/2})\nabla \lambda(p)+I\]
with $\jap{q}^sI\jap{q}^{s'}$ bounded for $s<2$, $s'<1$ and $s+s'<3-\frac{\beta}{2}$.\\
In particular, $\jap{q}^{1}[g^{1/2},\lambda (p)]\jap{q}^{1/2}$ is bounded. Thus,
\[
\left\|\jap{q}[g^{1/2},\lambda (p)]g^{1/2}A_D\Psi_\epsilon \right\|=\left\|\jap{q}[g^{1/2},\lambda (p)]\jap{q}^{1/2}g^{1/2}\jap{q}^{-1/2}A_D\Psi_\epsilon \right\|,\]
and since $\jap{q}^{1-\frac{\beta}{2}}g^{1/2}$ is bounded, the second term on the right side of \eqref{eq:major commut psiF} converges to zero as $\tau\rightarrow+\infty$.

Thus, we deduce that 
\[\limsup_{\tau\rightarrow \infty}\biggl(\Psi _\tau,[H,iA_u]\Psi _\tau\biggr)\leq0.\]

As in \cite{FH}, we have 
\[\limsup\limits_{\tau\rightarrow+\infty}\left\|(H-E-(\nabla F)^2)\Psi_\tau\right\|=\limsup\limits_{\tau\rightarrow+\infty}\left\|(p\cdot\nabla F+\nabla F\cdot p)\Psi_\tau\right\|\]
and by a simple computation, we have
 \[p\cdot\nabla F+\nabla F\cdot p=2\nabla F\cdot p+i\Delta F\]
and we have
 \[
 \|(2\nabla F\cdot p+i\Delta F)\Psi_\tau\|\leq 2\|\nabla F\cdot \nabla \Psi_\tau\|+\|\Delta F\Psi_\tau\|.
 \]
 Since $|\nabla F|(x)\leq b_3\langle x\rangle^{\beta-1}$ and $|\Delta F|(x)\leq b_4 \langle x\rangle^{\beta-2}$, 
 \[\limsup\limits_{\tau\rightarrow+\infty}\|(H-E-(\nabla F)^2)\Psi_\tau\|=0\]
  which implies that 
  \[\limsup\limits_{\tau\rightarrow+\infty}\|(H-E-\alpha^2)\Psi_\tau\|\leq b_5 \gamma\]
By following \cite[equations (2.37) to (2.41)]{FH}, we deduce that

\begin{equation}
\underset{\tau \rightarrow \infty}{\liminf} (\Psi_\tau,E(I)[H,iA_u]E(I)\Psi_\tau)\geq c_0(1-(b_6\gamma)^2).\label{eq:new minor}
\end{equation}

Moreover, since
\begin{equation*}
\limsup\limits_{\tau \rightarrow \infty} (\Psi_\tau,[H,iA_u]\Psi_\tau)\leq 0,
\end{equation*}

we have

\begin{equation}\label{eq:new major}
\underset{\tau \rightarrow \infty}{\limsup} (\Psi_\tau,E(I)[H,iA_u]E(I)\Psi_\tau)\leq b_7 \gamma.
\end{equation}

From \eqref{eq:new minor} and \eqref{eq:new major}, we have
\[c_0(1-(b_6\gamma)^2)\leq b_7 \gamma.\]
Since $c_0$ is a fixed positive number, we have a contradiction for all small enough $\gamma>0$. Thus the theorem is proved.
\qed
\end{proof}

\subsection{The form version}

If we only suppose that $V:\ch^1\rightarrow\ch^{-1}$ is bounded with bound less than one, we have the following

\begin{proof}[Theorem \ref{th:exp bound 2}]
Suppose that $E\notin\ce_u(H)$. We denote $C_i>0$ constant independant of $\epsilon$.

Let $F(x)=\tau \ln(\langle x\rangle(1+\epsilon \langle x\rangle)^{-1})$ and 
$ \Psi_\epsilon =\psi_F/\|\psi_F\|$.
As in \cite{FH}, we can prove that for any bounded set $B$
\[\lim\limits_{\epsilon\rightarrow 0}\int_B|\Psi_\epsilon|^2d^n x=0.\]
By a simple calculus, we have
\[\nabla\psi_F=\nabla F\psi_F+e^F\nabla\psi.\]
So, for any bounded set $B$, since $\nabla F$ and $e^F$ are uniformly bounded in $\epsilon$ on $B$, we have
\begin{eqnarray*}
\left(\int_B|\nabla\Psi_\epsilon|^2d^nx\right)^{1/2}&\leq& \left(\int_B|\nabla F\Psi_\epsilon|^2d^nx\right)^{1/2}+\left(\int_B|e^F\nabla\psi|^2d^nx\right)^{1/2}\|\psi_F\|^{-1}\\
&\leq& C_1\left(\int_B|\Psi_\epsilon|^2d^nx\right)^{1/2}+C_2\left(\int_B|\nabla\psi|^2d^nx\right)^{1/2}\|\psi_F\|^{-1}\\
&\leq& C_1\left(\int_B|\Psi_\epsilon|^2d^nx\right)^{1/2}+C_2\left(\int_{\R^\nu}|\nabla\psi|^2d^nx\right)^{1/2}\|\psi_F\|^{-1}.
\end{eqnarray*}
 Since $V:\ch^1\rightarrow\ch^{-1}$ is bounded with bound less than one, we have 
 \[H+1=\jap{p}(1+\jap{p}^{-1}V\jap{p}^{-1})\jap{p}\]
 which implies that 
\[\jap{p}\psi=(1+\jap{p}^{-1}V\jap{p}^{-1})^{-1}\jap{p}^{-1}(H+1)\psi=(E+1)(1+\jap{p}^{-1}V\jap{p}^{-1})^{-1}\jap{p}^{-1}\psi\]
and so, 
\[\|\nabla \psi\|=\|\frac{p}{\jap{p}}\jap{p}\psi\|\leq C_3\|\psi\|.\]
All of this implies that 
\[\lim\limits_{\epsilon\rightarrow 0}\int_B|\nabla\Psi_\epsilon|^2d^n x=0.\]
Moreover, since $V:\ch^1\rightarrow\ch^{-1}$ is bounded with bound less than one, there is $0<a<1$ and $0<b$ such that
\[\left|(\Psi_\epsilon,V\Psi_\epsilon)\right|\leq a\|\nabla\Psi_\epsilon\|^2+b.\]
So, by \eqref{eq:H psiF}, we have
\[(1-a)\|\nabla\Psi_\epsilon\|^2-b\leq (\Psi_\epsilon,H\Psi_\epsilon)\leq (E+\tau^2).\]
So $\|\nabla\Psi_\epsilon\|$ is bounded as $\epsilon\rightarrow 0$ and with a similar argument, $\|\jap{p}\Psi_\epsilon\|$ is bounded as $\epsilon\rightarrow 0$
So, for all $N>0$, if $\chi_N$ is the characteristic function of $\{x:\jap{x}\leq N\}$, we have
\begin{eqnarray*}
\|\nabla F\nabla\Psi_\epsilon\|&\leq& \|\chi_N\nabla F\nabla\Psi_\epsilon\|+\|(1-\chi_N)\nabla F\nabla\Psi_\epsilon\|\\
&\leq& C_4\|\chi_N\nabla\Psi_\epsilon\|+\tau N^{-1}\|\nabla\Psi_\epsilon\|.
\end{eqnarray*}
Since this inequality is true for all $N>0$ and $\|\nabla\Psi_\epsilon\|$ is bounded as $\epsilon\rightarrow 0$, 
\[\lim\limits_{\epsilon\rightarrow 0}\|\nabla F\nabla\Psi_\epsilon\|=0\]
and as in \cite[equation (2.13)]{FH}, we deduce that 
\[\lim\limits_{\epsilon\rightarrow 0}\|(H-E)\Psi_\epsilon\|=0\]
 which implies 
 \[\begin{cases}
 \lim\limits_{\epsilon\rightarrow 0}\|E(\R\backslash I)\Psi_\epsilon\|=0\\
 \lim\limits_{\epsilon\rightarrow 0}\|(H+i)E(\R\backslash I)\Psi_\epsilon\|=0
 \end{cases}.\]
 As previously, by writing $\jap{p}=(1+\jap{p}^{-1}V\jap{p}^{-1})^{-1}\jap{p}^{-1}(H+1)$, we deduce that 
 \[\lim\limits_{\epsilon\rightarrow0}\|\jap{p}E(\R\backslash I)\Psi_\epsilon\|=0.\]
So if $f_1(\epsilon)=(\Psi_\epsilon, E(\R\backslash I)[H,iA_u]\Psi_\epsilon)$, we have
\begin{eqnarray*}
\lim\limits_{\epsilon\rightarrow0}|f_1(\epsilon)|&\leq& \lim\limits_{\epsilon\rightarrow0}\|\jap{p}E(\R\backslash I)\Psi_\epsilon\|\cdot\|\jap{p}^{-1}[H,iA_u]\jap{p}^{-1}\|\cdot \|\jap{p}\Psi_\epsilon\|\\
&=&0
\end{eqnarray*} 
and simlarly with $f_2(\epsilon)=(\Psi_\epsilon, E(I)[H,iA_u]E(\R\backslash I)\Psi_\epsilon)$.
Remark that we can prove similar things with $F(x)=\alpha\langle x\rangle^\beta+\tau \ln(1+\gamma\langle x\rangle\tau^{-1})$. Thus, by using a similar proof than for Theorem \ref{th:exp bound 1}, Theorem \ref{th:exp bound 2} is proved.
\qed
\end{proof}

\section{Possible eigenvectors can not satisfies sub-exponential bounds}\label{s:abs vp}

In this section, we will prove Theorem \ref{th:abs 1} and Theorem \ref{th:abs 2}.

\begin{proof}[Theorem \ref{th:abs 1}]
In this proof, we will follow the method used in \cite[Theorem 4.18]{CFKS}.

Suppose that Theorem \ref{th:abs 1} is false: there is $\psi \not=0$ such that 
\[\exp(\alpha\langle x\rangle^\beta)\psi\in L^2(\R^\nu)\]
 for all $\alpha>0$, $0<\beta<1$ and $H\psi=E\psi$ with $E>0$.
 For $\alpha>0$, $0<\beta<1$, let $F_\beta(x)=\alpha \langle x\rangle^\beta$. As previously, we denote $\psi_F=\exp(F_\beta(q))\psi$ and $xg_\beta(x)=\nabla F_\beta(x)$. By direct calculation, we have $\nabla F_\beta(x)=\alpha\beta x\langle x\rangle^{\beta-2}$ and
\begin{equation}\label{eq:nablaF}\begin{cases}
|\nabla F_\beta|^2=\alpha^2\beta^2\langle x\rangle^{2\beta-2}(1-\langle x\rangle^{-2})\\
x\nabla(\nabla F_\beta(x))^2=2\alpha^2\beta^2\langle x\rangle^{2\beta-2}(1-\langle x\rangle^{-2})\left(\beta-1+(2-\beta)\langle x\rangle^{-2}\right)\end{cases}.
\end{equation}
 
 By assumptions, $\psi_F\in L^2(\R^\nu)$ for all $\alpha>0$, $0<\beta<1$. Suppose that there is $\delta>-2$, $\delta',\sigma,\sigma'\in\R$ such that $\delta+\delta'>-2$ and \eqref{eq:V,iA petit 1} is true.

Take $\alpha>0$ and $0<\beta<1$. We denote $C$ (possibly different) constants that do not depend on $\alpha$ or $\beta$.
From \eqref{eq:V,iA petit 1}, we derive
\begin{equation}\label{eq:premier minor}
(\psi_F,[H,iA_D]\psi_F)\geq(2+\delta)(\psi_F,\Delta\psi_F)+\delta' (\psi_F,(\nabla F_\beta)^2\psi_F)+(\sigma\alpha+\sigma')\|\psi_F\|^2.
\end{equation}
Since $V$ is $\Delta$-compact, we can find (see \cite{Ka2}), for all $0<\mu<1$, some $C_\mu>0$ such that
\[(\psi_F,\Delta\psi_F)\geq\mu(\psi_F,H\psi_F)-C_\mu\|\psi_F\|^2.\]
Inserting this information in \eqref{eq:premier minor} and using \eqref{eq:H psiF}, we get, for all $0<\mu<1$,
\begin{equation}\label{eq:minor abs}
(\psi_F,[H,iA_D]\psi_F)\geq\left((2+\delta)\mu+\delta'\right) (\psi_F,(\nabla F_\beta)^2\psi_F)+(\sigma\alpha+C-C_\mu)\|\psi_F\|^2.
\end{equation}

By \eqref{eq:Commutator psiF} with $\lambda(x)\equiv1$, we have
\[(\psi_F,[H,iA_D]\psi_F)\leq (\psi_F,((x\nabla)^2g_\beta-x\nabla(\nabla F_\beta(x))^2)\psi_F).\]
Since $|(x\nabla)^2g_\beta|\leq C\alpha$,
\[(\psi_F,[H,iA_D]\psi_F)\leq \alpha C\|\psi_F\|^2-(\psi_F,x\nabla(\nabla F_\beta(x))^2\psi_F).\]
Using \eqref{eq:nablaF} and the fact that
\[2\alpha^2\beta^2(2-\beta)(\psi_F,\langle q\rangle^{2\beta-4}(1-\langle q\rangle^{-2})\psi_F)\geq 0,\] we obtain
\begin{equation}\label{eq:major abs}
(\psi_F,[H,iA_D]\psi_F)\leq \alpha C\|\psi_F\|^2
-2\alpha^2\beta^2(\beta-1)(\psi_F,\langle q\rangle^{2\beta-2}(1-\langle q\rangle^{-2})\psi_F).
\end{equation}
Therefore, if we denote $\Psi_\alpha=\psi_F/\|\psi_F\|$, it follows from \eqref{eq:minor abs} and \eqref{eq:major abs} that
\begin{equation}\label{eq:contradiction}
\alpha^2\beta^2\left(\mu(2+\delta)+\delta'+2\beta-2 \right)(\Psi_\alpha,\langle q\rangle^{2\beta-2}(1-\langle q\rangle^{-2})\Psi_\alpha)\leq \alpha C+C.
\end{equation}
Since $2+\delta+\delta'>0$, we can choose $0<\mu<1$ such that $(2+\delta)\mu+\delta'>0$.
Taking $\beta-1$ small enough, we can ensure that
\begin{equation}\label{eq: choix beta}
\tau=\beta^2\left(\mu(2+\delta)+\delta'+2\beta-2 \right)>0.
\end{equation}
Remark that we can suppose that $\beta\geq 1/2$.
Since $t^{\beta-1}\exp(t^\beta)\geq 1$ for all $t\geq1$, we derive from \eqref{eq:contradiction} that, for $\alpha\geq1$,
\begin{equation}\label{eq:contradiction2}
(\alpha+1)C\geq \alpha^2\tau(\Psi_{\alpha-1},(1-\jap{q}^{-2})\Psi_{\alpha-1}).
\end{equation}
Since $\psi\not=0$, we can find $\epsilon>0$ such that $\|\mathbb{1}_{|\cdot|\geq2\epsilon}(q)\psi\|>0$.
For all $\alpha>0$,
\begin{eqnarray*}
\frac{\|\mathbb{1}_{|\cdot|\leq\epsilon}(q)\exp(\alpha\langle q\rangle^\beta)\psi\|^2}{\|\exp(\alpha\langle q\rangle^\beta)\psi\|^2}&\leq&\frac{\exp(2\alpha\langle \epsilon\rangle^\beta)\|\mathbb{1}_{|\cdot|\leq\epsilon}(q)\psi\|^2}{\exp(2\alpha\langle 2\epsilon\rangle^\beta)\|\mathbb{1}_{|\cdot|\geq2\epsilon}(q)\psi\|^2}\\
&\leq&\exp\left(2\alpha(\langle \epsilon\rangle^\beta-\langle 2\epsilon\rangle^\beta)\right)\frac{\|\psi\|^2}{\|\mathbb{1}_{|\cdot|\geq2\epsilon}(q)\psi\|^2}
\end{eqnarray*}
and
\begin{eqnarray*}
(\Psi_\alpha,(1-\langle q\rangle^{-2})\Psi_\alpha)&\geq&(1-\langle \epsilon\rangle^{-2})\frac{\|\mathbb{1}_{|\cdot|\geq\epsilon}(q)\psi_F\|^2}{\|\psi_F\|^2}\\
&\geq&(1-\langle \epsilon\rangle^{-2})\left(1-\frac{\|\mathbb{1}_{|\cdot|\leq\epsilon}(q)\psi_F\|^2}{\|\psi_F\|^2}\right)\\
&\geq&(1-\langle \epsilon\rangle^{-2})\biggl(1-C_\epsilon\exp\left(2\alpha(\langle \epsilon\rangle^\beta-\langle 2\epsilon\rangle^\beta)\right)\biggr)
\end{eqnarray*}
where $C_\epsilon=\frac{\|\psi\|^2}{\|\mathbb{1}_{|\cdot|\geq2\epsilon}(q)\psi\|^2}$.
So there exist $C_1>0$ and $\alpha_0>0$ such that for all $\alpha\geq\alpha_0$, 
\[(\Psi_\alpha,(1-\langle q\rangle^{-2})\Psi_\alpha)\geq C_1.\]
This implies, together with \eqref{eq:contradiction2} that, for $\alpha\geq\alpha_0$,
\[(\alpha+1)C\geq \alpha^2\tau C_1\]
which is false for $\alpha$ large enough.
\qed\end{proof}


\begin{proof}[Theorem \ref{th:abs 2}]
Suppose that $V:\ch^1\rightarrow\ch^{-1}$ is bounded and that Theorem \ref{th:abs 2} is false. We have:
\begin{eqnarray*}
(\psi_F,H\psi_F)&=&(\psi_F,\Delta\psi_F)+(\psi_F,V\psi_F)\\
&\leq&(\psi_F,\Delta\psi_F)+\|\jap{p}^{-1}V\jap{p}^{-1}\|\|\jap{p}\psi_F\|^2\\
&\leq&(\psi_F,\Delta\psi_F)+\|\jap{p}^{-1}V\jap{p}^{-1}\|\biggl((\psi_F,\Delta\psi_F)+\|\psi_F\|^2\biggr),
\end{eqnarray*}
which implies 
\[(\psi_F,\Delta\psi_F)\geq v(\psi_F,H\psi_F)-v\|\jap{p}^{-1}V\jap{p}^{-1}\|\|\psi_F\|^2,\]
where $v=(1+\|\jap{p}^{-1}V\jap{p}^{-1}\|)^{-1}$.
Using \eqref{eq:V,iA petit 2}, we obtain
\begin{eqnarray}
(\psi_F,[H,iA_D]\psi_F)&\geq&(2+\delta)(\psi_F,\Delta\psi_F)+\delta' (\psi_F,(\nabla F_\beta)^2\psi_F)+(\sigma\alpha+\sigma')\|\psi_F\|^2\nonumber\\
&\geq&(2+\delta)v(\psi_F,H\psi_F)+\delta' (\psi_F,(\nabla F_\beta)^2\psi_F)\nonumber\\
& &+(\sigma\alpha+C)\|\psi_F\|^2\nonumber\\
&\geq&\left((2+\delta)v+\delta'\right)(\psi_F,|\nabla F_\beta|^2\psi_F)+(\sigma\alpha+C)\|\psi_F\|^2.
\end{eqnarray}
By assumptions, $(2+\delta)v+\delta'>0$. Thus, we can choose $0<\beta<1$ such that
\[\tau=\beta^2\left((2+\delta)v+\delta'+2\beta-2 \right)>0.\]
Following the last lines of the proof of Theorem \ref{th:abs 1}, we get a contradiction for $\alpha$ large enough.
\qed
\end{proof}
\section{Concrete potentials}\label{s:examples}
In this section, we study the concrete potentials that we mentioned in the several remarks following our results in Section \ref{s:Intro}.
\subsection{Preliminary results}
We want to apply Theorem \ref{th:abs 1} and Theorem \ref{th:abs 2} to this concrete potentials. We thus have to check the validity of \eqref{eq:V,iA petit 1} and \eqref{eq:V,iA petit 2} for them. To this end, we shall need the following
\begin{lemma}\label{l:deriv sr}
Let $W$ be a bounded real valued function such that $|q|W$ is bounded ($W$ is of short range type for example) and the distributionnal $\nabla W$ is locally in $L^\infty$.
Let $V=|q|^{-1}q\cdot\nabla W+B+V_L$ with $B$ a bounded real valued function such that $qB$ is bounded and $V_L$ a bounded real valued function such that there is $\theta>0$ with $\jap{q}^\theta V_L$ and $q\nabla V_L$ are bounded ($V_L$ is a long range potential). Let $\psi\in L^2$ such that $H\psi=E\psi$ with $E>0$.
 For $\alpha>0$, $0<\beta<1$, let $F_\beta(x)=\alpha \langle x\rangle^\beta$ and $\psi_F=e^F\psi$. As in Theorem \ref{th:abs 2}, suppose that $\psi_F\in L^2$ for all $\alpha>0$, $0<\beta<1$.
Then, for all $\epsilon>0$, there is $C_\epsilon\in\R$, independent of $\alpha,\beta$, such that
\begin{multline}\label{eq:deriv sr}
2\Re(qV\psi_F,\nabla\psi_F)\geq-\biggl(3\epsilon+4\||q|W\|\biggr)\|\nabla\psi_F\|^2
-4\||q|W\|\cdot\|\nabla F\psi_F\|^2\\-C_\epsilon(\alpha+1)\|\psi_F\|^2.
\end{multline}
\end{lemma}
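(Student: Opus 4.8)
The plan is to decompose $V = V_W + B + V_L$ with $V_W = |q|^{-1} q\cdot\nabla W$, and to estimate the contribution of each piece to $2\Re(qV\psi_F,\nabla\psi_F)$ separately. The terms involving $B$ and $V_L$ are the easy ones: since $qB$ is bounded, $|2\Re(qB\psi_F,\nabla\psi_F)| \le 2\|qB\|\,\|\psi_F\|\,\|\nabla\psi_F\| \le \epsilon\|\nabla\psi_F\|^2 + \epsilon^{-1}\|qB\|^2\|\psi_F\|^2$ by Cauchy--Schwarz and Young's inequality, which is absorbed into the $\epsilon\|\nabla\psi_F\|^2$ and $C_\epsilon\|\psi_F\|^2$ terms. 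For the long-range part, I would integrate by parts: $2\Re(qV_L\psi_F,\nabla\psi_F) = \Re(V_L\psi_F, (q\cdot\nabla + \nabla\cdot q)\psi_F)$ splits into a term with $q\cdot\nabla V_L$ (bounded by hypothesis) acting on $|\psi_F|^2$ and a term with $\nu V_L$ (bounded), both of which give $C\|\psi_F\|^2$; more precisely $\Re(qV_L\psi_F,\nabla\psi_F) = -\tfrac12(\psi_F,(\nu V_L + q\cdot\nabla V_L)\psi_F)$, contributing only a $C\|\psi_F\|^2$ term with $C$ independent of $\alpha,\beta$.

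The main work is the short-range-derivative term $2\Re(qV_W\psi_F,\nabla\psi_F) = 2\Re(q|q|^{-1}(q\cdot\nabla W)\psi_F,\nabla\psi_F)$. The key idea is to rewrite $q\cdot\nabla W$ as a commutator-type expression so that the derivative on $W$ can be moved off by integration by parts, producing terms where $W$ (and $|q|W$) appear undifferentiated. Concretely, I would write $\Re(q|q|^{-1}(q\cdot\nabla W)\psi_F,\nabla\psi_F)$ and integrate by parts in the $\nabla W$ factor: each derivative that falls on the coefficient $q|q|^{-1}\otimes q$ produces a bounded (indeed $O(|q|^{-1})$, hence bounded) factor times $W$, and when it falls on $\psi_F$ or on $\overline{\nabla\psi_F}$ it produces terms of the form $\Re(|q|W\,\partial_j\psi_F,\partial_k\psi_F)$ or, after using $\nabla\psi_F = \nabla F\,\psi_F + e^F\nabla\psi$ and then $\nabla(e^F\nabla\psi)$-type manipulations, terms of the form $\|q|W\|\cdot\|\nabla F\psi_F\|^2$ and $\|q|W\|\cdot\|\nabla\psi_F\|^2$. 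Tracking the constants, the worst coefficients are the ones multiplying $\|\nabla\psi_F\|^2$ and $\|\nabla F\psi_F\|^2$; careful bookkeeping (splitting symmetric real parts, using $2|ab|\le a^2+b^2$ with the right weights) should yield precisely the stated $4\||q|W\|$ in front of each, with the leftover $2\epsilon$ (two more applications of Young's inequality for the lower-order cross terms coming from $\Delta F$ and from $[\nabla F, \cdot]$) completing the $3\epsilon$ in the bound.

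The hardest part will be the bookkeeping of constants in the short-range term: one must be sure that the coefficient of $\|\nabla\psi_F\|^2$ and of $\|\nabla F\psi_F\|^2$ is exactly $4\||q|W\|$ and not something larger, which forces a careful choice of how to split each real part and which factor to put the $\epsilon$-weight on. A secondary subtlety is justifying the integrations by parts: $\nabla W$ is only a locally-$L^\infty$ distribution, so one should first regularize (or note that $|q|W \in L^\infty$ and $W \to 0$ suitably makes all boundary terms vanish), and one must check that $\psi_F \in \ch^1$ — which follows, as in Theorem~\ref{th:exp bound 2}, from $H\psi = E\psi$ together with $V:\ch^1\to\ch^{-1}$ bounded and $\nabla F$ bounded for each fixed $\alpha,\beta$ — so that all the quantities $\|\nabla\psi_F\|$, $\|\nabla F\psi_F\|$ appearing in \eqref{eq:deriv sr} are finite. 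Once these points are settled, summing the three contributions gives the claimed inequality.
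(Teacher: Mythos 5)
Your treatment of the $B$ and $V_L$ pieces is fine and matches the paper, and the overall strategy (isolate the short-range-derivative part, move the derivative off $W$ by an integration by parts / commutator $\nabla W=[p,iW]$) is also the paper's. But the proposal has a genuine gap exactly at the point where the lemma is hard. When you integrate by parts in the $\nabla W$ factor, the derivative landing on $\overline{\nabla\psi_F}$ does \emph{not} produce terms of the form $\Re(|q|W\,\partial_j\psi_F,\partial_k\psi_F)$: it produces second-derivative terms, of the type $\bigl(|q|W\psi_F,\Delta\psi_F\bigr)$ (or radial second derivatives, depending on how you contract the indices), and these cannot be absorbed into $\|\nabla\psi_F\|^2$ by bookkeeping alone. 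The paper's proof handles this by substituting the conjugated eigenvalue equation \eqref{eq:H(F)psiF}, i.e.
\[
\Delta\psi_F=(\nabla F)^2\psi_F-2\nabla F\cdot\nabla\psi_F-\Delta F\,\psi_F+E\psi_F-V\psi_F,
\]
and this substitution is precisely the mechanism that produces the $\|\nabla F\psi_F\|^2$ term and fixes the constants ($2\||q|W\|$ directly from $(\nabla F)^2$, plus $2\||q|W\|$ on each of $\|\nabla F\psi_F\|^2$ and $\|\nabla\psi_F\|^2$ from the cross term $-2\nabla F\cdot\nabla\psi_F$ via Young with weight $1$, giving the stated $4\||q|W\|$). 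Your alternative suggestion, ``use $\nabla\psi_F=\nabla F\,\psi_F+e^F\nabla\psi$ and $\nabla(e^F\nabla\psi)$-type manipulations,'' is at best an implicit version of the same substitution (since $e^F\Delta\psi=(E-V)\psi_F$), but you never invoke $H\psi=E\psi$ at this stage, so as written the second-derivative term is simply not controlled and there is no identifiable source for the $\|\nabla F\psi_F\|^2$ term with the claimed coefficient.

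A second, related omission: once the eigenvalue equation is used, the potential $V$ reappears in the term $\bigl((1-\kappa(|q|))|q|W\psi_F,V\psi_F\bigr)$, and $V$ contains $q\cdot\nabla W$, which is \emph{not} bounded ($\nabla W$ is only locally $L^\infty$). This term cannot be dumped into $C\|\psi_F\|^2$; the paper handles it by the further trick $W\nabla W=\tfrac12\nabla(W^2)=\tfrac12[p,iW^2]$ together with the boundedness of $qW^2$, which after one more integration by parts yields $\epsilon\|\nabla\psi_F\|^2+C\|\psi_F\|^2$ (this is one of the three $\epsilon$'s in the final bound). Your plan never addresses this self-interaction term, and the sentence ``careful bookkeeping should yield precisely the stated $4\||q|W\|$'' glosses over exactly these two steps, which are the substance of the proof. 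The peripheral points you do raise (regularization of $\nabla W$, $\psi_F\in\ch^1$ for fixed $\alpha,\beta$, uniformity of $C_\epsilon$ in $\alpha,\beta$ with the $\alpha$-dependence isolated in the $\Delta F$ and lower-order terms) are correct but secondary.
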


\begin{proof}[Lemma \ref{l:deriv sr}]
To begin, remark that since $|q|W$ is bounded, $W$ vanishes at infinity. Thus, by writing $\nabla W=[p,i W]$, we can show that $|q|^{-1}q \cdot\nabla W:\ch^1\rightarrow\ch^{-1}$ is compact and, by sum, that $V:\ch^1\rightarrow\ch^{-1}$ is compact.

 Let $\epsilon>0$. To simplify notations, we denote by $C,D_\epsilon$ possibly different constants independent of $\alpha,\beta$, where $D_\epsilon$ may depends on $\epsilon$. As in the proof of Theorem \ref{th:exp bound 2}, we can show that $\nabla\psi_F\in L^2$ for all $\alpha>0, 0<\beta<1$.
Recall that for $a,b\in\ch$, $\eta>0$, we have
\begin{eqnarray}\label{eq:major prod scal}
2|(a,b)|&\leq&2\|a\|\cdot\|b\|\nonumber\\
&\leq&\eta\|a\|^2+\eta^{-1}\|b\|^2.
\end{eqnarray}

Since $|q|V=q\cdot\nabla W+|q|B+|q|V_L$, we can write $qV=|q|\nabla W+qB+qV_L$.
Let $\kappa\in C^\infty_c(\R,\R)$ such that $\kappa(t)=1$ if $|t|<1$, $0\leq\kappa\leq1$.
\begin{eqnarray*}
\left|2\Re\biggl(qV\psi_F,\nabla\psi_F\biggr)\right|&\leq&2\left|\Re\biggl((\kappa(|q|)+(1-\kappa(|q|)))|q|\nabla W \psi_F,\nabla\psi_F\biggr)\right|\\
& &+2\left|\Re\biggl(q B+q V_L\psi_F,\nabla\psi_F\biggr)\right|,
\end{eqnarray*}
and using $\nabla W=[p,iW]$,
\begin{eqnarray}\label{eq:major lemma 5.1}
& &\left|2\Re\biggl(qV\psi_F,\nabla\psi_F\biggr)\right|\nonumber\\
&\leq&2\left|\biggl((1-\kappa(|q|))|q|W \nabla\psi_F,\nabla\psi_F\biggr)\right|+2\left|\biggl((1-\kappa(|q|))|q|W\psi_F,\Delta\psi_F\biggr)\right|\nonumber\\
& &+2\left|\biggl((\kappa(|q|)|q|\nabla W+qB+[(1-\kappa(|q|))|q|,p]W)\psi_F,\nabla\psi_F\biggr)\right|\nonumber\\
& &+\left|2\Re\biggl(qV_L\psi_F,\nabla\psi_F\biggr)\right|.
\end{eqnarray}
Since $\kappa(|q|)|q|\nabla W+qB+[(1-\kappa(|q|))|q|,p]W$ is bounded, by \eqref{eq:major prod scal}, we can see that the third term on the r.h.s. is less or equal to a term of the form $\epsilon\|\nabla\psi_F\|^2+D_\epsilon\|\psi_F\|^2$. 

For the last term on the r.h.s., remark that
\begin{eqnarray*}
\left|2\Re\biggl(qV_L\psi_F,\nabla\psi_F\biggr)\right|&\leq&\left|\biggl(\psi_F,q\cdot\nabla V_L\psi_F\biggr)\right|+\nu\left|\biggl(\psi_F,V_L\psi_F\biggr)\right|.
\end{eqnarray*}
Thus, since $V_L$ and $q\nabla V_L$ are bounded, there is $C>0$ such that the last term on the r.h.s. of \eqref{eq:major lemma 5.1} is less or equal to $C\|\psi_F\|^2$.

Since $0\leq\kappa\leq1$, we can remark that the first term on the r.h.s. of \eqref{eq:major lemma 5.1} is less or equal to $2\||q|W\|\cdot\|\nabla\psi_F\|^2$.

By \eqref{eq:H(F)} and \eqref{eq:H(F)psiF}, we can write
\begin{eqnarray*}
\Delta\psi_F&=&H\psi_F-V\psi_F\\
&=&(\nabla F)^2\psi_F-(ip\nabla F+i\nabla Fp)\psi_F+E\psi_F-V\psi_F\\
&=&(\nabla F)^2\psi_F-2\nabla F\nabla\psi_F-\Delta F\psi_F+E\psi_F-V\psi_F.
\end{eqnarray*}
Inserting this information in the second term on the r.h.s. of \eqref{eq:major lemma 5.1}, we get
\begin{eqnarray}\label{eq:major last term}
& &2\left|((1-\kappa(|q|))|q|W\psi_F,\Delta\psi_F)\right|\nonumber\\
&\leq&2\|(1-\kappa(|q|))|q|W\|\|\nabla F\psi_F\|^2+4\left|((1-\kappa(|q|))|q|W\nabla F\psi_F,p\psi_F)\right|\nonumber\\
& &+2\left|((1-\kappa(|q|))|q|W\psi_F,V\psi_F)\right|\nonumber\\
& &+2\left|((1-\kappa(|q|))|q|W\psi_F,\Delta F\psi_F)\right|+2\|(1-\kappa(|q|))|q|W\||E|\|\psi_F\|^2.
\end{eqnarray}
By \eqref{eq:major prod scal} with $\eta=1$, we can remark that the second term on the r.h.s. of \eqref{eq:major last term} is bounded above by $2\||q|W\|\|\nabla F\psi_F\|^2+2\||q|W\|\|\nabla\psi_F\|^2$. Since $\alpha^{-1}\Delta F$ is bounded, the 2 last terms on the r.h.s. of \eqref{eq:major last term} are less or equal to $C(\alpha+1)\|\psi_F\|^2$.

For the third term, we use that $B$, $V_L$ and $|q|W$ are bounded to arrive at
\begin{eqnarray*}
& &2\left|((1-\kappa(|q|))|q|W\psi_F,V\psi_F)\right|\\
&=&2\left|((1-\kappa(|q|))W\psi_F,(q\cdot\nabla W+|q|B+|q|V_L)\psi_F)\right|\\
&\leq&2\left|((1-\kappa(|q|))q\psi_F, W\nabla W\psi_F)\right|+C\|\psi_F\|^2\\
&\leq&2\left|((1-\kappa(|q|))q\psi_F, [p,iW^2]\psi_F)\right|+C\|\psi_F\|^2.
\end{eqnarray*}
Since $qW^2$ and $W$ are bounded, by \eqref{eq:major prod scal},
\begin{eqnarray*}
\left|((1-\kappa(|q|))q\psi_F,[p,iW^2]\psi_F)\right|&\leq&\left|(p(1-\kappa(|q|))q\psi_F,W^2\psi_F)\right|\\
& &+\left|((1-\kappa(|q|))qW^2\psi_F,p\psi_F)\right|\\
&\leq&2\left|((1-\kappa(|q|))qW^2\psi_F,p\psi_F)\right|\\
& &+\left|(W^2[p,(1-\kappa(|q|))q]\psi_F,\psi_F)\right|\\
&\leq&C\|\psi_F\|^2+\epsilon\|\nabla\psi_F\|^2.
\end{eqnarray*}
Thanks to these inequalities, we derive from \eqref{eq:major lemma 5.1}
\begin{eqnarray*}
\biggl|2\Re(|q|\nabla W\psi_F,\nabla\psi_F)\biggr|&\leq&(3\epsilon+4\||q|W\|)\|\nabla\psi_F\|^2\\
& &+C_\epsilon(\alpha+1)\|\psi_F\|^2+4\||q|W\|\|\nabla F\psi_F\|^2.
\end{eqnarray*}
which implies \eqref{eq:deriv sr}.
\qed\end{proof}
Remark that, in \eqref{eq:deriv sr}, we can replace $\||q|W\|$ by $\|((1-\kappa(|q|))|q|W\|$. In particular, if $|q|W$ vanishes at infinity,  we can choose the function $\kappa$ such that $\|((1-\kappa(|q|))|q|W\|\leq\epsilon$.

\begin{corollary}\label{c:deriv sr}
Let $W$ be a bounded real valued function such that $|q|W$ is bounded ($W$ is of short range type for example) and the distributionnal $\nabla W$ is locally in $L^\infty$.
Let $V=|q|^{-1}q\cdot\nabla W+B+V_L$ with $B$ a bounded real valued function such that $qB$ is bounded and $V_L$ a bounded real valued function such that $q\nabla V_L$ is bounded. If $\||q|W\|$ is small enough, we can choose $\epsilon>0$ small enough such that $V$ satisfies \eqref{eq:V,iA petit 1} and \eqref{eq:V,iA petit 2}.
\end{corollary}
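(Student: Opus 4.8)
The plan is to show that the potential $V=|q|^{-1}q\cdot\nabla W+B+V_L$ satisfies both inequalities \eqref{eq:V,iA petit 1} and \eqref{eq:V,iA petit 2} by relating the commutator form $(\psi_F,[V,iA_D]\psi_F)$ to the quantity $2\Re(qV\psi_F,\nabla\psi_F)$ controlled in Lemma \ref{l:deriv sr}. First I would recall that $A_D=\tfrac12(p\cdot q+q\cdot p)$, so that on suitable vectors $[V,iA_D]=q\cdot\nabla V$ in form sense; more usefully, since $\psi_F\in\ch^1$ with $\nabla\psi_F\in L^2$ (as established in the proof of Theorem \ref{th:exp bound 2}), one can write $(\psi_F,[V,iA_D]\psi_F)$ in the symmetric form $2\Re(qV\psi_F,p\psi_F)+(\text{bounded})\|\psi_F\|^2$, i.e. up to a multiple of $\|\psi_F\|^2$ it equals $-2\Re(qV\psi_F,\nabla\psi_F)$ in the convention $p=-i\nabla$ (the exact real/imaginary bookkeeping is routine and I would not grind through it here).

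Next I would apply Lemma \ref{l:deriv sr}. By the remark following that lemma, since $|q|W$ is bounded, $W$ vanishes at infinity, so $(1-\kappa(|q|))|q|W$ has norm as small as we like by choosing the cutoff $\kappa$ supported far out; in particular we may replace $\||q|W\|$ in \eqref{eq:deriv sr} by a quantity $\leq\epsilon'$ for any prescribed $\epsilon'>0$. Combined with the freedom to take $\epsilon>0$ small, Lemma \ref{l:deriv sr} then gives
\[
(\psi_F,[V,iA_D]\psi_F)\geq -\delta_0\|\nabla\psi_F\|^2-\delta_0'\|\nabla F_\beta\psi_F\|^2-C_\epsilon(\alpha+1)\|\psi_F\|^2,
\]
where $\delta_0=3\epsilon+4\|(1-\kappa(|q|))|q|W\|$ and $\delta_0'=4\|(1-\kappa(|q|))|q|W\|$ can both be made arbitrarily small. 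Since $\|\nabla\psi_F\|^2=(\psi_F,\Delta\psi_F)$ and $\|\nabla F_\beta\psi_F\|^2=(\psi_F,(\nabla F_\beta)^2\psi_F)$, this is exactly an inequality of the form \eqref{eq:V,iA petit 1}/\eqref{eq:V,iA petit 2} with $\delta=-\delta_0$, $\delta'=-\delta_0'$, $\sigma=-C_\epsilon$, $\sigma'=-C_\epsilon$.

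Finally I would check the smallness conditions required by Theorems \ref{th:abs 1} and \ref{th:abs 2}: we need $\delta>-2$ and $\delta+\delta'>-2$ (resp. $\delta+(1+\|\jap{p}^{-1}V\jap{p}^{-1}\|)\delta'>-2$). Since $\delta=-\delta_0$ and $\delta'=-\delta_0'$ with $\delta_0,\delta_0'\to 0$ as $\epsilon\to0$ and as the support of $\kappa$ recedes, and since $\|\jap{p}^{-1}V\jap{p}^{-1}\|$ is a fixed finite number (because $V:\ch^1\to\ch^{-1}$ is compact, as noted in the proof of Lemma \ref{l:deriv sr}), we may choose $\epsilon$ and $\kappa$ so that $\delta_0+(1+\|\jap{p}^{-1}V\jap{p}^{-1}\|)\delta_0'<2$; here the hypothesis that $\||q|W\|$ be small enough is what is being used, since even the "best" cutoff leaves us with the genuinely non-removable part of $W$ near infinity only if $|q|W$ does not itself vanish at infinity — in the stated corollary we simply assume this norm small. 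The main obstacle is the careful bookkeeping in the first step: making precise in which form-space the identity $(\psi_F,[V,iA_D]\psi_F)=-2\Re(qV\psi_F,\nabla\psi_F)+O(\|\psi_F\|^2)$ holds, given that $V$ is only a map $\ch^1\to\ch^{-1}$ and $A_D$ is unbounded; this is handled exactly as in the proof of Lemma \ref{l:Commutator psiF}, approximating by the regularized vectors and using that all operators involved are bounded in the relevant weighted $\ch^1$ norm. \qed
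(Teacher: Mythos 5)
Your overall strategy is the same as the paper's: reduce $(\psi_F,[V,iA_D]\psi_F)$ to the quantity $2\Re(qV\psi_F,\nabla\psi_F)$ controlled by Lemma \ref{l:deriv sr}, absorb the leftover term, and then check the smallness conditions of Theorems \ref{th:abs 1} and \ref{th:abs 2}. However, the "routine bookkeeping" you skip contains the two points where the statement could actually fail, and as written both are off. First, the sign: the correct identity (integration by parts, $p=-i\nabla$) is
\[(\psi_F,[V,iA_D]\psi_F)=2\Re(qV\psi_F,\nabla\psi_F)+\nu(\psi_F,V\psi_F),\]
with a plus sign in front of $2\Re(qV\psi_F,\nabla\psi_F)$. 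With the minus sign you state, the lower bound \eqref{eq:deriv sr} of Lemma \ref{l:deriv sr} would only give an \emph{upper} bound on the commutator, which is useless for \eqref{eq:V,iA petit 1} and \eqref{eq:V,iA petit 2}; your displayed inequality afterwards silently uses the correct sign. Second, the leftover term $\nu(\psi_F,V\psi_F)$ is \emph{not} $O(\|\psi_F\|^2)$: $V$ contains $|q|^{-1}q\cdot\nabla W$, which is only a form perturbation (its derivative part is merely locally bounded), so $(\psi_F,V\psi_F)$ cannot be estimated by $C\|\psi_F\|^2$ directly. One must, as in the paper, write $\nabla W=[p,iW]$ away from the origin and bound this term by $\epsilon\|\nabla\psi_F\|^2+C_\epsilon\|\psi_F\|^2$; this costs an extra $\epsilon$ in the coefficient $\delta$, harmless but necessary, and it is exactly the kind of term the corollary is about.

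There is also an incorrect intermediate claim in your smallness discussion: from $|q|W$ bounded you deduce $W\to0$ at infinity (true) and then that $\|(1-\kappa(|q|))|q|W\|$ can be made arbitrarily small by pushing the cutoff out (false in general; take $W(x)=\sin|x|/|x|$, for which $|q|W$ is bounded but does not vanish at infinity). The remark after Lemma \ref{l:deriv sr} only allows this replacement when $|q|W$ itself vanishes at infinity, which is not assumed in Corollary \ref{c:deriv sr}; here one must simply invoke the hypothesis that $\||q|W\|$ is small, together with $\epsilon$ small, to get $\delta>-2$, $\delta+\delta'>-2$ and $\delta+(1+\|\jap{p}^{-1}V\jap{p}^{-1}\|)\delta'>-2$. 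You do say this in your last sentence, so the conclusion stands, but the earlier cutoff argument should be removed. With these three corrections your proof coincides with the paper's.
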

Remark that, if we denote $g$ the function such that $xg(x)=\nabla F_\beta(x)$, the first term on the r.h.s. of \eqref{eq:major lemma 5.1} is less or equal to $\frac{C_1}{\alpha\beta}\left\|g^{1/2}A_D\psi_F\right\|+C_2\|\psi_F\|^2$ where $C_1,C_2$ are independent of $\alpha,\beta$. In particular, if $\alpha\beta$ is large enough, this term appears in \eqref{eq:V,iA petit 1'} and \eqref{eq:V,iA petit 2'}, and we can use these assumptions instead of \eqref{eq:V,iA petit 1} and \eqref{eq:V,iA petit 2}.

\begin{proof}[Corollary \ref{c:deriv sr}]
Let $\psi$ and $\psi_F$ as in Lemma \ref{l:deriv sr}. Then
\begin{eqnarray*}
(\psi_F,[V,iA_D]\psi_F)&=&(\psi_F,iVq\cdot p\psi_F)-(\psi_F,iq\cdot pV\psi_F)\\
&=&(qV\psi_F, \nabla\psi_F)-( \nabla\psi_F,qV\psi_F)-\nu(\psi_F,V\psi_F)\\
&=&2\Re(qV\psi_F, \nabla\psi_F)-\nu(\psi_F,V\psi_F).
\end{eqnarray*}
Let $\kappa\in C^\infty_c(\R,\R)$ such that $\kappa(t)=1$ if $|t|<1$, $0\leq\kappa\leq1$.
For second term, we have
\begin{eqnarray*}
\nu(\psi_F,V\psi_F)&=&\nu(\psi_F,\kappa(|q|)V\psi_F)+\nu(\psi_F,(1-\kappa(|q|))V\psi_F)\\
&=&\nu(\psi_F,(1-\kappa(|q|))(|q|^{-1}q\cdot\nabla W+B+V_L)\psi_F)+\\
& &\nu(\psi_F,\kappa(|q|)V\psi_F).
\end{eqnarray*}
By writing $\nabla W=[p,iW]$, since $B, \kappa(|q|)V, V_L$ and \\$[(1-\kappa(|q|))|q|^{-1}q,p]$ are bounded, for all $\epsilon>0$, by \eqref{eq:major prod scal},
\[\left|\nu(\psi_F,V\psi_F)\right|\leq\epsilon\|\nabla\psi_F\|^2+C_\epsilon\|\psi_F\|^2.\]
Using this and Lemma \ref{l:deriv sr}, we obtain \eqref{eq:V,iA petit 1} and/or \eqref{eq:V,iA petit 2} if $\||q|W\|$ and $\epsilon$ are small enough. \qed
\end{proof}

\subsection{A class of oscillating potential}\label{ss:oscillant}

Let $v\in C^1(\R^n,\R^n)$ with bounded derivative. Let $V_{lr},V_{sr},V_c,\tilde{V}_{sr}$ such that $V_c$ is compactly support and such that there is $\rho_{lr},\rho_{sr}>0$ with $\jap{q}^{1+\rho_{sr}}V_{sr}$, $\jap{q}^{1+\rho_{sr}}\tilde{V}_{sr}$, $\jap{q}^{\rho_{lr}}V_{lr}$, $\jap{q}^{\rho_{lr}}q\cdot\nabla V_{lr}$ are bounded ($V_{lr}$ is a long-range potential and $V_{sr}$ and $\tilde{V}_{sr}$ are  short-range potentials). Moreover, we suppose that $V_c:\ch^1\rightarrow\ch^{-1}$ is compact and that there is $\epsilon_c>0$ and $\lambda_c\in\R$ such that, for all $\phi\in\cd(H)\cap\cd(A_D)$, 
\[(\phi,[V_c,iA_D]\phi)\geq (\epsilon_c-2)(\phi,\Delta\phi)+\lambda_c\|\phi\|^2.\]

Let $\zeta,\theta\in\R$, $k>0$, $w\in\R^*$ and $\kappa\in C^\infty_c(\R,\R)$ such that $\kappa=1$ on $[-1,1]$ and $0\leq \kappa\leq 1$.
Let \begin{equation}\label{eq:oscillant}
W_{\zeta \theta}(x)=w(1-\kappa(|x|))\frac{\sin(k|x|^\zeta)}{| x|^\theta}.
\end{equation}
Remark that if we take $\zeta=\theta=1$, this potential has the form of the Wigner-von Neuman potential for which we know that $\frac{k^2}{4}$ is an eigenvalue. As pointed out in \cite{JMb}, Corollary \ref{c:FH} applies with $V_{lr}+V_{sr}+W_{\zeta \theta}$ as potential if $\theta>0$ and $\theta>\zeta$ or if $\theta>1$.  In \cite{JMb}, it is claimed that Corollary \ref{c:FH} applies when $1/2\geq\theta>0$, $\zeta>1$, $\zeta+\theta>2$ and $|w|$ small enough. The corresponding proof, however, is not sufficient. Here, thanks to our main result, we are able to prove the following
 
 \begin{proposition}\label{prop:oscillant}
 Let $V=V_{lr}+V_{sr}+v\cdot\nabla \tilde{V}_{sr}+V_c+W_{\zeta \theta}$ and let $H=\Delta+V an$.
 \begin{enumerate}
\item If $\zeta+\theta>1$, then $V:\ch^1\rightarrow\ch^{-1}$ is compact.
 
 \smallskip
 
\item If $\zeta+\theta\geq3/2$, then a possible eigenvector of $H$ has sub-exponential bounds.

\smallskip

\item If $\zeta+\theta>3/2$, then $A_u$ is conjugate to $H$ on all compact subset of $(0,+\infty)$ for all $u$ bounded. In particular the sub-exponential bounds are unlimited.
 
 \smallskip
 
 \item Let $\theta\in\R$, $\zeta>1$ and $\zeta+\theta=2$. If $\left|\frac{w}{k\zeta}\right|<\frac{\epsilon_c}{6}$, then $H=\Delta+W_{\zeta \theta}$ has no positive eigenvalue;
 
 \smallskip
 
 \item If $\zeta+\theta>2$, then $H=\Delta+W_{\zeta \theta}$ has no positive eigenvalue.
 \end{enumerate}
 \end{proposition}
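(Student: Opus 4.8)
The whole argument turns on the elementary identity, valid for $r>0$,
\[
\frac{\sin(kr^\zeta)}{r^\theta}=\partial_r\!\Bigl(-\frac{\cos(kr^\zeta)}{k\zeta\,r^{\zeta+\theta-1}}\Bigr)+\frac{\zeta+\theta-1}{k\zeta}\cdot\frac{\cos(kr^\zeta)}{r^{\zeta+\theta}}.
\]
Multiplying by $w(1-\kappa)$ and collecting the compactly supported terms produced when $\partial_r$ hits $\kappa$, this gives the decomposition $W_{\zeta\theta}=|q|^{-1}q\cdot\nabla W_0+B_0$, where $W_0(x)=-\dfrac{w(1-\kappa(|x|))\cos(k|x|^\zeta)}{k\zeta\,|x|^{\zeta+\theta-1}}$ is bounded as soon as $\zeta+\theta\ge1$, with $\jap{q}^{\zeta+\theta-1}W_0$ bounded, and $B_0$ is the sum of a function of order $-(\zeta+\theta)$ and a compactly supported smooth function; iterating the identity on $\cos(k r^\zeta)$ yields a further factor $|x|^{-(\zeta-1)}$ of decay each time, and in particular $\||q|W_0\|_\infty=|w|/(k\zeta)$ when $\zeta+\theta=2$ while $|q|W_0$ vanishes at infinity when $\zeta+\theta>2$. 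I would record this as a preliminary lemma. Part (1) is then immediate: $V_{lr},V_{sr}$ are bounded functions vanishing at infinity; $v\cdot\nabla\tilde V_{sr}=\Div(v\tilde V_{sr})-(\Div v)\tilde V_{sr}$ is the divergence of a vanishing vector field (use that $v$ has bounded derivatives and $\jap{q}^{1+\rho_{sr}}\tilde V_{sr}$ is bounded) plus a vanishing function; $V_c$ is compact by hypothesis; and, for $\zeta+\theta>1$, $|q|^{-1}q\cdot\nabla W_0=\Div(\tfrac{q}{|q|}W_0)-\tfrac{\nu-1}{|q|}W_0$ is again the divergence of a vanishing vector field plus vanishing functions. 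Since $a(q)$ vanishing makes $a(q)\jap{p}^{-1}$ compact, every summand of $V$, hence $V$ itself, is compact from $\ch^1$ to $\ch^{-1}$.

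For (2) I would rerun the proof of Theorem~\ref{th:exp bound 2}, the only new point being the control of the $W_{\zeta\theta}$-contribution to $(\Psi_\tau,[H,\i A_u]\Psi_\tau)$. The Helffer--Sj\"ostrand estimate of Lemma~\ref{l:H-S} does not apply to $W_{\zeta\theta}$ directly (its gradient grows when $\theta<\zeta-1$), so, exactly as in Lemma~\ref{l:deriv sr} and \cite[Proposition~7.1]{JMb}, I would use the decomposition and integrate by parts, moving the radial derivative off $W_0$; the second derivatives that appear are eliminated through the eigenvalue equation $H(F)\Psi_\tau=E\Psi_\tau$, and the surviving terms are bounded by $\|\jap{q}^{1/2}W_0\|_\infty$ times quantities that remain bounded (or tend to $0$) along the proof of Theorem~\ref{th:exp bound 2}; as $\jap{q}^{1/2}W_0$ is bounded precisely when $\zeta+\theta\ge3/2$, this yields $\limsup_\tau(\Psi_\tau,[H,\i A_u]\Psi_\tau)\le0$ and hence the dichotomy of Theorem~\ref{th:exp bound 2}. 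For (3), when $\zeta+\theta>3/2$ the same type of integration-by-parts analysis (in the spirit of \cite{JMb} and \cite[Lemma~5.4]{Ma1}) upgrades the argument to a Mourre-type estimate: combined with (1) and the positive commutator $[\Delta,\i A_u]=2p\cdot u(p)\ge c>0$ on $\{\,|\xi|^2\in J\,\}$ (for $u$ with $\xi\cdot u(\xi)>0$ and $J\Subset(0,\infty)$), one gets $\ce_u(H)\cap(0,\infty)=\emptyset$, so the bounds of (2) hold for all $\alpha>0$, $0<\beta<1$.

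For (4) and (5) I would feed (2)--(3) into Theorem~\ref{th:abs 2}: a possible eigenvector $\psi$ at $E>0$ then satisfies $\psi_F=e^{F_\beta(q)}\psi\in L^2$ for all $\alpha>0$, $0<\beta<1$, so it only remains to verify the refined inequality \eqref{eq:V,iA petit 2'}. I would treat first the core $V_0=W_{\zeta\theta}+V_c+v\cdot\nabla\tilde V_{sr}$, a sum of divergences of bounded functions. Writing $(\psi_F,[V_0,\i A_D]\psi_F)=2\Re(qV_0\psi_F,\nabla\psi_F)-\nu(\psi_F,V_0\psi_F)$ and applying Lemma~\ref{l:deriv sr}/Corollary~\ref{c:deriv sr} with $W=W_0$ (the $B_0$ and $v\cdot\nabla\tilde V_{sr}$ pieces contributing only $\epsilon(\psi_F,\Delta\psi_F)+C_\epsilon(\alpha+1)\|\psi_F\|^2$), together with the hypothesis $(\psi_F,[V_c,\i A_D]\psi_F)\ge(\epsilon_c-2)(\psi_F,\Delta\psi_F)+\lambda_c\|\psi_F\|^2$ and the remark after Corollary~\ref{c:deriv sr} that moves the term $2|((1-\kappa)|q|W_0\nabla\psi_F,\nabla\psi_F)|$ into the $\|g^{1/2}A_D\psi_F\|^2$-slot, one obtains \eqref{eq:V,iA petit 2'} for $\Delta+V_0$ with $\delta''=-C/(\alpha\beta)$ (so $\delta''>-4$ once $\alpha$ is large, by comment~(e) after Theorem~\ref{th:abs 2}), $\delta=\epsilon_c-2-2\||q|W_0\|_\infty-c\epsilon$ and $\delta'=-4\||q|W_0\|_\infty-c\epsilon$. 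Since $V_0$ is a derivative of a bounded function, comment~(i) after Theorem~\ref{th:abs 2} lets us replace the coupling condition on $\delta,\delta'$ by $\delta>-2$ and $\delta+\delta'>-2$; as $\||q|W_0\|_\infty=|w/(k\zeta)|$ when $\zeta+\theta=2$, letting $\epsilon\downarrow0$ these become $|w/(k\zeta)|<\epsilon_c/2$ and $|w/(k\zeta)|<\epsilon_c/6$, the latter being the binding one. Theorem~\ref{th:abs 2} then gives $\psi=0$ for $\Delta+V_0$ under $|w/(k\zeta)|<\epsilon_c/6$, and adding $V_{lr}+V_{sr}$ through comment~(g) yields (4). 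For (5), when $\zeta+\theta>2$ the quantity $|q|W_0$ vanishes at infinity, so by the remark following Lemma~\ref{l:deriv sr} we may choose $\kappa$ with $\|(1-\kappa(|q|))|q|W_0\|_\infty$ as small as we wish; then $\delta>-2$ and $\delta+\delta'>-2$ hold for every $w\in\R$, and the same argument applies.

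The main obstacle is the bookkeeping in (2)--(3): when $\nabla W_{\zeta\theta}$ is unbounded one must decide how many times to integrate by parts, how to split the resulting powers of $\jap{q}$ among $\Psi_\tau$, $W_0$ and $A_D\Psi_\tau$, and check that every remainder either stays bounded uniformly in $\tau$ or vanishes as $\tau\to\infty$; it is exactly this accounting that produces the thresholds $\zeta+\theta\ge3/2$ in (2) and $\zeta+\theta>3/2$ in (3), and (being unable to route (3) through $V\in C^1(A_u,\ch^1,\ch^{-1})$, which fails for $\theta<1$) it is also the step requiring the most care. By contrast, once the refined inequality \eqref{eq:V,iA petit 2'} and the ``derivative of a bounded function'' relaxation are available, the constant $\epsilon_c/6$ in (4) falls out mechanically from the two requirements $\delta>-2$ and $\delta+\delta'>-2$.
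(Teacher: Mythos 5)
Your treatment of parts (1), (4) and (5) follows essentially the paper's own route: the same radial integration by parts producing the antiderivative potential ($W_0$ is, up to the factor $-1/(k\zeta)$, the paper's $\tilde W_{\zeta\gamma}$ with $\gamma=\zeta+\theta-1$), compactness via $\nabla\tilde W=[p,i\tilde W]$, and then Lemma \ref{l:deriv sr}/Corollary \ref{c:deriv sr} together with \eqref{eq:V,iA petit 1'}--\eqref{eq:V,iA petit 2'}; your bookkeeping $\||q|W_0\|=|w/(k\zeta)|$ when $\zeta+\theta=2$, the move of the $2|((1-\kappa)|q|W_0\nabla\psi_F,\nabla\psi_F)|$ term into the $\delta''$ slot, and the resulting constraints giving $|w/(k\zeta)|<\epsilon_c/6$, as well as the vanishing of $|q|W_0$ at infinity when $\zeta+\theta>2$, all agree with the paper.

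Parts (2) and (3) are where your proposal has a genuine gap. For $3/2\le\zeta+\theta\le2$ you cannot ``rerun the proof of Theorem \ref{th:exp bound 2}'': that proof rests on the hypothesis that $\jap{p}^{-1}[V,iA_u]\jap{p}^{-1}$ is bounded, i.e. $V\in C^1(A_u,\ch^1,\ch^{-1})$, which enters both in Lemma \ref{l:Commutator psiF} and, crucially, in the Virial identity $(\psi_F,[H(F),iA_u]\psi_F)=0$. But $[W_{\zeta\theta},iA_u]$ contains the term $qW_{\zeta\theta}\cdot u(p)$, of size $\jap{q}^{1-\theta}$, which grows polynomially whenever $\theta<1$ (e.g. $\zeta=3$, $\theta=-1$), and the needed regularity is available only for $\zeta+\theta>2$ (this is exactly \cite[Lemma 5.4]{Ma1} and the paper's comment (5) after the proposition). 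For this reason the paper does not go through Theorem \ref{th:exp bound 2} in this regime: it adapts \cite[Proposition 7.1]{JMb}, where the input is that $(\Psi_\lambda,[V,iA_u]\Psi_\lambda)$ stays \emph{uniformly bounded} (not that its $\limsup$ is $\le0$), and this uniform bound is obtained by an iterated resolvent expansion of $(H(F)+m)^{-l}$ in the long-range part $V_{lr}$ combined with $\jap{p}^{-2l}$-smoothing of the oscillating term (using $\zeta>1$ repeatedly to trade powers of $\jap{p}^{-1}$ for decay $\jap{q}^{-(\zeta-1)}$). The threshold $3/2$ then comes from boundedness of $\jap{p}^{-1}qW_{\zeta\theta}\jap{p}^{-1}\jap{q}^{1-\zeta-\theta}$ (and its compactness for the strict inequality in (3), which yields compactness of $\chi(H)[W_{\zeta\theta},iA_u]\chi(H)$ and hence the Mourre estimate), not from boundedness of $\jap{q}^{1/2}W_0$; your heuristic happens to give the same number but is not the operative mechanism, and your sketch supplies neither a meaning for the commutator form nor a substitute for the Virial step. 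Since you yourself flag this accounting as the unresolved part, as written the proof of (2)--(3) is missing its core argument, and the route you chose for it would fail precisely in the range $3/2\le\zeta+\theta\le2$ that these items are meant to cover.
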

 
 We will give some comments about this Proposition
 \begin{enumerate}
\item In the case $\zeta+\theta=2$, $\theta\leq 1/2$ and $\zeta>1$, Theorems \ref{th:exp bound 2} and \ref{th:abs 2} apply if $\left|\frac{w}{k\zeta}\right|<\frac{\epsilon_c}{8}$. But, by using \eqref{eq:V,iA petit 2'}, we can show that the result of these Theorems stay true if $\left|\frac{w}{k\zeta}\right|<\frac{\epsilon_c}{6}$.

 \smallskip
 
 \item If $\theta>0$, we can replace the assumption $V_c:\ch^1\rightarrow\ch^{-1}$ is compact by $V_c$ $\Delta$-compact and $\jap{q}^{\rho_{sr}}v\cdot\nabla \tilde{V}_{sr}$ bounded with the same result.
 
 \smallskip
 
\item If $\theta\leq 0$, $W_{\zeta \theta}$ is not $\Delta$-compact. Therefore Corollary \ref{c:FH} does not apply in this case.

\smallskip

 \item Making use the specific form of the potential, the absence of positive eigenvalue for $H$ was proved in \cite{JMb} if $\zeta>1$ and $\theta>1/2$.
 
 \smallskip
 
\item If $2\geq\zeta+\theta\geq3/2$, the regularity required by Theorem \ref{th:exp bound 2} is not granted. However we can prove the sub-exponential bounds along the lines of the proof of  \cite[Proposition 3.2]{JMb}.

\smallskip

\item Remark that, in \cite{Wh}, Schr\"odinger operators with oscillating potentials are studied, and it was used that potentials are central. But in our case, we do not suppose that other parts of the potential are central.
\end{enumerate}

\begin{proof}[Proposition \ref{prop:oscillant}]
Let $u\in\cu$ a bounded vector field. Suppose that $\zeta+\theta>1$.
Let $\tilde{\kappa}\in C^\infty_c(\R,\R)$ such that $\tilde{\kappa}(|x|)=0$ if $|x|\geq1$, $\tilde{\kappa}=1$ on $[-1/2,1/2]$ and $0\leq \tilde{\kappa}\leq 1$. So, we can observe that $(1-\tilde{\kappa}(|x|))(1-\kappa(|x|))=(1-\kappa(|x|))$ for all $x\in\R^\nu$.
For $\gamma\in\R$, let 

\begin{equation}\label{eq:tilde W}
\tilde{W}_{\zeta \gamma}(x)=w(1-\tilde{\kappa}(|x|))\frac{\cos(k|x|^\zeta)}{| x|^\gamma}.
\end{equation}
For $x\in\R^\nu$,
\begin{equation}\label{eq:W oscillant}
W_{\zeta \theta}(x)=-(1-\kappa(|x|))\frac{1}{k\zeta}\frac{x}{|x|}\cdot\nabla\tilde{W}_{\zeta \gamma}(x)-(1-\kappa(|x|))\gamma\frac{1}{k\zeta|x|}\tilde{W}_{\zeta \gamma}(x)
\end{equation}
with $\gamma=\theta+\zeta-1>0$.
Thus, by writing $\nabla\tilde{W}_{\zeta \gamma}=[p,i\tilde{W}_{\zeta \gamma}]$, we can show that $W_{\zeta \theta}:\ch^1\rightarrow\ch^{-1}$ is compact. Remark that since $\tilde{W}_{\zeta \gamma}$ has the same form as $W_{\zeta \theta}$, by iterated this calculus, we can show that, if $\zeta>1$, for all $l\in\N$, for all $k\in\R$, $l\geq (k-\theta)(\zeta-1)^{-1}$, $\jap{p}^{-l}\jap{q}^kW_{\zeta \theta}\jap{p}^{-l}$ is bounded. Similarly, since the derivative of $v$ is bounded, by writing
\[v\cdot\nabla\tilde{V}_{sr}=div(v)\tilde{V}_{sr}-div(v\tilde{V}_{sr}),\]
 we can show that $v\cdot\nabla \tilde{V}_{sr}:\ch^1\rightarrow\ch^{-1}$ is compact. Therefore, by sum, the first point of Proposition \ref{prop:oscillant} is proved.

To prove the next point, in a first time, we can see that, by \cite[Lemma 5.4]{Ma1}, if $\zeta+\theta>2$, $W_{\zeta \theta}$ has enough regularity to satisfies assumptions of Theorem \ref{th:exp bound 2}. Similarly, since $qV_c:\ch^1\rightarrow\ch^{-1}$ is compact ($V_c$ is compactly suport), we can show that all the terms of the potential has enough regularity to satisfies assumptions of Theorem \ref{th:exp bound 2}. 

If $3/2\leq\zeta+\theta\leq2$, we will adapt the proof of \cite[Proposition 7.1]{JMb} to our context. In this proof, we can see that it is sufficient to prove that $(\Psi_\lambda,[V,iA_u]\Psi_\lambda)$ is uniformly bounded in $\lambda$ to prove the polynomial bounds.

Suppose that $2\geq\zeta+\theta\geq3/2$. Then $V:\ch^1\rightarrow\ch^{-1}$ is compact which implies that $\sigma_{ess}(H)=\sigma_{ess}(\Delta)=[0,+\infty)$. In particular, we can find $m>0$, as large as we want, such that $-m\notin\sigma(H)$. In particular, by the resolvent formula, $-m\notin\sigma(H(F))$.

Let $F$ as in \eqref{eq:premier F}.  Let $H_0(F)=e^{F(Q)}H_0 e^{-F(Q)}$. Remark that $F(x)$ and $\nabla F(x)$ is bounded uniformly with respect to $\lambda>1$. As in \cite{JMb}, $(\Psi_\lambda,[V-W_{\zeta \theta}-V_c,iA_u]\Psi_\lambda)$ is uniformly bounded in $\lambda$. Therefore, we have to show that $(\Psi_\lambda,[V_c+W_{\zeta \theta},iA_u]\Psi_\lambda)$ is uniformly bounded in $\lambda$. By pseudodifferential calculus, we can show that, for all $l\in\R$, $\jap{P}^{l+2}(m+H_0(F))^{-1}\jap{P}^{-l}$ is uniformly bounded in $\lambda$. Notice that $\jap{P}(m+H(F))^{-1}\jap{P}$  is uniformly bounded in $\lambda$. Moreover, for $\epsilon\in[0,1]$,\\
 $\jap{q}^\epsilon\jap{P}(m+H_0(F))^{-1}\jap{P}\jap{q}^{-\epsilon}$ is uniformly bounded in $\lambda$.

We can write 
 \begin{eqnarray*}
& & (\Psi_\lambda,[V_c,iA_u]\Psi_\lambda)\\
&=&\left((H(F)+m)\Psi_\lambda,(H(F)+m)^{-1}[V_c,iA_u](H(F)+m)^{-1}(H(F)+m)\Psi_\lambda\right).
 \end{eqnarray*}
 Since $\jap{p}(H(F)+m)^{-1}\jap{p}$ is uniformly bounded and since $V_c$ is compactly support, we can easily see that $(H(F)+m)^{-1}[V_c,iA_u](H(F)+m)^{-1}$ is uniformly bounded. Using that $(H(F)+m)\Psi_\lambda=(E+m)\Psi_\lambda$, this implies that $(\Psi_\lambda,[V_c,iA_u]\Psi_\lambda)$ is uniformly bounded in $\lambda$.
 
For $(\Psi_\lambda,[W_{\zeta \theta},iA_u]\Psi_\lambda)$, notice that in the expression of $[W_{\zeta \theta},iA_u]$ there is only terms of the form $qW_{\zeta \theta}\cdot u(p)$ and $W_{\zeta \theta}div(u)(p)$. For terms with $W_{\zeta \theta}div(u)(p)$, since $W_{\zeta \theta}:\ch^1\rightarrow\ch^{-1}$ is compact, we know that $(H(F)+m)^{-1}W_{\zeta \theta}div(u)(p)(H(F)+m)^{-1}$ is uniformly bounded.

For the other type of terms, we can write for $l>0$
\begin{eqnarray*}
& &(\Psi_\lambda,qW_{\zeta \theta}\cdot u(p)\Psi_\lambda)\\
&=&\biggl((H(F)+m)^l\Psi_\lambda,(H(F)+m)^{-l}qW_{\zeta \theta}\cdot u(p)(H(F)+m)^{-l}(H(F)+m)^l\Psi_\lambda\biggr)\\
&=&(E+m)^{2l} \left(\Psi_\lambda,(H(F)+m)^{-l}qW_{\zeta \theta}\cdot u(p)(H(F)+m)^{-l}\Psi_\lambda\right).
\end{eqnarray*}

In particular, we only have to show that for $l$ large enough, 
\[(H(F)+m)^{-l}qW_{\zeta \theta} u(p)(H(F)+m)^{-l}\] is uniformly bounded in $\lambda$. To do this, we will use the resolvent estimate and write for all $M\in\N^*$
\begin{eqnarray*}
& &(H(F)+m)^{-1}\\
&=&(H_0(F)+m)^{-1}+\sum_{k=1}^{M}(-1)^k\left((H_0(F)+m)^{-1}V\right)^k(H_0(F)+m)^{-1}\\
& &+(-1)^{M+1}\left((H_0(F)+m)^{-1}V\right)^{M+1}(H(F)+m)^{-1}\\
&=&(H_0(F)+m)^{-1}+\sum_{k=1}^{M}(-1)^k(H_0(F)+m)^{-1}\left(V(H_0(F)+m)^{-1}\right)^k\\
& &+(-1)^{M+1}(H(F)+m)^{-1}\left(V(H_0(F)+m)^{-1}\right)^{M+1}.
\end{eqnarray*}
Remark that, since 
\[\jap{q}^{\zeta+\theta-1}\jap{p}^{-1}\left(V_{sr}+v\cdot\nabla \tilde{V}_{sr}+V_c+W_{\zeta \theta}\right)\jap{p}^{-1}\]
 is bounded and since, for all $\epsilon\in[0,1]$, $\jap{q}^\epsilon\jap{P}(m+H_0(F))^{-1}\jap{P}\jap{q}^{-\epsilon}$ is uniformly bounded, we can write
\begin{eqnarray*}
& &(H(F)+m)^{-1}\\
&=&(H_0(F)+m)^{-1}+\sum_{k=1}^{M}(-1)^k\left((H_0(F)+m)^{-1}V_{lr}\right)^k(H_0(F)+m)^{-1}\\
& &+(-1)^{M+1}\left((H_0(F)+m)^{-1}V_{lr}\right)^{M+1}(H(F)+m)^{-1}+\jap{p}^{-1}\jap{q}^{1-\zeta-\theta}B_1\\
&=&(H_0(F)+m)^{-1}+\sum_{k=1}^{M}(-1)^k(H_0(F)+m)^{-1}\left(V_{lr}(H_0(F)+m)^{-1}\right)^k\\
& &+(-1)^{M+1}(H(F)+m)^{-1}\left(V_{lr}(H_0(F)+m)^{-1}\right)^{M+1}+B_2\jap{q}^{1-\zeta-\theta}\jap{p}^{-1}
\end{eqnarray*}
where $B_1,B_2$ are uniformly bounded in $\lambda$. Now, we will choose $M\in\N^*$ such that $(M+1)\rho_{lr}\geq \zeta+\theta-1$. By a simple computation, we can see that 
\begin{eqnarray*}
& &(-1)^{M+1}(H(F)+m)^{-1}\left(V_{lr}(H_0(F)+m)^{-1}\right)^{M+1}\jap{q}^{1-\zeta-\theta}\jap{p}^{-1}\text{ and }\\
& &\jap{p}^{-1}\jap{q}^{1-\zeta-\theta}(-1)^{M+1}\left((H_0(F)+m)^{-1}V_{lr}\right)^{M+1}(H(F)+m)^{-1}
\end{eqnarray*}
are uniformly bounded.

By taking the power $l>0$, we have
\begin{eqnarray*}
& &(H(F)+m)^{-l}\\
&=&\biggl((H_0(F)+m)^{-1}+\sum_{k=1}^{M}(-1)^k\left((H_0(F)+m)^{-1}V_{lr}\right)^k(H_0(F)+m)^{-1}\biggr)^l\\
& &+\jap{p}^{-1}\jap{q}^{1-\zeta-\theta}B'_1\\
&=&\biggl((H_0(F)+m)^{-1}+\sum_{k=1}^{M}(-1)^k(H_0(F)+m)^{-1}\left(V_{lr}(H_0(F)+m)^{-1}\right)^k\biggr)^l\\
& &+B'_2\jap{q}^{1-\zeta-\theta}\jap{p}^{-1}\\
\end{eqnarray*}
with $B'_1,B'_2$ are uniformly bounded in $\lambda$.

Notice that $V_{lr}\jap{p}^{-2}\jap{p}^2(H_0(F)+m)^{-1}$ is bounded. By a simple computation, we can remark that $\jap{q}[V_{lr},\jap{p}^{-2}]$ is bounded. In particular, we can write 
\[V_{lr}(H_0(F)+m)^{-1}=\jap{p}^{-2}V_{lr}\jap{p}^2(H_0(F)+m)^{-1}+\jap{q}^{-1}B_3\]
 with $B_3$ uniformly bounded. Similarly, we can write
 \[(H_0(F)+m)^{-1}V_{lr}=(H_0(F)+m)^{-1}\jap{p}^2V_{lr}\jap{p}^{-2}+B_4\jap{q}^{-1}\]
 with $B_4$ uniformly bounded. Repeating this computation, we can see that
 \begin{eqnarray*}
& &(H(F)+m)^{-l}\\
&=&\jap{p}^{-2l}B'_5+\jap{p}^{-1}\jap{q}^{-1}B'_3+\jap{p}^{-1}\jap{q}^{1-\zeta-\theta}B'_1\\
&=&B'_6\jap{p}^{-2l}+B'_4\jap{q}^{-1}\jap{p}^{-1}+B'_2\jap{q}^{1-\zeta-\theta}\jap{p}^{-1}
\end{eqnarray*}
with $(B'_k)_{k=1,\cdots,6}$ uniformly bounded in $\lambda$.
Thus,
\begin{eqnarray*}
& &(H(F)+m)^{-l} qW_{\zeta \theta} u(p)(H(F)+m)^{-l}\\
&=&\left(B'_6\jap{p}^{-2l}+B'_4\jap{q}^{-1}\jap{p}^{-1}+B'_2\jap{q}^{1-\zeta-\theta}\jap{p}^{-1}\right)qW_{\zeta \theta} u(p)\\
& &\left(\jap{p}^{-2l}B'_5+\jap{p}^{-1}\jap{q}^{-1}B'_3+\jap{p}^{-1}\jap{q}^{1-\zeta-\theta}B'_1\right)
\end{eqnarray*}
Since $\jap{p}^{-1} qW_{\zeta \theta} \jap{p}^{-1}\jap{q}^{1-\zeta-\theta}$ is bounded, we can write 
\[
(H(F)+m)^{-l} qW_{\zeta \theta} u(p)(H(F)+m)^{-l}=B'_6\jap{p}^{-2l}qW_{\zeta \theta} u(p)\jap{p}^{-2l}B'_5+B
\]
where $B$ is uniformly bounded in $\lambda$. By taking $l$ large enough such that $\jap{p}^{-2l}qW_{\zeta \theta} \jap{p}^{-2l}$ is bounded, we show that $(H(F)+m)^{-l} qW_{\zeta \theta} u(p)(H(F)+m)^{-l}$ is uniformly bounded in $\lambda$. This implies that $(\Psi_\lambda,[V,iA_u]\Psi_\lambda)$ is uniformly bounded in $\lambda$ and we infer the polynomial bounds. Using a similar proof with $F$ as in \eqref{eq:deuxieme F}, we prove the sub-exponential bounds (point (2) of Proposition \ref{prop:oscillant}).

To prove that this sub-exponential bounds are unlimited, we only have to show that the Mourre estimate is true on all compact subset of $(0,+\infty)$. Let $\chi\in C^\infty_c$ supported on a compact subset of $(0,+\infty)$.
Suppose that $\zeta+\theta>3/2$. Then there is $a>0$ such that:

\begin{eqnarray*}
\chi(H)[H,iA_u]\chi(H)&=&\chi(H_0)[\Delta,iA_u]\chi(H_0)+(\chi(H)-\chi(H_0))[\Delta,iA_u]\chi(H_0)\\
& &+\chi(H_0)[\Delta,iA_u](\chi(H)-\chi(H_0))\\
& &+(\chi(H)-\chi(H_0))[\Delta,iA_u](\chi(H)-\chi(H_0))\\
& &+\chi(H)[W_{\zeta \theta},iA_u]\chi(H)\\
&\geq&a\chi(H_0)^2+(\chi(H)-\chi(H_0))[\Delta,iA_u]\chi(H_0)\\
& &+\chi(H_0)[\Delta,iA_u](\chi(H)-\chi(H_0))\\
& &+(\chi(H)-\chi(H_0))[\Delta,iA_u](\chi(H)-\chi(H_0))\\
& &+\chi(H)[W_{\zeta \theta},iA_u]\chi(H).
\end{eqnarray*}
Remark that since $H$ is a compact perturbation of $H_0=\Delta$, $(\chi(H)-\chi(H_0))$ is compact on $\ch^1$ to $\ch^{-1}$. In particular the second, the third and the fourth terms of the r.h.s. of the previous inequality are compact. Moreover, since $\chi(H)(H+m)^l$ is bounded for all $l>0$, using that $\jap{P}^{-1}QW_{\zeta \theta}(Q)\jap{P}^{-1}\jap{Q}^{1-\theta-\zeta}$ is compact if $\zeta+\theta>3/2$ and using a similar proof than in the previous point, we can show that $\chi(H)[W_{\zeta \theta},iA_u]\chi(H)$ is compact. So there is $a>0$ and $K$ compact such that
\begin{equation}\label{eq: Mourre est compact}
\chi(H)[H,iA_u]\chi(H)\geq a\chi(H)^2+K.
\end{equation}
Let $\lambda_0\in (0,+\infty)$ and $I$ an open real set containing $\lambda_0$ such that the closure of $I$ is included in $(0,+\infty)$. Take $\chi$ as previously such that $\chi=1$ on $I$. Remark that $\chi(H)E(I)=E(I)\chi(H)=E(I)$. Thus, by multiplying on the left and on the right of \eqref{eq: Mourre est compact} by $E(I)$, we obtain the Mourre estimate at $\lambda_0$ w.r.t. the conjugate operator $A_u$(point (3) of Proposition \ref{prop:oscillant}).

Now, suppose that $\zeta+\theta\geq2$. By Corollary \ref{c:deriv sr} and \eqref{eq:W oscillant}, we already know that if $\||q|\tilde{W}_{\zeta \gamma}\|$ is small enough, then $W_{\zeta \theta}$ satisfies \eqref{eq:V,iA petit 1} and \eqref{eq:V,iA petit 2} and Theorems \ref{th:abs 1} and \ref{th:abs 2} apply. Thus we only have to show that this norm is small enough.

Suppose that $\zeta+\theta=2$ and $\zeta>1$. 
Since $\jap{q}^{1+\rho_{sr}}\tilde{V}_{sr}$ is bounded, we can use Corollary \ref{c:deriv sr} on $v\cdot\nabla\tilde{V}_{sr}$.
Remark that 
\[\left\|\frac{|q|}{k\zeta }(1-\kappa(|q|))\tilde{W}_{\zeta \gamma}(q)\right\|=\left|\frac{w}{k\zeta}\right|.\]
In particular, if $\left|\frac{w}{k\zeta}\right|<\frac{\epsilon_c}{8}$, for all $C>0$, we can find $\epsilon>0$ small enough such that 
\[-\left(C\epsilon+4\left|\frac{w}{k\zeta}\right|\right)-4\left|\frac{w}{k\zeta}\right|+\epsilon_c-2>-2.\]
Therefore, by Corollary \ref{c:deriv sr}, Theorem \ref{th:abs 2} applies and we prove this part of the Proposition. Using the assumption \eqref{eq:V,iA petit 2'} instead of \eqref{eq:V,iA petit 2} in Theorem \ref{th:abs 2}, we can remark that it suffices to have $\left|\frac{w}{k\zeta}\right|<\frac{\epsilon_c}{6}$.

Suppose that $\zeta+\theta>2$. In this case, $\gamma=\zeta+\theta-1>1$. In particular, $|q|\tilde{W}_{\zeta \gamma}$ vanishes at infinity. So, for all $\epsilon>0$, we can find $\tilde{\chi}\in C^\infty_c$, such that $\tilde{\chi}(t)=1$ if $|t|<1$, $0\leq\tilde{\chi}\leq1$ and $\|(1-\tilde{\chi}(q))|q|W\|<\epsilon$.
Thus, by Corollary \ref{c:deriv sr}, for $\epsilon$ small enough, \eqref{eq:V,iA petit 2} is satisfied and Theorem \ref{th:abs 2} applies (point (5) of Proposition \ref{prop:oscillant}).
\qed
\end{proof}

\subsection{A potential with high oscillations}\label{ss:oscillant exp}

Let 
\[V(x)=w(1-\kappa(|x|))\exp(3|x|/4)\sin(\exp(|x|))\]
with $w\in\R$, $\kappa\in C^\infty_c(\R,\R)$, $0\leq\kappa\leq1$ and $\kappa(|x|)=1$ if $|x|<1$.

For all $w\in\R$, we have the following:
\begin{lemma}\label{l:ex expo}
Let $V$ as previously. Then
\begin{enumerate}
\item $V:\ch^1\rightarrow\ch^{-1}$ is compact;

\smallskip

\item for all $u\in C^\infty$ bounded with all derivatives bounded, $V\in C^\infty(A_u,\ch^1,\ch^{-1})$;

\smallskip

\item $H=\Delta+V$ has no positive eigenvalues.
\end{enumerate}
\end{lemma}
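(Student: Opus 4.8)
The plan is to verify the three points by reducing everything to the abstract machinery already set up in the paper, together with the detailed estimates on this specific potential contained in \cite{Ma1}. For the first two points, the strategy is purely a matter of citing the right result. The potential $V(x)=w(1-\kappa(|x|))\exp(3|x|/4)\sin(\exp(|x|))$ is exactly of the form considered in \cite[Lemma 5.6]{Ma1}, where it is shown that $V:\ch^1\rightarrow\ch^{-1}$ is compact and that $V$ is of class $C^{1,1}(A_u,\ch^1,\ch^{-1})$ for all bounded $u$; in fact the same computation gives $C^\infty(A_u,\ch^1,\ch^{-1})$ since each commutator with $A_u$ produces again a potential of the same type (the factor $\exp(3|x|/4)$ is beaten by the oscillation $\sin(\exp(|x|))$ after integrating by parts, so one gains decay at every step). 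So for points (1) and (2) I would simply invoke \cite[Lemma 5.6]{Ma1} and remark on the iteration that upgrades $C^{1,1}$ to $C^\infty$.

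For point (3), the plan is to combine Theorem \ref{th:exp bound 2} and Theorem \ref{th:abs 2}. First, since $V:\ch^1\rightarrow\ch^{-1}$ is compact, $H=\Delta+V$ has form domain $\ch^1$, is self-adjoint, and $\sigma_{\mathrm{ess}}(H)=[0,+\infty)$. Since moreover $V\in C^1_\mathfrak{u}(A_u,\ch^1,\ch^{-1})$ for $u\in\cu$ with $x\cdot u(x)>0$, the operator $(\Delta+1)^{-1/2}[V,iA_u](\Delta+1)^{-1/2}$ is compact, so the Mourre estimate holds on every compact subset of $(0,+\infty)$; hence $\ce_u(H)\cap(0,+\infty)=\emptyset$. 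By Theorem \ref{th:exp bound 2}, any $\psi$ with $H\psi=E\psi$, $E>0$, satisfies $\exp(\alpha\jap{x}^\beta)\psi\in L^2(\R^\nu)$ for all $\alpha>0$ and all $0<\beta<1$. It then remains to check the hypothesis \eqref{eq:V,iA petit 2} of Theorem \ref{th:abs 2} — i.e. to produce $\delta>-2$, $\delta',\sigma,\sigma'$ with $\delta+(1+\|\jap{p}^{-1}V\jap{p}^{-1}\|)\delta'>-2$ such that $(\psi_F,[V,iA_D]\psi_F)\geq\delta(\psi_F,\Delta\psi_F)+\delta'(\psi_F,(\nabla F_\beta)^2\psi_F)+(\sigma\alpha+\sigma')\|\psi_F\|^2$. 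The natural route is to write $V=\frac{1}{k\zeta}$-type primitive as in the oscillating case: integrating by parts against the huge oscillation, $V = c\,(1-\kappa(|q|))\tfrac{q}{|q|}\cdot\nabla\widetilde W + (\text{lower order})$ where $\widetilde W(x) = w(1-\kappa(|x|))e^{3|x|/4}e^{-|x|}\cos(e^{|x|}) = w(1-\kappa(|x|))e^{-|x|/4}\cos(e^{|x|})$, so that $|q|\widetilde W$ not only is bounded but \emph{vanishes at infinity} (indeed decays exponentially). Then Corollary \ref{c:deriv sr} (with $B$, $V_L$ absent, or with the lower-order remainder absorbed into $B$) applies: one can choose the cutoff so that $\|(1-\kappa(|q|))|q|\widetilde W\|$ is as small as we like and $\epsilon$ small, giving \eqref{eq:V,iA petit 2} with $\delta'=0$ and $\delta>-2$, so in particular $\delta+\delta'>-2$. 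Theorem \ref{th:abs 2} then forces $\psi=0$.

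The main obstacle is the second route: making the integration-by-parts identity for this potential precise and controlling the remainder terms. Unlike the polynomial case $W_{\zeta\theta}$, here the amplitude $e^{3|x|/4}$ grows, and one integration by parts against $\sin(e^{|x|})$ gains a factor $e^{-|x|}$ (from the derivative of the phase $e^{|x|}$), which overcomes the growth with room to spare; but one must check that the ``lower order'' terms generated — those involving $\nabla\kappa$, the derivative of $|x|$, and the curvature $\Delta(e^{|x|}) / (e^{|x|})^2$ factors — are genuinely bounded (indeed exponentially decaying) so that they fall under the ``$B$'' (bounded, $qB$ bounded) or short-range hypotheses of Corollary \ref{c:deriv sr}. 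This is precisely the computation carried out in \cite[Lemma 5.6]{Ma1}, so I would lean on it rather than redo it, stating the needed identity as a consequence of that lemma and then quoting Corollary \ref{c:deriv sr} to conclude. One subtlety worth a sentence: because $\|\jap{p}^{-1}V\jap{p}^{-1}\|$ may be large, it is important that the estimate from Corollary \ref{c:deriv sr} comes with $\delta'=0$ (no $(\nabla F)^2$ loss beyond what the kinetic term absorbs), so that the sign condition $\delta+(1+\|\jap{p}^{-1}V\jap{p}^{-1}\|)\delta'>-2$ reduces to $\delta>-2$, which is exactly what the small-norm choice of cutoff delivers.
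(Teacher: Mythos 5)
Your proposal follows essentially the same route as the paper: compactness and $C^\infty(A_u,\ch^1,\ch^{-1})$ regularity via the integration by parts against $\sin(e^{|x|})$ and \cite[Lemma 5.6]{Ma1}, sub-exponential bounds from Theorem \ref{th:exp bound 2} (the Mourre estimate holding on $(0,+\infty)$ by compactness and $C^1_\mathfrak{u}$ regularity), and then Corollary \ref{c:deriv sr} applied to $W=w(1-\kappa(|x|))e^{-|x|/4}\cos(e^{|x|})$, whose product with $|q|$ vanishes at infinity so the cutoff makes its norm small, to verify \eqref{eq:V,iA petit 2} and conclude with Theorem \ref{th:abs 2}. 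The only slight imprecision is your claim that the estimate comes with $\delta'=0$: Corollary \ref{c:deriv sr} actually yields $\delta'=-4\||q|W\|$, nonzero but arbitrarily small, and since $\|\jap{p}^{-1}V\jap{p}^{-1}\|$ is a fixed finite constant the condition $\delta+(1+\|\jap{p}^{-1}V\jap{p}^{-1}\|)\delta'>-2$ is still satisfied, so the argument goes through as in the paper.
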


Remark that since $V$ is not $\Delta$-compact and since $V$ is not in $C^1(A_D,\ch^1,\ch^{-1})$ (see \cite[Lemma 5.6]{Ma1}), we can not apply Corollary \ref{c:FH} and we can not use the Mourre Theorem with $A_D$ as conjugate operator. But, by Corollary \ref{c:resolvent}, we can prove that 
\[\lambda\mapsto R(\lambda\pm i0)\]
are of class $C^\infty$ on $(0,+\infty)$.

\begin{proof}[ Lemma \ref{l:ex expo}]
%
Let $\tilde{\kappa}\in C^\infty_c(\R,\R)$ such that $\tilde{\kappa}(|x|)=0$ if $|x|\geq1$, $\tilde{\kappa}=1$ on $[-1/2,1/2]$ and $0\leq \tilde{\kappa}\leq 1$.
Let $\tilde{V}(x)=(1-\tilde{\kappa}(|x|))\cos(e^{|x|})$. Then, we have
\[(1-\kappa(|x|))\nabla \tilde{V}(x)=-(1-\kappa(|x|))\frac{x}{|x|}\exp(|x|)\sin(\exp(|x|)).\]
So 
\begin{eqnarray*}
xV(x)&=&-w|x|(1-\kappa(|x|))\exp(-|x|/4)\nabla\tilde{V}(x)\\
&=&-w|x|\nabla\left((1-\kappa(|x|))\exp(-|x|/4)\tilde{V}(x)\right)\\
& &-wx\kappa'(|x|)\exp(-|x|/4)\tilde{V}(x)-\frac{w}{4}x(1-\kappa(|x|))\exp(-|x|/4)\tilde{V}(x) .
\end{eqnarray*}
As previously, by writing $\nabla\tilde{V}=[p,iV]$, we can show that $V:\ch^1\rightarrow\ch^{-1}$ is compact. Moreover, by \cite[Lemma 5.6]{Ma1}, we already know that $V\in C^{\infty}(A_u,\ch^1,\ch^{-1})$ for all $u\in\cu$ bounded. Those implies that Theorem \ref{th:exp bound 2} applies. Moreover, since $|q|\tilde{V}$ vanishes at infinity, as previously, for all $\epsilon>0$, we can find $\tilde{\chi}\in C^\infty_c$, such that $\tilde{\chi}(t)=1$ if $|t|<1$, $0\leq\tilde{\chi}\leq1$ and $\|(1-\tilde{\chi}(q))|q|W\|<\epsilon$. Thus, by Corollary \ref{c:deriv sr}, we can find $\epsilon>0$ small enough such that \eqref{eq:V,iA petit 2} is true. Therefore Theorem \ref{th:abs 2} applies and $H=\Delta+V$ has no positive eigenvalues.
 \qed
\end{proof}

\appendix 

\section{The Helffer-Sj\"ostrand formula}\label{s: H-S formula}

Let $T$ and $B$ two self-adjoint operators. Let $ad^1_B(T)=[T,B]$ be the commutator. We denote $ad^p_B(T)=[ad^{p-1}_B(T),B]$ the iterated commutator. Furthermore, if $T$ is bounded, $T$ is of class $C^k(B)$ if and only if for all $0\leq p\leq k$, $ad^p_B(T)$ is bounded.

\begin{proposition}[\cite{DG} and \cite{Mo}]
Let $\varphi\in \cs^\rho$, $\rho\in\R$. For all $l\in\R$, there is a smooth function $\varphi^\C:\C\rightarrow\C$, called an almost-analytic extension of $\varphi$, such that :
\begin{equation}
\varphi^\C_{|\R}=\varphi \qquad \frac{\partial \varphi^\C}{\partial \bar{z}}=c_1 \langle\Re(z)\rangle^{\rho-1-l}|\Im(z)|^l
\end{equation}
\begin{equation}
supp \varphi^\C\subset \{x+iy\|y|\leq c_2\langle x\rangle\}
\end{equation}
\begin{equation}
\varphi^\C(x+iy)=0, \text{ if }x\notin supp(\varphi)
\end{equation}
for constant $c_1$ and $c_2$ depending of the semi-norms of $\varphi$.
\end{proposition}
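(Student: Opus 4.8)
The statement is the classical existence result for almost-analytic extensions, and the plan is simply to write one down by the standard cut-off Taylor polynomial recipe and check the stated properties; the displayed line for $\partial\varphi^\C/\partial\bar z$ is of course to be read as the estimate $\bigl|\partial\varphi^\C/\partial\bar z\bigr|\le c_1\jap{\Re z}^{\rho-1-l}|\Im z|^l$.

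Fix $l\in\R$, pick an integer $N\ge\max(0,l)$, and choose a cut-off $\chi\in C^\infty_c(\R)$ with $0\le\chi\le1$, $\chi\equiv1$ on $[-1,1]$ and $\operatorname{supp}\chi\subset[-2,2]$. I would then set
\[
\varphi^\C(x+\i y)=\chi\Bigl(\tfrac{y}{\jap{x}}\Bigr)\sum_{k=0}^{N}\frac{(\i y)^k}{k!}\,\varphi^{(k)}(x),\qquad x,y\in\R .
\]
This is smooth on $\C$, and the three ``easy'' requirements are immediate: at $y=0$ only the $k=0$ term survives and $\chi(0)=1$, so $\varphi^\C_{|\R}=\varphi$; the factor $\chi(y/\jap{x})$ vanishes unless $|y|\le 2\jap{x}$, which is the second displayed line with $c_2=2$; and if $x\notin\operatorname{supp}\varphi$ then $\varphi$ and all its derivatives vanish at $x$, hence $\varphi^\C(x+\i y)=0$ there.

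The one genuine computation is the $\bar\partial$-bound. Applying $\partial_{\bar z}=\tfrac12(\partial_x+\i\partial_y)$ first to the Taylor polynomial alone, the $\partial_x$ term sends each $\varphi^{(k)}$ to $\varphi^{(k+1)}$ while the $\i\partial_y$ term reproduces the same sum shifted by one index with the opposite sign, so the whole expression telescopes and only $\frac{(\i y)^N}{2\,N!}\varphi^{(N+1)}(x)$ remains. Adding the term in which $\partial_x+\i\partial_y$ falls on $\chi(y/\jap{x})$, I get
\[
\frac{\partial\varphi^\C}{\partial\bar z}=\frac{(\i y)^N}{2\,N!}\varphi^{(N+1)}(x)\,\chi\Bigl(\tfrac{y}{\jap{x}}\Bigr)+\tfrac12\bigl(\partial_x+\i\partial_y\bigr)\!\Bigl[\chi\Bigl(\tfrac{y}{\jap{x}}\Bigr)\Bigr]\sum_{k=0}^{N}\frac{(\i y)^k}{k!}\varphi^{(k)}(x).
\]
For the first term I would use $\varphi\in\cs^\rho$, so that $|\varphi^{(N+1)}(x)|\le C_{N+1}(\varphi)\jap{x}^{\rho-N-1}$, and on the support $|y|\le2\jap{x}$ together with $N\ge l$ turns $|y|^N$ into $\le 2^{\,N-l}|y|^l\jap{x}^{\,N-l}$, giving a bound $\lesssim |y|^l\jap{x}^{\rho-1-l}$. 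For the second term, the first derivatives of $\chi(y/\jap{x})$ are $O(\jap{x}^{-1})$ and are supported in the region where $|y|$ is comparable to $\jap{x}$; there $\sum_{k\le N}\tfrac{|y|^k}{k!}|\varphi^{(k)}(x)|\lesssim\sum_{k\le N}|y|^k\jap{x}^{\rho-k}\lesssim\jap{x}^{\rho}$, so this term is $\lesssim\jap{x}^{\rho-1}$, which on its support is again $\lesssim|y|^l\jap{x}^{\rho-1-l}$. All implicit constants depend only on $N$, on $\chi$, and on the semi-norms $C_0(\varphi),\dots,C_{N+1}(\varphi)$, so taking $c_1$ to be the largest of them yields the claim.

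I expect the only delicate point to be exactly this last estimate, and within it the step where the loss of one power of $\jap{x}$ coming from differentiating the cut-off is re-expressed in the required form $\jap{\Re z}^{\rho-1-l}|\Im z|^l$: this is legitimate precisely because $\partial\chi(y/\jap{x})$ is supported where $|y|\sim\jap{x}$, and the freedom to take $N\ge\max(0,l)$ is what makes the estimate valid uniformly in $l\in\R$.
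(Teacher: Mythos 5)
Your construction is correct: the cut-off Taylor polynomial $\chi(y/\jap{x})\sum_{k\le N}\frac{(\i y)^k}{k!}\varphi^{(k)}(x)$ with $N\ge\max(0,l)$ is the standard almost-analytic extension, the telescoping computation of $\partial_{\bar z}$ is right, and the two bounds (the remainder term via $|y|^{N}\le|y|^{l}(2\jap{x})^{N-l}$ and the cut-off term via the support condition $|y|\sim\jap{x}$ where $\chi'$ is active) do give the stated estimate, correctly read as an upper bound. The paper itself offers no proof of this proposition — it is quoted from \cite{DG} and \cite{Mo} — and your argument is essentially the same construction used in those references, so nothing further is needed.
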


\begin{theorem}[\cite{GoJ} and \cite{Mo}]
Let $k\in\N^*$ and $T$ a bounded operator in $C^k(B)$. Let $\rho<k$ and $\varphi\in\cs^\rho$. We have

\begin{equation}\label{eq: H-S formula}
[\varphi(B),T]=\sum^{k-1}_{j=1}\frac{1}{j!}\varphi^{(j)}(B)ad^j_B(T)+\frac{i}{2\pi}\int_\C \frac{\partial \varphi^\C}{\partial\bar{z}}(z-B)^{-k}ad^k_B(T)(z-B)^{-1} dz\wedge d\bar{z}
\end{equation}
\end{theorem}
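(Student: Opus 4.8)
The plan is to derive \eqref{eq: H-S formula} from the Helffer--Sj\"ostrand representation of $\varphi(B)$ combined with a finite resolvent expansion of the commutator. Write $R(z)=(z-B)^{-1}$. The starting point is the functional-calculus identity
\[
\varphi(B)=\frac{i}{2\pi}\int_\C \frac{\partial\varphi^\C}{\partial\bar z}(z)\,R(z)\,dz\wedge d\bar z ,
\]
$\varphi^\C$ being an almost-analytic extension as in the preceding Proposition (the overall sign is fixed once its normalisation is). First I would recall why this holds: the operator-valued integral converges absolutely in $\cb(\ch)$, using $\|R(z)\|\le|\Im z|^{-1}$ and the decay of $\partial_{\bar z}\varphi^\C$ near and far from the real axis furnished by the symbol estimates (a point I return to below), and pairing against the spectral measure of $B$ and invoking the Cauchy--Pompeiu formula identifies the integral with $\varphi(B)$.

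Next I would commute with $T$. As $T$ is bounded it passes through this norm-convergent integral, whence
\[
[\varphi(B),T]=\frac{i}{2\pi}\int_\C \frac{\partial\varphi^\C}{\partial\bar z}(z)\,[R(z),T]\,dz\wedge d\bar z ,
\]
and $[R(z),T]=R(z)[B,T]R(z)=-R(z)\,ad^1_B(T)\,R(z)$ (recall $ad^1_B(T)=[T,B]$) inserts $-R(z)\,ad^1_B(T)\,R(z)$ into the integral. The algebraic heart of the argument is the ``resolvent peeling'' identity
\[
R(z)\,ad^1_B(T)\,R(z)=\sum_{j=1}^{k-1}R(z)^{j+1}\,ad^j_B(T)+R(z)^{k}\,ad^k_B(T)\,R(z),
\]
which I would prove by induction on $k$ from the single step $ad^j_B(T)\,R(z)=R(z)\,ad^j_B(T)+R(z)\,ad^{j+1}_B(T)\,R(z)$. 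Since $T\in C^k(B)$, each $ad^j_B(T)$ with $j\le k$ extends to a bounded operator, so all these are genuine identities in $\cb(\ch)$ and the rearrangements are legitimate.

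It remains to integrate the first $k-1$ summands. For each $j$, writing $R(z)^{j+1}=\frac{(-1)^j}{j!}\,\partial_z^jR(z)$, integrating by parts $j$ times in the variable $z$ (Stokes' theorem on $\C\simeq\R^2$; the boundary contributions at infinity vanish by the symbol bounds on $\partial_{\bar z}\varphi^\C$), and using that $\partial_z^j\varphi^\C$ is an almost-analytic extension of $\varphi^{(j)}$ --- it restricts to $\varphi^{(j)}$ on $\R$, and $\partial_{\bar z}\partial_z^j\varphi^\C=\partial_z^j\partial_{\bar z}\varphi^\C$ still vanishes to infinite order in $|\Im z|$ --- one obtains
\[
\frac{i}{2\pi}\int_\C \frac{\partial\varphi^\C}{\partial\bar z}(z)\,R(z)^{j+1}\,dz\wedge d\bar z=\frac{1}{j!}\,\varphi^{(j)}(B)
\]
by reapplying the basic formula to $\varphi^{(j)}\in\cs^{\rho-j}$. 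Hence the first $k-1$ summands contribute $\sum_{j=1}^{k-1}\frac{1}{j!}\varphi^{(j)}(B)\,ad^j_B(T)$, and the last summand is verbatim the remainder integral in \eqref{eq: H-S formula}; tracking the overall sign (pinned down by the normalisation of $\varphi^\C$) then gives the claimed identity.

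The main difficulty is not the algebra but making the analysis fully rigorous when $\varphi$ is merely a symbol rather than compactly supported: one must verify absolute convergence, in operator norm, of every integral that appears --- including those with powers $R(z)^m$, where $\|R(z)^m\|\le|\Im z|^{-m}$ --- justify differentiation under the integral sign and the boundary-term-free integration by parts, and keep all sign conventions coherent with the chosen normalisation of $\varphi^\C$. When $\rho\ge1$ some of the operators $\varphi^{(j)}(B)$ are unbounded and the identity has to be read on the appropriate domain or between weighted spaces; in every application in the present paper $\rho<1$, so this complication does not arise. Since the statement is quoted from \cite{GoJ} and \cite{Mo}, one may also simply invoke it; the above is the standard derivation.
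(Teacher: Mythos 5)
The paper offers no proof of this statement: it is imported verbatim from \cite{GoJ} and \cite{Mo}, so there is no internal argument to compare yours against. Your derivation is the standard one, and its skeleton is sound: the Helffer--Sj\"ostrand representation of $\varphi(B)$, the identity $[R(z),T]=R(z)[B,T]R(z)$, the peeling identity obtained by iterating $ad^j_B(T)R(z)=R(z)ad^j_B(T)+R(z)ad^{j+1}_B(T)R(z)$ (legitimate since $T\in C^k(B)$ makes each $ad^j_B(T)$, $j\le k$, bounded), the formula $R(z)^{j+1}=\frac{(-1)^j}{j!}\partial_z^jR(z)$ with integration by parts, the fact that $\partial_z^j\varphi^\C$ is an almost-analytic extension of $\varphi^{(j)}\in\cs^{\rho-j}$, and absolute convergence of the remainder exactly because $\rho<k$ (take $l>k$ in the bound on $\partial_{\bar z}\varphi^\C$ and use $\|(z-B)^{-k}\|\le|\Im z|^{-k}$ on the support region $|\Im z|\le c_2\jap{\Re z}$). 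Your caveat about $\rho\ge1$ is also appropriate and harmless for this paper, where only $\rho<1$ is used.

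The one step that does not work as announced is your final sentence, that ``tracking the overall sign (pinned down by the normalisation of $\varphi^\C$) then gives the claimed identity.'' The normalisation of the representation (equivalently, the orientation convention in $dz\wedge d\bar z$) can only flip the remainder term: in the sum the same normalisation enters twice --- once when you represent $[\varphi(B),T]$ and once when you re-identify $\frac{i}{2\pi}\int\partial_{\bar z}\varphi^\C\,R(z)^{j+1}\,dz\wedge d\bar z$ with $\pm\frac{1}{j!}\varphi^{(j)}(B)$ --- so it cancels there, and the sign of the sum relative to the left-hand side is forced by the algebra once the convention $ad^1_B(T)=[T,B]$ of the appendix is fixed. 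Carrying your own computation to the end (with the Cauchy--Pompeiu normalisation $\varphi(B)=-\frac{i}{2\pi}\int_\C\partial_{\bar z}\varphi^\C(z)(z-B)^{-1}dz\wedge d\bar z$) yields $[\varphi(B),T]=-\sum_{j=1}^{k-1}\frac{1}{j!}\varphi^{(j)}(B)\,ad^j_B(T)+\frac{i}{2\pi}\int_\C\partial_{\bar z}\varphi^\C\,(z-B)^{-k}ad^k_B(T)(z-B)^{-1}dz\wedge d\bar z$, i.e.\ \eqref{eq: H-S formula} with a minus sign on the sum. A clean check with bounded $T$: take $B=q$ on $L^2(\R)$ and $T=e^{ip}$ (translation), so that $ad^j_B(T)=T$ for every $j$, while $[\varphi(q),T]=\bigl(\varphi(q)-\varphi(q+1)\bigr)T$, whose Taylor expansion is $-\sum_{j\ge1}\frac{1}{j!}\varphi^{(j)}(q)\,T$ plus a remainder --- the opposite sign to the printed sum. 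So either the quoted sources use the convention $ad_B(T)=[B,T]$ (or put $[T,\varphi(B)]$ on the left) and the appendix statement carries a convention mismatch, or you must record the sign you actually obtain; in any case this relative sign has to be derived, not absorbed into the normalisation of $\varphi^\C$.
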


In the general case, the rest of the previous expansion is difficult to calculate. So we will give an estimate of this rest.

\begin{proposition}[\cite{GoJ} and \cite{Mo}]\label{prop: estimate H-S}
Let $T\in C^k(A)$ be a self-adjoint and bounded operator. Let $\varphi\in\cs^\rho$ with $\rho<k$. Let 
\[I_k(\varphi)=\int_\C \frac{\partial \varphi^\C}{\partial\bar{z}}(z-B)^{-k}ad^k_B(T)(z-B)^{-1} dz\wedge d\bar{z}\]
 be the rest of the development of order $k$ in \eqref{eq: H-S formula}. Let $s,s'>0$ such that $s'<1$, $s<k$ and $\rho+s+s'<k$. Then $\langle B \rangle^s I_k(\varphi)\langle B\rangle^{s'}$ is bounded.
\end{proposition}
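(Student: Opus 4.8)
The plan is to interpret $\langle B\rangle^s I_k(\varphi)\langle B\rangle^{s'}$ as the bounded operator representing the sesquilinear form
\[
(\phi,\psi)\longmapsto\int_\C\frac{\partial\varphi^\C}{\partial\bar z}\,\Bigl(\phi,\bigl[\langle B\rangle^s(z-B)^{-k}\bigr]\,ad^k_B(T)\,\bigl[(z-B)^{-1}\langle B\rangle^{s'}\bigr]\psi\Bigr)\,dz\wedge d\bar z,
\]
which is legitimate because $\langle B\rangle^s$ and $\langle B\rangle^{s'}$, being functions of $B$, commute with the resolvents, and because $\langle B\rangle^s(z-B)^{-k}$ and $(z-B)^{-1}\langle B\rangle^{s'}$ are genuine bounded operators for $z\notin\R$ (using $s<k$ and $s'<1$ respectively). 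Thus everything reduces to bounding, for $z$ in the support of $\varphi^\C$, the quantity
\[
\Bigl|\frac{\partial\varphi^\C}{\partial\bar z}\Bigr|\cdot\|\langle B\rangle^s(z-B)^{-k}\|\cdot\|ad^k_B(T)\|\cdot\|(z-B)^{-1}\langle B\rangle^{s'}\|,
\]
and to checking that it is integrable over $\C$; here $\|ad^k_B(T)\|<\infty$ because $T\in C^k(B)$.

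The heart of the matter is the weighted resolvent estimate: writing $z=x+\i y$, the claim is that for every integer $m\ge1$ and every $0\le\sigma\le m$ there is a constant $C$ with
\[
\|\langle B\rangle^\sigma(z-B)^{-m}\|=\sup_{\lambda\in\R}\frac{\langle\lambda\rangle^\sigma}{|z-\lambda|^m}\le C\,\langle x\rangle^\sigma\,|y|^{-m}\qquad\text{whenever }0<|y|\le c_2\langle x\rangle.
\]
This would follow from the spectral theorem and an elementary two-case analysis on $|x-\lambda|$: when $|x-\lambda|\le\langle x\rangle$ one has $\langle\lambda\rangle\le2\langle x\rangle$ and $|z-\lambda|\ge|y|$, so the ratio is $\le(2\langle x\rangle)^\sigma|y|^{-m}$; when $|x-\lambda|>\langle x\rangle$ one has $\langle\lambda\rangle\le2|x-\lambda|$ and $|z-\lambda|\ge|x-\lambda|$, so the ratio is $\le2^\sigma|x-\lambda|^{\sigma-m}\le2^\sigma\langle x\rangle^{\sigma-m}$, and this is $\le C\langle x\rangle^\sigma|y|^{-m}$ precisely because $|y|\le c_2\langle x\rangle$. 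Applying this with $(\sigma,m)=(s,k)$ and with $(\sigma,m)=(s',1)$ — here is exactly where $s<k$ and $s'<1$ enter — the quantity to be integrated is bounded by $C\,\|ad^k_B(T)\|\,|\partial\varphi^\C/\partial\bar z|\,\langle x\rangle^{s+s'}|y|^{-k-1}$.

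To conclude I would invoke the almost-analytic extension with the estimate $|\partial\varphi^\C/\partial\bar z|\le c_1\langle x\rangle^{\rho-1-l}|y|^l$ for a fixed $l>k$, which turns the integrand into something $\le C\langle x\rangle^{\rho-1-l+s+s'}|y|^{l-k-1}$ supported in $\{|y|\le c_2\langle x\rangle\}$. Integrating first in $y$ (legitimate since $l-k-1>-1$) gives $\int_{|y|\le c_2\langle x\rangle}|y|^{l-k-1}\,dy\le C\langle x\rangle^{l-k}$, and the remaining integral $\int_\R\langle x\rangle^{\rho+s+s'-k-1}\,dx$ converges exactly because $\rho+s+s'<k$; this yields the asserted boundedness, with a bound controlled by the semi-norms of $\varphi$ and by $\|ad^k_B(T)\|$. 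The one genuinely technical point is the weighted resolvent estimate above, and in particular the use of the support condition $|y|\le c_2\langle x\rangle$ to absorb $\langle x\rangle^{\sigma-m}$ into $\langle x\rangle^\sigma|y|^{-m}$; a cruder bound would only give the result under the stronger hypothesis $\rho+2(s+s')<k$. This is, in any case, essentially the estimate established in \cite{GoJ} and \cite{Mo}.
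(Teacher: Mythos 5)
Your proof is correct: the weighted resolvent bound $\sup_{\lambda}\langle\lambda\rangle^{\sigma}|z-\lambda|^{-m}\leq C\langle x\rangle^{\sigma}|y|^{-m}$ on the support of $\varphi^{\C}$ (using $s<k$, $s'<1$ and the cone condition $|y|\leq c_2\langle x\rangle$), combined with the almost-analytic estimate for a fixed $l>k$, gives exactly the integrability condition $\rho+s+s'<k$. The paper itself states this proposition without proof, citing \cite{GoJ} and \cite{Mo}, and your argument is essentially the standard one found there, so there is nothing to object to.
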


In particular, if $\rho<0$, and if we choose $s'$ near $0$, we have $\langle B \rangle^s I_k(\varphi)\langle B\rangle^{s'}$ bounded, for all $s<k-s'-\rho$.

{\bf Acknowledgements.} I thank my doctoral supervisor, Thierry Jecko, for fruitfull discussions and comments.

\bibliographystyle{alpha}
\bibliography{bibliographie}

\end{document}